\DeclareMathOperator{\codim}{codim}
\DeclareMathOperator{\dist}{dist}
\DeclareMathOperator*{\esssup}{ess\,sup}
\DeclareMathOperator{\sgn}{sgn}
\newcommand{\I}{{\mathbb I}}
\newcommand{\N}{{\mathbb N}}
\newcommand{\R}{{\mathbb R}}
\newcommand{\Z}{{\mathbb Z}}
\newcommand{\Q}{{\mathbb Q}}
\newcommand{\cC}{{\mathcal C}}
\newcommand{\cS}{{\mathcal S}}
\newcommand{\cT}{{\mathcal T}}
\newcommand{\LC}{{{LC}}}
\newcommand{\SC}{{{SC}}}
\newcommand{\sB}{{\mathscr B}}
\newcommand{\sF}{{\mathscr F}}
\newcommand{\sG}{{\mathscr G}}
\newcommand{\sH}{{\mathscr H}}
\newcommand{\sK}{{\mathscr K}}
\newcommand{\fB}{{\mathfrak B}}
\newcommand{\fC}{{\mathfrak C}}
\renewcommand{\d}{\,{\mathrm d}}
\newcommand{\tm}{\times}
\newcommand{\eps}{\varepsilon}
\newcommand{\intoo}[1]{\left(#1\right)}		% open interval
\newcommand{\intcc}[1]{\left[#1\right]}		% compact interval
\newcommand{\set}[1]{\left\{#1\right\}}		% set
\newcommand{\abs}[1]{\left|#1\right|}		% absolute value
\newcommand{\norm}[1]{\left\|#1\right\|}		% norm
\newcommand{\fall}{\;\text{ for all }}		% for all
\newcommand{\on}{\;\text{ on }}		% on
\newtheorem{theorem}{Theorem}[section]
\newtheorem{lemma}[theorem]{Lemma}
\newtheorem{corollary}[theorem]{Corollary}
\newtheorem{proposition}[theorem]{Proposition}
\theoremstyle{definition}
\newtheorem{example}[theorem]{Example}
\newtheorem*{hypothesis*}{Hypothesis}
\theoremstyle{remark}
\newtheorem{remark}[theorem]{Remark}
\newcommand{\cref}[1]{Cor.~\ref{#1}}
\newcommand{\eref}[1]{Ex.~\ref{#1}}
\newcommand{\fref}[1]{Fig.~\ref{#1}}
\newcommand{\href}[1]{Hyp.~\ref{#1}}
\newcommand{\pref}[1]{Prop.~\ref{#1}}
\newcommand{\tref}[1]{Thm.~\ref{#1}}
\newcommand{\lref}[1]{Lemma~\ref{#1}}
\author[1]{Iacopo P.~Longo}
\ead{iacopo.longo@imperial.ac.uk}
\author[2]{Christian P\"otzsche}
\ead{christian.poetzsche@aau.at}
\author[3]{Robert Skiba}
\ead{robert.skiba@mat.umk.pl}
\affiliation[1]{organization={Imperial College London, Department of Mathematics, DynamIC}, addressline={635 Huxley Building, 180 Queen's Gate South Kensington Campus}, 
city={London, SW7 2AZ}, 
country={United Kingdom}
}
\affiliation[2]{organization={Department of Mathematics, University of Klagenfurt},
addressline={Universit\"atsstra{\ss}e 65--67},
city={9020 Klagenfurt},
country={Austria}}
\affiliation[3]{organization={Faculty of Mathematics and Computer Science, Nicolaus Copernicus University in Toru\'n},
addressline={ul.\ Chopina 12/18},
city={87-100 Toru{\'n}},
country={Poland}}
\title{Global bifurcation of homoclinic solutions}
\begin{document}
\begin{abstract}
	In the analysis of parametrized nonautonomous evolutionary equations, bounded entire solutions are natural candidates for bifurcating objects. Appropriate explicit and sufficient conditions for such branchings, however, require to combine contemporary functional analytical methods from the abstract bifurcation theory for Fredholm operators with tools originating in dynamical systems. 

	This paper establishes alternatives classifying the shape of global bifurcating branches of bounded entire solutions to Carath{\'e}odory differential equations. Our approach is based on the parity associated to a path of index $0$ Fredholm operators, the global Evans function as a recent tool in nonautonomous bifurcation theory and suitable topologies on spaces of Carath{\'e}odory functions.
\end{abstract}
\maketitle
\footnotetext[1]{MSC: primary 47H11; secondary 47J15, 34C23, 34C37}
\footnotetext[2]{Keywords: Nonautonomous bifurcation, parity, Fredholm operator, properness, Carath{\'e}odory differential equation, exponential dichotomy, compactness in $L^\infty$ and $W^{1,\infty}$}
\section{Introduction}
Bifurcation theory is a central area in both abstract Nonlinear Functional Analysis \cite{kielhoefer:12, zeidler:95}, as well as in the field of Dynamical Systems \cite{kuznetsov:04}. A particularly symbiotic relation between these respective subdisciplines is reached in the recent Nonautonomous Bifurcation Theory \cite{anagnostopoulou:poetzsche:rasmussen:22}, which describes qualitative changes in the behavior of dynamical systems stimulated by aperiodic temporal influences. In contrast to the classical autonomous theory \cite{kuznetsov:04}, such time-driven dynamical systems typically do not have constant solutions and thus the set of potentially bifurcating objects needs to be extended. Indeed, rather than equilibria, for instance bounded entire or homoclinic solutions turn out to be suitable candidates. Yet, their analysis is heavily based on functional analytical tools and techniques \cite{anagnostopoulou:poetzsche:rasmussen:22, poetzsche:12}. 

The nonautonomous dynamical systems studied in this paper are differential equations of Carath{\'e}odory type \cite{bressan:piccoli:07,kurzweil:86}
\begin{equation}
	\tag{$C_\lambda$}
	\dot x=f(t,x,\lambda)
	\label{cde}
\end{equation}
in $\R^d$ depending on a real parameter $\lambda$. Such well-motivated problems are intrinsically time-variant, but extending their special case of classical ordinary differential equations, require merely measurable dependence on the time variable. Carath{\'e}odory equations typically arise as pathwise realizations of random differential equations \cite{arnold:98}, but are also omnipresent in mathematical control theory \cite{bressan:piccoli:07}. 

Local bifurcations of bounded entire solutions to \eqref{cde} were first studied in \cite{poetzsche:12}. For this endeavor we characterized these solutions of \eqref{cde} as zeros of a nonlinear differentiable operator $G$ depending on $\lambda$, i.e.\ by means of an abstract equation
\begin{equation}
	\tag{$O_\lambda$}
	G(x,\lambda)=0
	\label{abs}
\end{equation}
between ambient spaces of (essentially) bounded functions. The assumptions in \cite{poetzsche:12} were purely local in a bounded entire solution $\phi^\ast$ to $(C_{\lambda^\ast})$ for some critical parameter value $\lambda^\ast\in\Lambda$. They led to a detailed description of the bifurcation diagram for \eqref{cde} in the vicinity of a pair $(\phi^\ast,\lambda^\ast)$ in terms of a fold resp.\ a transversal intersection of two smooth branches of bounded entire solutions to \eqref{cde}. These results were a consequence of local bifurcation criteria applied to the abstract problem \eqref{abs} under appropriate conditions on \eqref{cde} guaranteeing that the linearization $D_1G(\phi^\ast,\lambda^\ast)$ is a Fredholm operator of index $0$ with $1$-dimensional kernel plus further conditions on the higher order partial derivatives of $G$ in $(\phi^\ast,\lambda^\ast)$. 

Our present alternative to the rather specific setting of \cite{poetzsche:12} provides the global assumption that a critical pair $(\phi^\ast,\lambda^\ast)$ is embedded into a prescribed continuous branch of bounded entire solutions $\phi_\lambda$ to \eqref{cde} such that the Fr{\'e}chet derivatives $D_1G(\phi_\lambda,\lambda)$ are Fredholm of index $0$ for parameters $\lambda$, but $D_1G(\phi^\ast,\lambda^\ast)$ having arbitrary kernel dimension. On the one hand, this is the framework to employ a topological invariant known as the \emph{parity} to \eqref{abs}, which was developed in \cite{fitzPeja86, FiPejsachowiczV, FitPejRab, fitzpatrick:etal:92, pejsachowicz:rabier:98} and received an axiomatization in \cite{lopez:sampedro:22}. On the other hand, an actual computation of the parity is involved and problem specific. In order to address this issue, we recently represented the parity of operators $G$ arising in the framework of Carath{\'e}odory equations \eqref{cde} using a concept similar to the \emph{Evans function}. In the stability theory of traveling waves to evolutionary PDEs this complex-analytical function is an established tool to study the (point) spectrum of their linearizations along traveling waves (cf.~\cite{kapitula:promislow:13, latushkin:pogan:15, sandstede:02}). Contrasting this, for our purposes an Evans function $E$ is real-valued and only needs to be continuous. Sign changes of $E$, however, turn out to be sufficient for local bifurcations of bounded entire solutions to \eqref{cde} (see~\cite[Thm.~4.2]{poetzsche:skiba:24}). Note also that in comparison to \cite{poetzsche:12} it suffices to assume that the partial derivative $(x,\lambda)\mapsto D_1G(x,\lambda)$ exists as continuous function.

The paper at hand now aims to provide information on the global structure of continua of bounded solutions branching off from $(\phi^\ast,\lambda^\ast)$. This requires to establish a global variant of the Evans function from \cite{poetzsche:skiba:24}. Then akin to the classical Rabinowitz alternative \cite{rabinowitz:71}, we obtain that a bifurcating continuum of bounded solutions to \eqref{cde} either returns to a prescribed branch, or it fails to be compact. Under further assumptions on the Carath{\'e}odory equation \eqref{cde} (and the prescribed solution branch) one can even establish that the bifurcating continuum is unbounded. We achieve this using abstract global bifurcation criteria. However, while the celebrated and widely used Rabinowitz alternative \cite{rabinowitz:71} relies on the Leray--Schauder degree, see e.g.~\cite[pp.~199ff]{kielhoefer:12}, our setting requires contemporary results from \cite{fitzpatrick:etal:92, galdi:rabier:99, pejsachowicz:rabier:98} using the more general and flexible parity developed in \cite{fitzPeja86, FiPejsachowiczV, FitPejRab}. A corresponding application to Carath{\'e}odory equations \eqref{cde} is essentially based on two key ingredients: 
\begin{itemize}
	\item There is a close relation between the partial derivatives $D_1G(x,\lambda)$ being Fredholm and the fact that the variational equations of \eqref{cde} along the bounded solutions $\phi_\lambda$ possess exponential dichotomies \cite{coppel:78} on both halflines. For linear ordinary differential equations this was established in \cite{palmer:84,palmer:88}, while the minor modifications necessary for Carath{\'e}odory equations are due to \cite{poetzsche:12,poetzsche:skiba:24}. Beyond this, the existence of appropriate exponential dichotomies allows to construct a global Evans function, whose sign changes yield local bifurcations (cf.~\cite{poetzsche:skiba:24}). 

	\item In order to obtain more detailed information on the global structure of the bifurcating continua using \cite{fitzpatrick:etal:92, galdi:rabier:99, pejsachowicz:rabier:98}, properness assumptions on $G$ are due. On the one hand, they are based on suitable compactness criteria in the spaces of essentially bounded and bounded weakly differentiable functions. On the other hand, there is a crucial connection between the topological notion of properness for $G$ and Topological Dynamics \cite{sell:71}. On subspaces of the bounded continuous functions this was first observed in \cite{rabier:04}, whereas our extensions to essentially bounded functions require nontrivial methods developed in \cite{thesis:Longo}. 
\end{itemize}
We eventually point out that our analysis reveals that the continua bifurcating from the pair $(\phi^\ast,\lambda^\ast)$ have a rather special structure. They consist of bounded solutions to \eqref{cde} being in fact perturbations of the prescribed branch. In fact they converge to a solution $\phi_\lambda$ in both time directions --- one speaks of solutions being \emph{homoclinic} to $\phi_\lambda$. 

The paper is structured as follows: Sect.~\ref{sec2} introduces a general class of parametrized Cara\-th{\'e}o\-dory equations \eqref{cde} and a continuously Fr{\'e}chet differentiable operator $G$ enabling us to characterize the bounded solutions of \eqref{cde} as zeros of $G$. Properness of the operator $G$ is derived in Sect.~\ref{sec3} and requires two preparations: First, we characterize the compactness of subsets of the spaces $L_0^\infty(\R)$ and $W_0^{1,\infty}(\R)$ consisting of essentially bounded functions vanishing at $\pm\infty$. Second, we investigate the Bebutov flow over the hull of \eqref{cde}, which necessitates fairly novel results ensuring compactness of the set of time-translated Carath{\'e}odory functions $f(\cdot,\lambda)$ (see \cite{thesis:Longo}). Then in terms of appropriate one-sided exponential dichotomies for the variational equation associated to \eqref{cde}, our Sect.~\ref{sec4} establishes that the globally defined $G$ is a nonlinear Fredholm operator (of index $0$) and that the Evans function can be extended continuously to the entire parameter interval. After these preparations, our main results are obtained in Sect.~\ref{sec5}. Based on bifurcation points identified via sign changes in global Evans functions, \tref{thmbif} provides general alternatives for the global structure of the bifurcating continua. They consist of bounded entire solutions to \eqref{cde} being asymptotically equivalent to the members $\phi_\lambda$ of the prescribed branch. Appropriate admissibility assumptions (formulated in terms of the Bebutov flow) allow to deduce even unboundedness of the bifurcating continua in \tref{thmglobal2CDE}. For bounded continua, however, we obtain that the oriented sum over the sign changes of a global Evans function is always zeri (cf.~\tref{thmsignCDE}). This corresponds to a vanishing \emph{bifurcation index} used in the specifications \cite[p.~342, (1.9), p.~344, (1.10)]{granas:dugundji:03} of the classical alternative from \cite{rabinowitz:71} or \cite[p.~205, Thm.~II.3.3]{kielhoefer:12} when the Leray--Schauder degree applies. On this basis, sufficient conditions for bifurcating branches to be unbounded result in \cref{corsignCDE}. A simple Carath{\'e}odory equation illustrates the feasibility of these alternatives in the final Sect.~\ref{sec6}. For the convenience of the reader, three appendices provide the required basics on Topological Dynamics, present sufficient conditions for properness of parametrized operators $G$ and finally derive the fundamental bifurcation results Thms.~\ref{thmglobal}, \ref{thmglobal2} and \ref{thmsign} from predecessors in \cite{fitzpatrick:etal:92, galdi:rabier:99, pejsachowicz:rabier:98} and \cite{lopez:mora:04,lopez:sampedro:22,lopez:sampedro:24} based on the parity. 
\paragraph{Notation}
We write $\R_+:=[0,\infty)$ and $\R_-:=(-\infty,0]$ for the halflines of the reals $\R$. 

Let $X$ be a Banach space with norm $\norm{\cdot}$ or $\norm{\cdot}_X$. The \emph{distance} of a point $x\in X$ to a subset $A\subseteq X$ is abbreviated as $\dist_A(x):=\inf_{a\in A}\norm{x-a}$; we write $\partial A$, $A^\circ$ and $\overline{A}$ for its boundary, interior resp.\ its closure. Norms on $\R^d$ (and further finite-dimensional spaces) will be denoted by $\abs{\cdot}$. 

Given a further Banach space $Y$, we write $L(X,Y)$ for the Banach space of bounded linear operators $T:X\to Y$ having the bounded invertible operators $GL(X,Y)$ and the index $0$ Fredholm operators $F_0(X,Y)$ as open subsets in the operator topology. It is convenient to abbreviate $L(X):=L(X,X)$ and $GL(X):=GL(X,X)$. Furthermore, $R(T):=TX\subseteq Y$ denotes the range and $N(T):=T^{-1}(0)\subseteq X$ the kernel of $T$. 
\paragraph{Function spaces}
Our functional analytical approach requires a suitable spatial setting of functions defined on intervals $I\subseteq\R$ with values in nonempty, open subsets $\Omega\subseteq\R^d$. We write $BC(I,\Omega)$ for the set of bounded continuous functions $x:I\to\Omega$. It is convenient to abbreviate $BC(I):=BC(I,\R^d)$ and we proceed similarly with further function spaces. When equipped with the norm $\norm{x}_\infty:=\sup_{t\in I}\abs{x(t)}$ this becomes a real Banach space. On $I=\R$ we introduce the subspace $C_0(\R)$ of continuous functions satisfying $\lim_{t\to\pm\infty}x(t)=0$, as well as the subspace $C_0^1(\R)$ of continuously differentiable functions $x\in C_0(\R)$ such that $\dot x\in C_0(\R)$ holds, which is equipped with the norm $\norm{x}_{1,\infty}:=\max\set{\norm{x}_\infty,\norm{\dot x}_\infty}$. 

Throughout, measurability and integrability are understood in the Lebesgue sense. We write $L_{loc}^1(I)$ for the $\R^d$-valued functions being integrable on each compact subset of $I$. Moreover, $L^\infty(I,\Omega)$ denotes the set of essentially bounded functions $x:I\to\Omega$ and $W^{1,\infty}(I,\Omega)$ is the set of such $L^\infty$-functions with essentially bounded (weak) derivatives. We note that $L^\infty(I)$ is a real Banach space with the norm
$
	\norm{x}_\infty:=\esssup_{t\in I}\abs{x(t)}.
$

Every $x\in W^{1,\infty}(I)$ has a bounded Lipschitz continuous representative (cf.\ \cite[p.~224, Thm.~7.17]{leoni:09}). First, Rademacher's theorem \cite[p.~343, Thm.~11.49]{leoni:09} ensures that the (strong) derivative $\dot x:I\to\R^d$ exists a.e.\ in $I\subseteq\R$. Second, this implies that $x$ is absolutely continuous and the Fundamental Theorem of Calculus \cite[p.~85, Thm.~3.30]{leoni:09} applies. From \cite[p.~224, Ex.~7.18]{leoni:09} we see that $W^{1,\infty}(I)$ is a real Banach space with norm
$
	\norm{x}_{1,\infty}:=\max\set{\norm{x}_\infty,\norm{\dot x}_\infty}.
$
Clearly, $W^{1,\infty}(I)\subseteq L^\infty(I)$ is a continuous embedding. On $I=\R$ we furthermore introduce the closed subspaces\footnote{the limit is to be understood in the essential sense, i.e.\ for each $\eps>0$ there exists a real $T_\eps>0$ such that $\abs{x(t)}<\eps$ for a.a.~$t\in\R\setminus(-T_\eps,T_\eps)$.}
\begin{align*}
	L_0^\infty(\R)&:=\bigl\{x\in L^\infty(\R):\,\lim_{t\to\pm\infty}x(t)=0\bigr\},\\
	W_0^{1,\infty}(\R)&:=\bigl\{x\in W^{1,\infty}(\R):\,x,\dot x\in L_0^\infty(\R)\bigr\}
\end{align*}
and arrive at the continuous embeddings
\begin{equation}
	\begin{array}{ccccc}
		C_0(\R) & \hookrightarrow & L_0^\infty(\R) & \hookrightarrow & L^\infty(\R)\\
        \rotatebox[origin=c]{90}{$\hookrightarrow$} & & \rotatebox[origin=c]{90}{$\hookrightarrow$} & & \rotatebox[origin=c]{90}{$\hookrightarrow$}\\
        C_0^1(\R) & \hookrightarrow & W_0^{1,\infty}(\R) & \hookrightarrow & W^{1,\infty}(\R).
    \end{array}
    \label{noembed}
\end{equation}
\section{Carath{\'e}odory equations}
\label{sec2}
We are interested in Carath{\'e}odory equations
\begin{equation}
	\tag{$C_\lambda$}
	\dot x=f(t,x,\lambda)
\end{equation}
depending on a parameter $\lambda\in\Lambda$ from an open interval $\Lambda\subseteq\R$ under the assumptions: 
\begin{hypothesis*}[$\mathbf{H_0}$] Let $\Omega\subseteq\R^d$ be nonempty, open and convex. The right-hand side $f:\R\tm\Omega\tm\Lambda\to\R^d$ of \eqref{cde} is a Carath{\'e}odory function having the properties:
\begin{itemize}
    \item for each parameter $\lambda\in\Lambda$ the function $f(\cdot,\lambda):\R\tm\Omega\to\R^d$ is measurable and $f(t,\cdot,\lambda):\Omega\to\R^d$ is differentiable for a.a.~$t\in\R$ with measurable partial derivative $D_2f(t,\cdot):\Omega\tm\Lambda\to\R^{d\tm d}$, 

    \item for each parameter $\lambda\in\Lambda$ and compact $K\subset\Omega$ there exists a real $m_K(\lambda)\geq 0$ such that for a.a.~$t\in\R$,
\begin{equation}
	\abs{D_2^jf(t,x,\lambda)}\leq m_K(\lambda)\fall x\in K\text{ and }j=0,1, 
	\label{h0b}
\end{equation}

    \item for all $\eps>0$ there is a $\delta>0$ such that for a.a.\ $t\in\R$, 
\begin{equation}
	\abs{x-\bar x}<\delta,\,
	\abs{\lambda-\bar\lambda}<\delta
	\quad\Rightarrow\quad
	\abs{D^j_2f(t,x,\lambda)-D^j_2f(t,\bar x,\bar\lambda)}<\eps
	\label{h0c}
\end{equation}
for all $x,\bar x\in\Omega$, parameters $\lambda,\bar\lambda\in\Lambda$ and $j=0,1$. 
\end{itemize}
\end{hypothesis*}

\begin{remark}\label{rmk:Carat-assump} 
    Keeping $\lambda\in\Lambda$ fixed, in the jargon of Carath\'eodory functions the conditions required in Hypothesis~$(H_0)$ are equivalent to asking that the right-hand side $f(\cdot,\lambda)$ is \emph{Lipschitz Carath\'eodory}, in short $f(\cdot,\lambda)\in\LC$, and differentiable in $x$ for a.a.~$t\in\R$ with partial derivative $D_2f$ continuous in $x$ for a.a.~$t\in\R$, i.e.~$D_2f(\cdot,\lambda)$ is \emph{Strong Carath\'eodory}, in short $D_2f(\cdot,\lambda)\in \SC$, and both $f(\cdot,\lambda)$ and $D_2f(\cdot,\lambda)$ are locally essentially bounded. We refer the interested reader to App.~\ref{appA} for further details.
\end{remark}

For fixed $\lambda\in\Lambda$, a \emph{solution} to a Carath{\'e}odory equation \eqref{cde} is a continuous function $\phi:I\to\Omega$ on an open interval $I\subseteq\R$ satisfying the Volterra integral equation (cf.~\cite{bressan:piccoli:07})
$$
    \phi(t)=\phi(\tau)+\int_\tau^tf(s,\phi(s),\lambda)\d s\fall\tau,t\in I.
$$
This implies that $\phi$ is absolutely continuous on every bounded subinterval $J\subseteq I$ and therefore strongly differentiable a.e.\ in $I$. In case $I=\R$, one speaks of an \emph{entire solution}. Note that due to Hypothesis~$(H_0)$ the integral above is well-defined because $s\mapsto f(s,\phi(s),\lambda)$ is locally integrable, and the existence and uniqueness of maximal solutions for Carath\'eodory initial value problems is guaranteed (see e.g.\ \cite{bressan:piccoli:07, kurzweil:86}).

\begin{hypothesis*}[$\mathbf{H_1}$] The Carath{\'e}odory equation \eqref{cde} has a continuous branch $(\phi_\lambda)_{\lambda\in\Lambda}$ of bounded entire solutions, i.e.\ each $\phi_\lambda:\R\to\Omega$, $\lambda\in\Lambda$, is a bounded entire solution to \eqref{cde} and for every $\eps>0$, $\lambda_0\in\Lambda$ there exists a $\delta>0$ such that
	\begin{equation*}
		\abs{\lambda-\lambda_0}<\delta
		\quad\Rightarrow\quad
		\sup_{t\in\R}\abs{\phi_\lambda(t)-\phi_{\lambda_0}(t)}<\eps\fall\lambda\in\Lambda.
	\end{equation*}
	Moreover, the branch $(\phi_\lambda)_{\lambda\in\Lambda}$ is \emph{permanent}, i.e.\
	\begin{equation}
		\inf_{\lambda\in\Lambda}\inf_{t\in\R}\dist_{\partial\Omega}(\phi_\lambda(t))>0.
		\label{nop}
	\end{equation}
\end{hypothesis*}
Given $\lambda\in\Lambda$, an entire solution $\phi:\R\to\Omega$ of \eqref{cde} is said to be \emph{homoclinic to} $\phi_\lambda$, if the limit relation $\lim_{t\to\pm\infty}\abs{\phi(t)-\phi_\lambda(t)}=0$ holds, but $\phi\neq\phi_\lambda$. 

Our next aim is to embed parametrized Carath{\'e}odory equations \eqref{cde} into a suitable functional analytical framework. Thereto, we characterize entire solutions of \eqref{cde} being homoclinic to $\phi_\lambda$ as zeros 
\begin{equation}
	\tag{$O_\lambda$}
	G(x,\lambda)=0
\end{equation}
of formally defined abstract nonlinear operators
\begin{equation}
	[G(x,\lambda)](t)
	:=
	\dot x(t)-f(t,x(t)+\phi_\lambda(t),\lambda)+f(t,\phi_\lambda(t),\lambda). 
	\label{gdef}
\end{equation}

\begin{lemma}[superposition operator]\label{lemfprop}
	If $(H_0$--$H_1)$ hold, then the \emph{superposition operator} $F:U_0\to L_0^\infty(\R)$ defined as
	$$
		[F(x,\lambda)](t):=f(t,x(t)+\phi_\lambda(t),\lambda)-f(t,\phi_\lambda(t),\lambda)\quad\text{for a.a.}~t\in\R
	$$
	is well-defined and continuous on the open set \[U_0:=\set{(x,\lambda)\in L_0^\infty(\R)\tm\Lambda:\,x(t)+\phi_\lambda(t)\in\Omega\text{ a.e.\ in }\R}.\]
 Moreover, its partial derivative 
    \begin{align*}
        D_1F:U_0&\to L(L_0^\infty(\R)),&
        [D_1F(x,\lambda)h](t)&=D_2f(t,x(t)+\phi_\lambda(t),\lambda)h(t)
    \end{align*}
    exists as a continuous function. 
\end{lemma}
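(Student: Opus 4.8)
The plan is to verify the three assertions --- well-definedness, continuity, and existence-plus-continuity of $D_1F$ --- in turn, reducing everything to the uniform estimates \eqref{h0b} and the uniform continuity \eqref{h0c} supplied by Hypothesis~$(H_0)$, combined with the permanence \eqref{nop} from Hypothesis~$(H_1)$.

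\emph{Well-definedness.} Fix $(x,\lambda)\in U_0$. First I would observe that \eqref{nop} yields a $\rho>0$ with $\dist_{\partial\Omega}(\phi_\lambda(t))\geq\rho$ for all $t\in\R$ and all $\lambda$, and that $x\in L_0^\infty(\R)$ together with the membership $(x,\lambda)\in U_0$ confines the arguments $x(t)+\phi_\lambda(t)$ and $\phi_\lambda(t)$ to a fixed compact convex set $K\subset\Omega$ (the $\tfrac{\rho}{2}$-neighbourhood of the compact set $\overline{\set{\phi_\lambda(t):t\in\R}}$ intersected appropriately, using boundedness of $\phi_\lambda$ and of $x$). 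On $K$ the bound \eqref{h0b} with $j=0,1$ applies, so by convexity of $\Omega$ and the mean value inequality
\[
	\abs{[F(x,\lambda)](t)}=\abs{f(t,x(t)+\phi_\lambda(t),\lambda)-f(t,\phi_\lambda(t),\lambda)}\leq m_K(\lambda)\abs{x(t)}\quad\text{a.e.};
\]
measurability of $t\mapsto[F(x,\lambda)](t)$ follows since $f(\cdot,\lambda)$ is Carath\'eodory and $t\mapsto x(t)+\phi_\lambda(t)$ is measurable. Hence $F(x,\lambda)\in L^\infty(\R)$, and because $\abs{x(t)}\to 0$ essentially as $t\to\pm\infty$, the displayed estimate forces $F(x,\lambda)\in L_0^\infty(\R)$. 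This also shows $U_0$ is open: a small $L^\infty$-perturbation of $x$ and of $\lambda$ (using continuity of the branch from $(H_1)$) keeps the arguments inside $K\subset\Omega$.

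\emph{Continuity and the derivative.} For continuity of $F$ at $(x_0,\lambda_0)$, I would fix $\eps>0$, pick $\delta>0$ from \eqref{h0c} for this $\eps$, and then for $(x,\lambda)$ with $\norm{x-x_0}_\infty$ and $\abs{\lambda-\lambda_0}$ small (again invoking the branch continuity of $(H_1)$ to control $\phi_\lambda-\phi_{\lambda_0}$) estimate $\abs{[F(x,\lambda)](t)-[F(x_0,\lambda_0)](t)}$ by splitting into the difference in the first slot and the difference in the $\phi_\lambda$-slot, each controlled by \eqref{h0c}; taking the essential supremum gives $\norm{F(x,\lambda)-F(x_0,\lambda_0)}_\infty\leq C\eps$. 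For $D_1F$, the candidate $[D_1F(x,\lambda)h](t)=D_2f(t,x(t)+\phi_\lambda(t),\lambda)h(t)$ is a bounded operator on $L_0^\infty(\R)$ (bound $m_K(\lambda)$ via \eqref{h0b} with $j=1$, and the product stays in $L_0^\infty$ because $h$ does). To show it is the Fr\'echet derivative I would write, for a.e.\ $t$, the integral remainder
\[
	[F(x+h,\lambda)](t)-[F(x,\lambda)](t)-[D_1F(x,\lambda)h](t)=\int_0^1\bigl(D_2f(t,x(t)+sh(t)+\phi_\lambda(t),\lambda)-D_2f(t,x(t)+\phi_\lambda(t),\lambda)\bigr)h(t)\d s,
\]
valid by absolute continuity of $s\mapsto f(t,x(t)+sh(t)+\phi_\lambda(t),\lambda)$ (differentiability in $x$ a.e., with $D_2f\in\SC$), and bound the integrand using \eqref{h0c}: given $\eps>0$ and the corresponding $\delta$, whenever $\norm{h}_\infty<\delta$ the integrand is $\leq\eps\abs{h(t)}$ uniformly in $s$ and (essentially) in $t$, so the remainder is $o(\norm{h}_\infty)$ in the $L_0^\infty$-norm. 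Continuity of $(x,\lambda)\mapsto D_1F(x,\lambda)$ in the operator norm follows once more directly from \eqref{h0c}: $\norm{D_1F(x,\lambda)-D_1F(x_0,\lambda_0)}_{L(L_0^\infty)}\leq\esssup_t\abs{D_2f(t,x(t)+\phi_\lambda(t),\lambda)-D_2f(t,x_0(t)+\phi_{\lambda_0}(t),\lambda_0)}$, which is $<\eps$ for $(x,\lambda)$ near $(x_0,\lambda_0)$.

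\emph{Main obstacle.} The routine analytic estimates are straightforward; the genuine point requiring care is the \emph{uniformity in $t$}. The estimates \eqref{h0b}--\eqref{h0c} hold for a.a.\ $t$ with constants and $\delta$'s independent of $t$ --- this is precisely what Hypothesis~$(H_0)$ is engineered to deliver --- but one must combine them with \eqref{nop} to guarantee that a \emph{single} compact $K\subset\Omega$ captures all the arguments $x(t)+sh(t)+\phi_\lambda(t)$ for a whole neighbourhood of $(x,\lambda)$ in $L_0^\infty(\R)\tm\Lambda$, so that one fixed constant $m_K$ and one fixed $\delta$ work simultaneously. That bookkeeping, together with the measurability check (legitimizing the a.e.-defined superposition as an element of $L_0^\infty(\R)$ rather than merely a pointwise object), is the heart of the argument; the rest is the dominated-by-$\eps\abs{h(t)}$ remainder computation above.
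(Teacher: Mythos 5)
Your proposal is correct and follows essentially the same route as the paper's proof: confinement of all arguments to one compact $K\subset\Omega$ via \eqref{nop} and boundedness, the mean-value/integral-remainder estimate with \eqref{h0b} for well-definedness and with \eqref{h0c} for the $o(\norm{h}_\infty)$ remainder, and \eqref{h0c} again (combined with branch continuity from $(H_1)$) for continuity of $F$ and of $D_1F$. The only cosmetic difference is that you make the measurability check explicit, which the paper leaves implicit.
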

\begin{proof}
	Let $(x_0,\lambda_0)\in U_0$. We initially indicate that $U_0$ is open. First, \eqref{nop} guarantees that there is a $\rho_0>0$ such that $x(t)+\phi_\lambda(t)\in\Omega$ a.e.\ in $\R$ holds for each $L_0^\infty$-function $x\in B_{\rho_0}(x_0)$. Second, since $\Lambda$ is open, there is a $B_{\rho_1}(\lambda_0)\subseteq\Lambda$ and we conclude $B_\rho(x_0,\lambda_0)\subseteq U_0$ with $\rho:=\min\set{\rho_0,\rho_1}$, i.e.\ $U_0$ is open. 
	
	Since $x_0$ is essentially bounded and $\phi_{\lambda_0}$ is bounded, and $\Omega$ is convex, there exists a compact $K\subseteq\Omega$ with $x_0(t),\phi_{\lambda_0}(t)+\theta x_0(t)\in K$ for a.a.\ $t\in\R$ and $\theta\in[0,1]$. It saves space to abbreviate $y_0:=x_0+\phi_{\lambda_0}$. 

	(I) By assumption \eqref{h0b} there is a $m_K\geq 0$ with $\abs{D_2f(t,\phi_{\lambda_0}(t)+\theta x_0(t),\lambda_0)}\leq m_K$ for a.a.\ $t\in\R$. With the Mean Value Theorem \cite[p.~243, Thm.~4.C for $n=1$]{zeidler:95} results
	\begin{align*}
        &
		\abs{f(t,y_0(t),\lambda_0)-f(t,\phi_{\lambda_0}(t),\lambda_0)}\\
		&\leq
        \int_0^1\abs{D_2f(t,\phi_{\lambda_0}(t)+\theta x_0(t),\lambda_0)}\d\theta\abs{x_0(t)}
        \stackrel{\eqref{h0b}}{\leq}
        m_K\abs{x_0(t)}
		\quad\text{for a.a.~}t\in\R
	\end{align*}
	and therefore $\lim_{t\to\pm\infty}x_0(t)=0$ implies the limit relation
    $$
        \lim_{t\to\pm\infty}\abs{f(t,y_0(t),\lambda_0)-f(t,\phi_{\lambda_0}(t),\lambda_0)}=0,
    $$
    i.e.\ $F(x_0,\lambda_0)\in L_0^\infty(\R)$ and $F$ is well-defined. 

	(II) Given $\eps>0$, according to $(H_0)$ there exists a $\delta_0>0$ so that for any $x\in B_{\delta_0}(x_0)$, $\lambda\in B_{\delta_0}(\lambda_0)$ one has
    $\abs{f(t,x,\lambda)-f(t,x_0,\lambda_0)}<\tfrac{\eps}{3}$ for a.a.~$t\in\R$, while $(H_1)$ yields the existence of a $\delta_1>0$ such that $\lambda\in B_{\delta_1}(\lambda_0)$ implies $\abs{\phi_\lambda(t)-\phi_{\lambda_0}(t)}<\tfrac{\delta_0}{3}$ for all $t\in\R$ and for $x\in B_{\delta_0/3}(x_0)$ one obtains
    $$
        \abs{x(t)+\phi_\lambda(t)-y_0(t)}
        \leq
        \abs{x(t)-x_0(t)}+\abs{\phi_\lambda(t)-\phi_{\lambda_0}(t)}
        <
        \delta_0
    $$
    for a.a.\ $t\in\R$. In conclusion, this implies 
	\begin{align*}
        &
		\abs{f(t,x(t)+\phi_\lambda(t),\lambda)-f(t,\phi_\lambda(t),\lambda)-f(t,y_0(t),\lambda_0)+f(t,\phi_{\lambda_0}(t),\lambda_0)}\\
        &\leq
        \abs{f(t,x(t)+\phi_\lambda(t),\lambda)-
        f(t,y_0(t),\lambda_0)}+\abs{f(t,\phi_\lambda(t),\lambda)-f(t,\phi_{\lambda_0}(t),\lambda_0)}
        <
        \tfrac{2}{3}\eps
	\end{align*}
	for a.a.~$t\in\R$ and passing to the essential supremum over $t\in\R$ yields the estimate $\norm{F(x,\lambda)-F(x_0,\lambda_0)}_\infty<\eps$ for all $x\in B_\delta(x_0)$, $\lambda\in B_\delta(\lambda_0)$ with $\delta:=\min\set{\delta_0,\delta_1}$. This establishes the continuity of $F$ in an arbitrary pair $(x_0,\lambda_0)$. 

	(III) We pointwise define the linear multiplication operator
	$$
		[Mh](t):=D_2f(t,y_0(t),\lambda_0)h(t)
		\quad\text{for a.a.~}t\in\R
	$$
	and $h\in L_0^\infty(\R)$. Passing to the essential supremum in the estimate
	$$
		\abs{D_2f(t,y_0(t),\lambda_0)h(t)}
		\stackrel{\eqref{h0b}}{\leq}
		m_K\abs{h(t)}
		\leq
		m_K\norm{h}_\infty
	$$
	for a.a.~$t\in\R$ leads to $\norm{Mh}_\infty\leq m_K\norm{h}_\infty$ and $M:L_0^\infty(\R)\to L_0^\infty(\R)$ is a bounded linear operator. Since the Mean Value Theorem \cite[p.~243, Thm.~4.C for $n=1$]{zeidler:95} implies
	\begin{align*}
		&
		|f(t,x_0(t)+h(t)+\phi_{\lambda_0}(t),\lambda_0)-
  f(t,\phi_{\lambda_0}(t),\lambda_0)\\
        &
        \quad
  -f(t,y_0(t),\lambda_0)+f(t,\phi_{\lambda_0}(t),\lambda_0)-D_2f(t,y_0(t),\lambda_0)h(t)|\\
		&=
		|f(t,x_0(t)+h(t)+\phi_{\lambda_0}(t),\lambda_0)-f(t,y_0(t),\lambda_0)\\
        &
        \quad-D_2f(t,y_0(t),\lambda_0)h(t)|\\
		&\leq
		\int_0^1\abs{D_2f(t,y_0(t)+\theta h(t),\lambda_0)-D_2f(t,y_0(t),\lambda_0)}\d\theta\abs{h(t)}
		\leq
		r(h)\norm{h}_\infty
	\end{align*}
	for a.a.~$t\in\R$ with the real-valued remainder function
	$$
		r(h):=\esssup_{t\in\R}\sup_{\theta\in[0,1]}
		\abs{D_2f(t,y_0(t)+\theta h(t),\lambda_0)-D_2f(t,y_0(t),\lambda_0)},
	$$
	yields 
	$
		\norm{F(x_0+h,\lambda_0)-F(x_0,\lambda_0)-Mh}_\infty
		\leq
		r(h)\norm{h}_\infty.
	$
	On the one hand, the uniform continuity assumption \eqref{h0c} implies $\lim_{h\to0}r(h)=0$ and therefore $F(\cdot,\lambda_0)$ is differentiable in $x_0$ with the derivative $D_1F(x_0,\lambda_0)h=Mh$. On the other hand, it follows as in step (II) using \eqref{h0c} that $D_1F:U_0\to L(L_0^\infty(\R))$ is a continuous function. 
\end{proof}

\begin{theorem}[properties of $G$]\label{thmgprop}
	If $(H_0$--$H_1)$ hold, then the operator
	$
		G:U\to L_0^\infty(\R)
	$
	given by \eqref{gdef} has the following properties:
	\begin{enumerate}[(a)]
		\item Its domain $U:=\{(x,\lambda)\in W_0^{1,\infty}(\R)\tm\Lambda:\,x(t)+\phi_\lambda(t)\in\Omega\text{ for all }t\in\R\}$ is nonempty, open and simply connected, 

		\item $G$ is well-defined and continuous with $G(0,\lambda)\equiv 0$ on $\Lambda$, 

		\item the partial derivative $D_1G:U\to L(W_0^{1,\infty}(\R),L_0^\infty(\R))$, 
		\begin{equation}
			[D_1G(x,\lambda)y](t)=\dot y(t)-D_2f(t,x(t)+\phi_{\lambda}(t),\lambda)y(t)
			\label{thmgprop1}
		\end{equation}
		exists as a continuous function. 
    \end{enumerate}
\end{theorem}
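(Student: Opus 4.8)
The plan is to reduce everything to \lref{lemfprop}, since $G$ differs from $F$ only by the differentiation operator and a restriction of domains. First I would treat part (a). Nonemptiness is immediate because $(0,\lambda)\in U$ for every $\lambda\in\Lambda$ by \eqref{nop}. For openness, I would argue exactly as in the proof of \lref{lemfprop}: given $(x_0,\lambda_0)\in U$, the permanence bound \eqref{nop} together with $\norm{\cdot}_\infty\le\norm{\cdot}_{1,\infty}$ yields a $\rho_0>0$ so that $x(t)+\phi_\lambda(t)\in\Omega$ for all $t\in\R$ whenever $x\in B_{\rho_0}(x_0)$ in $W_0^{1,\infty}(\R)$, and openness of $\Lambda$ supplies the $\lambda$-room; here one uses that members of $W_0^{1,\infty}(\R)$ have continuous (indeed Lipschitz) representatives so the condition ``for all $t$'' is meaningful. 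Simple connectedness follows because $U$ is \emph{starshaped} with respect to $(0,\lambda_0)$ for any fixed $\lambda_0$, or more directly convex in the $x$-variable for each fixed $\lambda$ and $\Lambda$ is an interval: the segment $s\mapsto(sx,\lambda)$ stays in $U$ by convexity of $\Omega$ and $\phi_\lambda(t)+sx(t)=(1-s)\phi_\lambda(t)+s(\phi_\lambda(t)+x(t))\in\Omega$, and then one can contract any loop first to a loop in the slice $\set{0}\tm\Lambda$ and then to a point since $\Lambda$ is contractible.

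For part (b), the decomposition $[G(x,\lambda)](t)=\dot x(t)-[F(x,\lambda)](t)$ does the work. The map $(x,\lambda)\mapsto\dot x$ is a bounded linear operator $W_0^{1,\infty}(\R)\tm\Lambda\to L_0^\infty(\R)$ by the very definition of $W_0^{1,\infty}(\R)$, hence continuous. The composition $W_0^{1,\infty}(\R)\tm\Lambda\hookrightarrow L_0^\infty(\R)\tm\Lambda\xrightarrow{F}L_0^\infty(\R)$ is continuous by \lref{lemfprop} together with the continuous embedding in \eqref{noembed}; note that $U\subseteq U_0$ under this embedding since ``for all $t$'' is stronger than ``a.e.''. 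Thus $G=(\text{derivative})-F$ is well-defined with values in $L_0^\infty(\R)$ and continuous. The identity $G(0,\lambda)=0$ is trivial since $\dot 0=0$ and $f(t,\phi_\lambda(t),\lambda)-f(t,\phi_\lambda(t),\lambda)=0$.

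For part (c), I would again split: $D_1G(x,\lambda)=\partial-D_1F(x,\lambda)$ where $\partial\colon W_0^{1,\infty}(\R)\to L_0^\infty(\R)$, $\partial y=\dot y$, is bounded linear (so it is its own derivative in the $x$-slot and contributes nothing to continuity in $(x,\lambda)$). By \lref{lemfprop} the partial derivative $D_1F(x,\lambda)\in L(L_0^\infty(\R))$ exists and depends continuously on $(x,\lambda)\in U_0$, acting as multiplication by $D_2f(t,x(t)+\phi_\lambda(t),\lambda)$. Precomposing with the embedding $W_0^{1,\infty}(\R)\hookrightarrow L_0^\infty(\R)$ gives a bounded operator $W_0^{1,\infty}(\R)\to L_0^\infty(\R)$, and the formula \eqref{thmgprop1} follows. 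The one point needing a short argument — and the main, if mild, obstacle — is that Fr\'echet differentiability of $F$ as a map into $L_0^\infty(\R)$ along the subspace $W_0^{1,\infty}(\R)$ is inherited with the \emph{same} derivative: this is automatic because differentiability is preserved under restriction to a continuously embedded subspace of the domain (the remainder estimate $\norm{\cdot}_\infty=o(\norm{h}_\infty)\le o(\norm{h}_{1,\infty})$ only improves), while the target space is unchanged. Continuity of $(x,\lambda)\mapsto D_1G(x,\lambda)$ in the operator norm of $L(W_0^{1,\infty}(\R),L_0^\infty(\R))$ then follows from continuity of $D_1F$ in $L(L_0^\infty(\R))$ composed with the fixed embedding, since $\norm{D_1F(x,\lambda)|_{W_0^{1,\infty}}-D_1F(x_0,\lambda_0)|_{W_0^{1,\infty}}}_{L(W_0^{1,\infty},L_0^\infty)}\le\norm{D_1F(x,\lambda)-D_1F(x_0,\lambda_0)}_{L(L_0^\infty)}$.
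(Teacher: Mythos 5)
Your proposal is correct and follows essentially the same route as the paper: the decomposition $G(x,\lambda)=\dot x-F(x,\lambda)$, the continuous embeddings \eqref{noembed}, and a reduction of (b)--(c) to \lref{lemfprop}, with (a) obtained from the permanence bound \eqref{nop}, openness as in the proof of \lref{lemfprop}, and convexity of $\Omega$ for simple connectedness. The extra details you supply (the explicit loop contraction and the remark that restricting $F$ to the continuously embedded subspace $W_0^{1,\infty}(\R)$ preserves differentiability with the same derivative) are correct refinements of what the paper leaves implicit.
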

\begin{proof}
	(a) From the convexity of $\Omega$ we deduce that $U$ is simply connected. The argument for the openness of $U_0\subseteq L_0^\infty(\R)\tm\Lambda$ in the proof of \lref{lemfprop} carries over to the subset $U\subseteq W_0^{1,\infty}(\R)\tm\Lambda$; one has $U\subseteq U_0$. 

	(b) and (c) The continuous embeddings \eqref{noembed} have two consequences: First, $x\mapsto\dot x$ is a linear bounded mapping $W_0^{1,\infty}(\R)\to L_0^\infty(\R)$. Second also the restriction $F|_U$ of the superposition operator from \lref{lemfprop} is well-defined and continuous with continuous partial derivative $D_1F|_U$. Since $G$ can be represented as $G(x,\lambda):=\dot x-F(x,\lambda)$, this yields the claims. 
\end{proof}

\begin{theorem}\label{thmchar}
	If $(H_0$--$H_1)$ hold, then the following is true for all $\lambda\in\Lambda$: 
	\begin{enumerate}[$(a)$]
		\item If $\phi:\R\to\Omega$ is a solution of \eqref{cde} homoclinic to $\phi_\lambda$, then the difference $\phi-\phi_\lambda$ is contained in $W_0^{1,\infty}(\R)$ and satisfies \eqref{abs},

		\item if $\psi\in L_0^\infty(\R)$ has a (strong) derivative a.e.\ in $\R$ and satisfies $G(\psi,\lambda)=0$, then $\psi\in W_0^{1,\infty}(\R)$ and $\psi+\phi_\lambda:\R\to\Omega$ is a solution of \eqref{cde} homoclinic to $\phi_\lambda$.
	\end{enumerate}
\end{theorem}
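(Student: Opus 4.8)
The plan is to translate the statement "$\phi$ is a solution of \eqref{cde} homoclinic to $\phi_\lambda$" into the statement "$\psi:=\phi-\phi_\lambda$ solves \eqref{abs} and lies in $W_0^{1,\infty}(\R)$", and back. For part $(a)$, I would start from the Volterra integral characterizations of $\phi$ and $\phi_\lambda$: since both satisfy $\phi(t)=\phi(\tau)+\int_\tau^t f(s,\phi(s),\lambda)\d s$ and likewise for $\phi_\lambda$, subtracting gives that $\psi$ is absolutely continuous on bounded intervals with $\dot\psi(t)=f(t,\phi(t),\lambda)-f(t,\phi_\lambda(t),\lambda)$ for a.a.\ $t\in\R$, which is exactly $[G(\psi,\lambda)](t)=0$; note $\psi(t)+\phi_\lambda(t)=\phi(t)\in\Omega$, so $(\psi,\lambda)$ is in the relevant domain. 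It remains to check $\psi\in W_0^{1,\infty}(\R)$. Boundedness of $\psi$ in $L^\infty$ is immediate since $\phi,\phi_\lambda$ are bounded; the homoclinicity hypothesis $\lim_{t\to\pm\infty}|\phi(t)-\phi_\lambda(t)|=0$ gives $\psi\in L_0^\infty(\R)$. For the derivative, I would invoke Hypothesis $(H_0)$: because $\phi$ and $\phi_\lambda$ together take values in a compact $K\subset\Omega$ (using $(H_1)$ permanence and boundedness), the bound \eqref{h0b} together with the Mean Value Theorem applied to $x\mapsto f(t,x,\lambda)$ gives $|\dot\psi(t)|=|f(t,\phi(t),\lambda)-f(t,\phi_\lambda(t),\lambda)|\le m_K(\lambda)\,|\psi(t)|$ for a.a.\ $t$, so $\dot\psi\in L^\infty(\R)$ and in fact $\dot\psi\in L_0^\infty(\R)$ since $\psi(t)\to0$. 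Hence $\psi\in W_0^{1,\infty}(\R)$ by the definition of that space.

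For part $(b)$, I would run the implications in the opposite direction. Given $\psi\in L_0^\infty(\R)$ with an a.e.\ strong derivative and $G(\psi,\lambda)=0$, the same Mean Value Theorem / \eqref{h0b} estimate as above (now with the compact set $K$ coming from $\psi$ essentially bounded, $\phi_\lambda$ bounded and $\Omega$ convex, exactly as in the proof of \lref{lemfprop}) gives $|\dot\psi(t)|\le m_K(\lambda)|\psi(t)|$ for a.a.\ $t$, so $\dot\psi\in L_0^\infty(\R)$; combined with $\psi\in L_0^\infty(\R)$ this yields $\psi\in W_0^{1,\infty}(\R)$. Next set $\phi:=\psi+\phi_\lambda$; then $\phi$ is continuous (indeed has a Lipschitz representative, being in $W^{1,\infty}$) with values in $\Omega$ a.e., and on a full-measure set $\dot\phi(t)=\dot\psi(t)+\dot\phi_\lambda(t)=f(t,\phi(t),\lambda)-f(t,\phi_\lambda(t),\lambda)+f(t,\phi_\lambda(t),\lambda)=f(t,\phi(t),\lambda)$. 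To conclude that $\phi$ is a genuine solution I would integrate: since $\psi$ is absolutely continuous on bounded intervals (being $W^{1,\infty}$) and $\phi_\lambda$ satisfies its Volterra equation, $\phi(t)=\phi(\tau)+\int_\tau^t f(s,\phi(s),\lambda)\d s$ for all $\tau,t$, i.e.\ $\phi$ is an entire solution of \eqref{cde}. Finally $\lim_{t\to\pm\infty}|\phi(t)-\phi_\lambda(t)|=\lim_{t\to\pm\infty}|\psi(t)|=0$, so $\phi$ is homoclinic to $\phi_\lambda$ (one should also record $\phi\neq\phi_\lambda$ only when $\psi\neq0$, matching the definition; the statement as phrased does not require $\psi\neq0$, so I would not belabor this).

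I expect the only genuinely delicate point to be the regularity bootstrap $\psi\in L_0^\infty(\R)\Rightarrow\dot\psi\in L_0^\infty(\R)$ in part $(b)$: one must be careful that the ``strong derivative a.e.'' hypothesis, together with $G(\psi,\lambda)=0$ interpreted pointwise a.e., genuinely forces $\psi$ to be absolutely continuous (not merely differentiable a.e.) so that $\psi\in W^{1,\infty}$ rather than something weaker. The clean way around this is to observe that once $\dot\psi$ is shown to be essentially bounded via the $m_K(\lambda)$ estimate, $\psi$ equals the integral of its derivative plus a constant — using that $\psi$, being a difference of a candidate solution expression and $\phi_\lambda$, inherits local absolute continuity from the integral equation structure — and then $\psi\in W^{1,\infty}$ follows directly. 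Everything else is bookkeeping: the choice of the compact set $K$, the Mean Value Theorem bound, and matching the limit conditions to the definitions of $L_0^\infty(\R)$ and of ``homoclinic to $\phi_\lambda$''.
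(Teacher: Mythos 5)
Your proposal follows essentially the same route as the paper's proof: in (a) one subtracts the two solution identities and applies the Mean Value Theorem with the bound \eqref{h0b} on a suitable compact $K\subset\Omega$ to obtain $\abs{\dot\psi(t)}\le m_K\abs{\psi(t)}\to0$, hence $\psi\in W_0^{1,\infty}(\R)$ and $G(\psi,\lambda)=0$, and in (b) the computation is simply reversed with the same estimate. The absolute-continuity subtlety you single out in (b) is not addressed in the paper's proof either (it reads the a.e.\ identity directly as the solution property), so your caution is legitimate, but note that the patch you sketch is circular as worded — there is no integral-equation structure for $\psi$ available until local absolute continuity is granted, which is the implicit reading of the differentiability hypothesis both you and the paper ultimately rely on.
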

\begin{proof}
    Let $\lambda\in\Lambda$ be fixed.

    (a) If $\phi:\R\to\Omega$ is an entire solution of \eqref{cde} homoclinic to $\phi_\lambda$, then the difference $\delta:=\phi-\phi_\lambda\in L_0^\infty(\R)$ satisfies $\dot\delta(t)+\dot\phi_\lambda(t)=f(t,\delta(t)+\phi_\lambda(t),\lambda)$ and consequently
    $$
        \dot\delta(t)
        =
        f(t,\delta(t)+\phi_\lambda(t),\lambda)-f(t,\phi_\lambda(t),\lambda)
        \quad\text{for a.a.\ }t\in\R.
    $$
    This has two consequences: First, there exists a compact $K\subseteq\Omega$ such that the inclusion $\phi_\lambda(t)+\theta\delta(t)\in K$ holds for all $t\in\R$, $\theta\in[0,1]$ and whence again the Mean Value Theorem \cite[p.~243, Thm.~4.C for $n=1$]{zeidler:95} implies
    \begin{align*}
        \abs{\dot\delta(t)}
        &=
        \abs{\int_0^1D_2f(t,\theta\delta(t)+\phi_\lambda(t),\lambda)\d\theta\delta(t)}\\
        &\leq
        \int_0^1\abs{D_2f(t,\theta\delta(t)+\phi_\lambda(t),\lambda)\d\theta}\abs{\delta(t)}
        \stackrel{\eqref{h0b}}{\leq}
        m_K\abs{\delta(t)}\xrightarrow[t\to\pm\infty]{}0,
    \end{align*}
    i.e.\ $\delta\in W_0^{1,\infty}(\R)$ holds. Second, $\delta$ defines an entire solution of the equation of perturbed motion $\dot x=f(t,x+\phi_\lambda(t),\lambda)-f(t,\phi_\lambda(t),\lambda)$, which in turn implies $G(\delta,\lambda)=0$. 
 
    (b) Let $\psi\in L_0^\infty(\R)$ satisfy $G(\psi,\lambda)=0$, i.e.\ 
    $$
		\dot\psi(t)
        \stackrel{\eqref{gdef}}{=}
        f(t,\psi(t)+\phi_\lambda(t),\lambda)-f(t,\phi_\lambda(t),\lambda)
        \quad\text{for a.a.\ }t\in\R.
    $$
    This means, $\dot\psi(t)+\dot\phi_\lambda(t)\equiv f(t,\psi(t)+\phi_\lambda(t),\lambda)$ a.e.\ on $\R$ implies that $\psi+\phi_\lambda$ is a solution of \eqref{cde} homoclinic to $\phi_\lambda$. Moreover, as in (a) one establishes $\dot\psi\in L_0^\infty(\R)$ and therefore $\psi\in W_0^{1,\infty}(\R)$ holds. 
\end{proof}
\section{Properness}
\label{sec3}
The above \tref{thmgprop} characterizes certain bounded entire solutions to a Carath{\'e}odory equation \eqref{cde} as zeros of an abstract equation \eqref{abs}. In order to apply appropriate tools from Functional Analysis to solve \eqref{abs}, however, the operator $G:U\to L_0^\infty(\R)$ needs to satisfy further conditions. The first of them is a weakened version of \emph{properness} (see App.~\ref{appB}), i.e.\ the fact that $G$-preimages of compact sets are compact again. For this purpose one needs a detailed understanding which subsets of the function spaces $L_0^\infty(\Omega)$ and $W_0^{1,\infty}(\R)$ are compact. Thereto, for a function $x:\R\to\R^d$ we define the \emph{shift} $(S^tx)(s):=x(t+s)$ for all $s,t\in\R$.

%\comment{IPL: Maybe we need to write a couple of lines here on why we tackle %the question of properness. In fact, we title this section "Properness" and %give sufficient results for properness at the end but the definition of %properness only comes in the appendix. It might be of help to give some %clarifications. What do you think?\\
%CP: Better now?\\
%IPL: I am very happy with this.}
Given a closed and totally disconnected set $Z\subset\R^d$ let us introduce the set
$$
    C_Z:=\set{x\in BC(\R)\,\bigg|\,\lim_{t\to\pm\infty}\dist_Z(x(t))=0}
$$
of all bounded continuous functions converging to $Z$ in both time directions. We borrow the following two compactness criteria from the literature:
%\begin{lemma}[compactness in $C_Z(\R)$, {cf.~\cite[Cor.~6]{rabier:04}}]
%    \label{lem:comp_in_C_Z}
%    A subset $\sF\subset C_Z(\R)$ is relatively compact, if and only if the following holds: 
%\begin{enumerate}[$(i)$]
%	\item $\sF$ is bounded,
%
%    \item $\sF$ is \emph{uniformly equicontinuous}, that is, for every $\eps>0$ there exists a $\delta>0$ such that %$\abs{t-s}<\delta$ implies $\abs{x(t)-x(s)}<\eps$ for all $x\in\sF$ and $t,s\in\R$,
%
%    \item if $(x_n)_{n\in\N}$ is a sequence in $\sF$, and  $(t_n)_{n\in\N}$ is a sequence in $\R$ with %$\lim\limits_{n\to\infty}|t_n|=\infty$ and  $\lim\limits_{n\to\infty}(S^{t_n}x_n)(t)=\bar x(t)$ for all $t\in\R$, then $\bar x\in %C_Z$ and $\bar x(\R)\subset Z$.
%\end{enumerate}
%In particular, if $Z\subset\R^d$ is compact, then for all $z\in Z$, $C_{\{z\}}\subset C_Z$ and $\sF\subset C_{\{z\}}$ is %relatively compact in $C_{\{z\}}$ if and only if $\set{z}$ is relatively compact in $C_Z$.
%\end{lemma}
\begin{lemma}[compactness in $C_Z$, {cf.~\cite[Cors.~6, 7]{rabier:04}}]
    \label{lem:comp_in_C_Z}
    A subset $\sF\subseteq C_Z$ is relatively compact, if and only if the following holds: 
    \begin{enumerate}[$(i)$]
		\item $\sF$ is bounded,

        \item $\sF$ is \emph{uniformly equicontinuous}, that is, for every $\eps>0$ there exists a $\delta>0$ such that $\abs{t-s}<\delta$ implies $\abs{x(t)-x(s)}<\eps$ for all $x\in\sF$ and $t,s\in\R$,

        \item if $(x_n)_{n\in\N}$ is a sequence in $\sF$ and $(t_n)_{n\in\N}$ is a sequence in $\R$ with $\lim\limits_{n\to\infty}|t_n|=\infty$ such that $\bar x(t):=\lim\limits_{n\to\infty}(S^{t_n}x_n)(t)$ for all $t\in\R$ defines a function $\bar x\in BC(\R)$, then $\bar x(\R)\subset Z$. 
        %\comment{CP: Modified such that it actually fits Rabier, Cor.\ 6 and %7}
    \end{enumerate}
    In particular, given any $z\in\R^d$, a subset $\sF\subset C_{\set{z}}$ is relatively compact if and only if (i), (ii) hold and there exists a compact and totally disconnected set $Z\subseteq\R^d$ such that $(iii)$ is fulfilled.
\end{lemma}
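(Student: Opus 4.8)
The statement is the Arzel{\`a}--Ascoli theorem for the space $C_Z$; it is essentially \cite[Cors.~6,7]{rabier:04}, and the plan is to present the structure of a direct proof. Everything rests on two elementary facts about a closed, totally disconnected set, which I would isolate first. Since both relative compactness of $\sF$ and condition $(i)$ force $\sF$ to be bounded in $BC(\R)$, say $\norm{x}_\infty\le M$ on $\sF$, I may replace $Z$ throughout by its intersection with a sufficiently large closed ball (legitimate once $\sF\neq\emptyset$, as then $Z$ comes within distance $1$ of $\overline{B_M(0)}$); this changes neither $\dist_Z$ on $\overline{B_M(0)}$ nor any of $(i)$--$(iii)$, so from now on $Z$ is compact. \textbf{Fact A:} every $x\in C_Z$ is uniformly continuous. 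If not, one gets $s_n,t_n$ with $t_n-s_n\to0$ and $\abs{x(s_n)-x(t_n)}\ge\eps_0$; the case of $(s_n)$ bounded is excluded by continuity of $x$ on a compact interval, so $s_n\to+\infty$ (say), and then $K_n:=x([s_n,t_n])$ is a compact connected set of diameter $\ge\eps_0$ lying in the fixed ball $\overline{B_M(0)}$ and inside the neighbourhood $\{y:\dist_Z(y)\le\eps'_n\}$ with $\eps'_n:=\sup_{\tau\ge s_n}\dist_Z(x(\tau))\to0$; a Hausdorff-convergent subsequence of $(K_n)$ then has a connected limit $K\subseteq Z$ with at least two points, contradicting total disconnectedness. \textbf{Fact B} (same mechanism): for every $\eta>0$ there is $\eps>0$ such that every connected subset of $\{y:\dist_Z(y)\le\eps\}$ has diameter $\le\eta$.

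For necessity, assume $\sF$ relatively compact. Boundedness is $(i)$. Covering the compact set $\overline{\sF}$, a subset of the closed subspace $C_Z\subseteq BC(\R)$, by finitely many balls $B_{\eps/3}(x_i)$, Fact A gives each $x_i$ a modulus of continuity, hence a common $\delta$, and the usual $\tfrac\eps3+\tfrac\eps3+\tfrac\eps3$ estimate yields uniform equicontinuity $(ii)$. The same finite net, using that each $x_i$ has a tail on which $\dist_Z(x_i(\cdot))<\tfrac\eps2$, yields \emph{uniform decay}: for every $\eps>0$ there is $T>0$ with $\dist_Z(x(\tau))<\eps$ for all $x\in\sF$ and $\abs{\tau}\ge T$. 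Then $(iii)$ follows: if $\bar x(t)=\lim_n(S^{t_n}x_n)(t)$ with $\abs{t_n}\to\infty$, then for fixed $t$ and all large $n$ one has $\dist_Z(x_n(t_n+t))<\eps$, so by continuity of $\dist_Z$ we get $\dist_Z(\bar x(t))\le\eps$ for every $\eps$, i.e.\ $\bar x(t)\in Z$.

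Sufficiency is the substantial direction. Assume $(i)$--$(iii)$. First I would recover uniform decay from $(iii)$: if it failed there would be $x_k\in\sF$, $\abs{t_k}\to\infty$, $\dist_Z(x_k(t_k))\ge\eps_0$; by $(i)$, $(ii)$, the classical Arzel{\`a}--Ascoli theorem on each $[-m,m]$ and a diagonal extraction, a subsequence of $(S^{t_k}x_k)$ converges locally uniformly to some $\bar x\in BC(\R)$, and $(iii)$ forces $\bar x(\R)\subseteq Z$, whence $\dist_Z(\bar x(0))=0$, contradicting $\dist_Z(\bar x(0))=\lim_k\dist_Z(x_k(t_k))\ge\eps_0$. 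Now take any sequence $(x_n)$ in $\sF$; Arzel{\`a}--Ascoli plus a diagonal argument give a subsequence converging locally uniformly to some $x\in BC(\R)$, and $x\in C_Z$ obeying the same uniform-decay estimate since $\dist_Z(x(\tau))=\lim_n\dist_Z(x_n(\tau))$. To promote this to convergence in $\norm{\cdot}_\infty$ --- the crux --- suppose not: passing to a further subsequence there are $\eps_0>0$ and reals $s_n$ with $\abs{x_n(s_n)-x(s_n)}\ge\eps_0$; since the convergence is uniform on compacta, $\abs{s_n}\to\infty$, say $s_n\to+\infty$. Put $\eta:=\tfrac{\eps_0}{3}$, let $\eps>0$ be the constant of Fact B, and let $T$ be the matching uniform-decay threshold (valid for all $x_n$ and for $x$). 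For $n$ with $s_n>T$, both $x_n([T,s_n])$ and $x([T,s_n])$ are connected subsets of $\{y:\dist_Z(y)\le\eps\}$, hence of diameter $\le\tfrac{\eps_0}{3}$, so $\abs{x_n(s_n)-x_n(T)}\le\tfrac{\eps_0}{3}$ and $\abs{x(s_n)-x(T)}\le\tfrac{\eps_0}{3}$; since $T$ is fixed, $x_n(T)\to x(T)$, so for large $n$ the triangle inequality gives $\abs{x_n(s_n)-x(s_n)}<\eps_0$, a contradiction. Thus every sequence in $\sF$ has a $\norm{\cdot}_\infty$-convergent subsequence, and, $BC(\R)$ being complete, $\sF$ is relatively compact.

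Finally, the ``in particular'' clause is a corollary. For any $Z\ni z$ one has $C_{\set{z}}\subseteq C_Z$ because $\dist_Z\le\dist_{\set{z}}$; hence if $\sF\subseteq C_{\set{z}}$ is relatively compact, then $(i)$, $(ii)$ and $(iii)$ for the compact totally disconnected set $\{z\}$ all follow from the main statement. Conversely, if $(i)$, $(ii)$ hold and $(iii)$ holds for \emph{some} compact totally disconnected $Z\ni z$, then $\sF\subseteq C_{\set{z}}\subseteq C_Z$ and the sufficiency part above applies to that $Z$. The main obstacle is exactly the last step of the sufficiency proof --- upgrading local uniform convergence to convergence on all of $\R$. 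Conditions $(i)$ and $(ii)$ are by themselves useless here (translates $S^n\phi$ of a fixed nonzero $\phi\in C_0(\R)$ form a bounded, uniformly equicontinuous, non-relatively-compact family lying in some $C_Z$), and the argument must genuinely combine the tightness-at-infinity hidden in $(iii)$ --- extracted as uniform decay --- with the geometry of $Z$ encoded in Fact B.
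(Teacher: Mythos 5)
The paper itself does not prove this lemma at all: it is imported from Rabier (\cite[Cors.~6, 7]{rabier:04}), introduced with ``we borrow the following two compactness criteria from the literature'', so there is no internal proof to compare against. Your self-contained argument is correct, and its skeleton --- uniform continuity of members of $C_Z$ (your Fact A), the observation that small neighbourhoods of a compact totally disconnected set contain no connected pieces of appreciable diameter (Fact B), the uniform decay at infinity extracted from condition $(iii)$ by a translation/Arzel\`a--Ascoli argument, and the three-term estimate through a fixed time $T$ to upgrade locally uniform to uniform convergence --- is exactly the circle of ideas on which Rabier's corollaries rest, so you have in effect reconstructed the cited proof rather than found a shortcut. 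Two remarks. First, the preliminary truncation of $Z$ to a compact set is essential, not cosmetic, and you handle it correctly: Fact B is false for general closed totally disconnected sets (for instance $Z=\bigcup_{k\geq1}\set{k}\tm\tfrac{1}{k}\Z\subset\R^2$ is closed, discrete, totally disconnected, yet its $\eps$-neighbourhood contains the whole line $\set{k}\tm\R$ as soon as $k>1/(2\eps)$), and the truncation is legitimate precisely because $(i)$ (resp.\ relative compactness) confines all values of $\sF$ to a fixed ball, as you note. Second, in the converse half of the ``in particular'' clause you write $C_{\set{z}}\subseteq C_Z$, which presupposes $z\in Z$, whereas the statement only provides \emph{some} compact totally disconnected $Z$; this is harmless and easily repaired: either observe that for nonempty $\sF$ condition $(iii)$ applied to $x_n:=x$, $t_n:=n$ with a fixed $x\in\sF$ (so that $\bar x\equiv z$) already forces $z\in Z$, or simply replace $Z$ by $Z\cup\set{z}$, which is still compact and totally disconnected and still satisfies $(iii)$. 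With that micro-repair the argument is complete.
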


\begin{lemma}[compactness in $L^\infty(I)$, {cf.~\cite[Thm.~3.9]{eveson:00}}]
	\label{lemcomp}
	Let $I\subseteq\R$ be an interval. A subset $\sF\subseteq L^\infty(I)$ is relatively compact, if and only if the following holds:
	\begin{enumerate}[$(i)$]
		\item $\sF$ is bounded, 
		\item $\sF$ is \emph{uniformly equimeasurable} on $I$, that is, for every $\eps>0$ there exists a partition $\set{P_1,\ldots,P_n}$ of $I$ such that for each $j\in\set{1,\ldots,n}$ and for a.a.\ $s,t\in P_j$ one has $\abs{x(t)-x(s)}<\eps$ for all $x\in\sF$.
	\end{enumerate}
\end{lemma}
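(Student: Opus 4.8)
The statement is an Arzel\`a--Ascoli-type characterization, and since $L^\infty(I)$ is complete, the plan is to prove both implications by directly verifying \emph{total boundedness} of $\sF$. For the necessity of (i)--(ii): boundedness is immediate from total boundedness, so the only real point is uniform equimeasurability. I would first record the one-function case --- a single $g\in L^\infty(I)$ is equimeasurable, because its essential range lies in some ball of $\R^d$, which can be covered by finitely many pairwise disjoint Borel sets $Q_1,\ldots,Q_m$ of diameter $<\tfrac{\eps}{3}$; then $\set{g^{-1}(Q_k):k=1,\ldots,m}$, after absorbing the null exceptional set into one block, is a finite partition of $I$ on each block of which $g$ oscillates by less than $\tfrac{\eps}{3}$ a.e. Now fix $\eps>0$, take a finite $\tfrac{\eps}{3}$-net $g_1,\ldots,g_p$ for $\sF$, pick such a partition $\mathcal P_i$ for each $g_i$, and let $\mathcal P=\set{P_1,\ldots,P_n}$ be their common refinement (still a finite partition into measurable sets). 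For arbitrary $x\in\sF$ choose $i$ with $\norm{x-g_i}_\infty<\tfrac{\eps}{3}$; since each block $P_j$ lies in a block of $\mathcal P_i$, the triangle inequality gives $\abs{x(s)-x(t)}<\eps$ for a.a.\ $s,t\in P_j$, which is exactly (ii).

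For the sufficiency, assume (i) with $\norm{x}_\infty\le M$ on $\sF$ and (ii); I would again produce, for each $\eps>0$, a finite $\eps$-net. Take a partition $\mathcal P=\set{P_1,\ldots,P_n}$ with the defining property for $\tfrac{\eps}{3}$. For $x\in\sF$ and each block $P_j$ of positive measure, (ii) forces the essential oscillation of $x$ on $P_j$ to be $\le\tfrac{\eps}{3}$, so there is a value $c_j(x)\in\R^d$ with $\abs{c_j(x)}\le M$ and $\abs{x(s)-c_j(x)}\le\tfrac{\eps}{3}$ for a.a.\ $s\in P_j$ (set $c_j(x):=0$ on null blocks). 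The vectors $(c_1(x),\ldots,c_n(x))$, $x\in\sF$, form a bounded subset of $(\R^d)^n$, hence admit a finite $\tfrac{\eps}{3}$-net, say $(c_1^{(\ell)},\ldots,c_n^{(\ell)})$ for $\ell=1,\ldots,L$, in the coordinatewise sup norm. The step functions $s_\ell:=\sum_{j=1}^n c_j^{(\ell)}\chi_{P_j}$ then form a finite $\eps$-net: given $x\in\sF$, choosing $\ell$ with $\abs{c_j(x)-c_j^{(\ell)}}<\tfrac{\eps}{3}$ for every $j$ yields $\abs{x(s)-s_\ell(s)}\le\abs{x(s)-c_j(x)}+\abs{c_j(x)-c_j^{(\ell)}}<\eps$ for a.a.\ $s\in P_j$, whence $\norm{x-s_\ell}_\infty<\eps$. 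Thus $\sF$ is totally bounded and, by completeness, relatively compact.

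I do not expect a serious obstacle: this is a measure-theoretic variant of Arzel\`a--Ascoli. The technical care goes into two places --- in the necessity part, the one-function equimeasurability (covering the essential range and pulling back) together with the passage to a common refinement; and in the sufficiency part, making the approximation $x\mapsto s_\ell$ \emph{uniform} over $\sF$, which is precisely what the \emph{uniform} equimeasurability in (ii) plus the bound in (i) deliver, the remaining input being only finite-dimensional total boundedness of the block-value vectors and routine bookkeeping for blocks and exceptional sets of measure zero.
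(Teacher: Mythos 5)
Your proof is correct, but note that the paper itself does not prove this lemma at all: it is quoted verbatim from the literature (Eveson, \emph{Compactness criteria and measures of noncompactness in function spaces}, Thm.~3.9), so there is no in-paper argument to compare against. Your self-contained route --- proving total boundedness directly in both directions, with the single-function equimeasurability obtained by covering the (bounded) essential range by finitely many disjoint small-diameter Borel sets and pulling back, then passing to a common refinement over an $\tfrac{\eps}{3}$-net, and conversely building a finite net of step functions from block values $c_j(x)$ --- is a clean Arzel\`a--Ascoli-style argument and is sound; completeness of $L^\infty(I)$ then upgrades total boundedness to relative compactness. The only point worth tightening is a cosmetic one: in the sufficiency direction your net functions $s_\ell=\sum_j c_j^{(\ell)}\chi_{P_j}$ are measurable only if the given partition blocks $P_j$ are, which the statement does not formally guarantee; this is immaterial, since total boundedness follows equally well by covering $\sF$ with the finitely many sets $\set{x\in\sF:\,\abs{c_j(x)-c_j^{(\ell)}}<\tfrac{\eps}{3}\text{ for all }j}$, each of which has $L^\infty$-diameter at most $\eps$ by the same triangle inequality, or by choosing $c_j(x):=x(t_j)$ at a representative point $t_j\in P_j$ outside the exceptional null set. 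With that remark, the argument stands as a complete elementary proof of the cited criterion.
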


Given this, we arrive at the following characterization of compactness in the function spaces $L_0^\infty(\R)$ and $W_0^{1,\infty}(\R)$ immediately relevant for our purposes:
\begin{proposition}[compactness in $L_0^\infty(\R)$]
	\label{propcom1}
	A subset $\sF\subseteq L_0^\infty(\R)$ is relatively compact, if and only if the following holds:
	\begin{enumerate}[$(i)$]
		\item $\sF$ is bounded, 

		\item $\sF$ is uniformly equimeasurable on $\R$, 

		\item $\sF$ is \emph{uniformly essentially vanishing at infinity}, that is, for every $\eps>0$ there exists a $T_\eps>0$ such that for a.a.\ $t\in\R\setminus (-T_\eps,T_\eps)$ one has $\abs{x(t)}<\eps$ for all $x\in\sF$. 
	\end{enumerate}
\end{proposition}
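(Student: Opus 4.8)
The plan is to reduce the assertion to the $L^\infty$-compactness criterion \lref{lemcomp} applied on the interval $I=\R$, exploiting the single structural fact that $L_0^\infty(\R)$ is a \emph{closed} subspace of $L^\infty(\R)$ carrying the subspace topology (cf.~\eqref{noembed} and the surrounding discussion). With this, relative compactness of $\sF$ in $L_0^\infty(\R)$ is equivalent to relative compactness of $\sF$ in $L^\infty(\R)$ \emph{together with} the $L^\infty$-closure of $\sF$ remaining inside $L_0^\infty(\R)$; the first part is governed by $(i)$ and $(ii)$ via \lref{lemcomp}, and $(iii)$ is precisely what confines the closure to $L_0^\infty(\R)$.

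For sufficiency I would assume $(i)$--$(iii)$ and first invoke \lref{lemcomp} to obtain that $\sF$ is relatively compact in $L^\infty(\R)$; write $K$ for its $L^\infty$-closure, a compact set. The key step is $K\subseteq L_0^\infty(\R)$: given $x\in K$ and $\eps>0$, pick $y\in\sF$ with $\norm{x-y}_\infty<\tfrac\eps2$ and then, by $(iii)$ used with $\tfrac\eps2$, a radius $T>0$ with $\abs{y(t)}<\tfrac\eps2$ for a.a.\ $t\in\R\setminus(-T,T)$; the triangle inequality yields $\abs{x(t)}<\eps$ for a.a.\ such $t$, so $x$ vanishes at $\pm\infty$ in the essential sense, i.e.\ $x\in L_0^\infty(\R)$. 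Since $K$ is then a compact subset of $L^\infty(\R)$ contained in $L_0^\infty(\R)$ and the latter carries the induced topology, $K$ is compact in $L_0^\infty(\R)$; as $\sF\subseteq K$, this proves relative compactness of $\sF$ in $L_0^\infty(\R)$.

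For necessity I would start from $\sF$ relatively compact in $L_0^\infty(\R)$. Through the continuous embedding \eqref{noembed} it is relatively compact in $L^\infty(\R)$, so \lref{lemcomp} delivers $(i)$ and $(ii)$. To obtain $(iii)$, fix $\eps>0$, use total boundedness to cover $\sF$ by finitely many balls $B_{\eps/2}(x_1),\dots,B_{\eps/2}(x_m)$ with centres $x_k\in L_0^\infty(\R)$, choose $T_k>0$ with $\abs{x_k(t)}<\tfrac\eps2$ for a.a.\ $t\in\R\setminus(-T_k,T_k)$, and set $T_\eps:=\max_{1\le k\le m}T_k$. For any $x\in\sF$, selecting $k$ with $\norm{x-x_k}_\infty<\tfrac\eps2$ gives $\abs{x(t)}\le\abs{x(t)-x_k(t)}+\abs{x_k(t)}<\eps$ for a.a.\ $t\in\R\setminus(-T_\eps,T_\eps)$, which is $(iii)$.

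I do not expect a genuine obstacle here, since the substantive compactness statement is imported from \lref{lemcomp}; the only delicate point is the order of quantifiers in the a.e.\ clauses, as the exceptional null set in each estimate may depend on the function and on the chosen index $k$ --- one handles this by taking the union of the finitely many null sets attached to the centres and, for a given $x$, adjoining the one coming from $\norm{x-x_k}_\infty<\tfrac\eps2$. It is also worth remarking that, because $L_0^\infty(\R)$ is closed in $L^\infty(\R)$, condition $(iii)$ is in fact already a consequence of $(i)$ and $(ii)$ for subsets of $L_0^\infty(\R)$ (indeed the necessity argument above shows this), so it is listed only because it is the form actually verified in the applications to follow.
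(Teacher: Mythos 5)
Your proof is correct and follows essentially the same route as the paper: both directions reduce to the $L^\infty$-criterion of \lref{lemcomp}, with $(iii)$ obtained from a finite $\tfrac{\eps}{2}$-net for necessity and used to confine the $L^\infty$-limit points to $L_0^\infty(\R)$ for sufficiency (the paper argues with a convergent subsequence where you argue with the compact closure, a cosmetic difference). Your closing remark that $(iii)$ is automatically implied by $(i)$--$(ii)$ for subsets of $L_0^\infty(\R)$ is also sound, since $L_0^\infty(\R)$ is closed in $L^\infty(\R)$.
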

\begin{proof}
	$(\Rightarrow)$ Let $\sF$ be relatively compact in $L_0^\infty(\R)$. Then $\sF$ is also relatively compact in $L^\infty(\R)$. Consequently, \lref{lemcomp} implies that $\sF$ is bounded and uniformly equimeasurable on $\R$. Let $\eps>0$. Now, observe that the relatively compactness of $\sF$ in $L_0^\infty(\R)$ implies the existence of a finite number of functions $x_1,\ldots,x_k\in \sF$ such that 
 \begin{equation*}
     \sF\subset \bigcup_{i=1}^k B_{\tfrac{\eps}{2}}(x_i),
 \end{equation*}
 where $B_{\eps/2}(x_i)\subset L_0^\infty(\R)$ is an open ball centered $x_i$ with radius $\tfrac{\eps}{2}$. What is more, there exists $T_\eps>0$ such that $|x_i(t)|<\tfrac{\eps}{2}$ for a.a.\ $t\in \R\setminus (-T_\eps,T_\eps)$ and $1\leq i\leq k$. Let $x\in\sF$. Then there exists $1\leq i_0\leq k$ such that $x\in B_{\eps/2}(x_{i_0})$ and thus
 \begin{equation*}
     |x(t)|\leq |x(t)-x_{i_0}(t)|+|x_{i_0}(t)|<\tfrac{\eps}{2}+\tfrac{\eps}{2}=\eps
 \end{equation*}
 for a.a.~$t\in \R\setminus (-T_\eps,T_\eps)$, which proves the condition (iii). 
 
	$(\Leftarrow)$ Assume the set $\sF$ satisfies the conditions (i$-$iii). Then \lref{lemcomp} ensures that $\sF$ is relatively compact in $L^\infty(\R)$. By sequential compactness, if $(x_n)_{n\in\N}$ is a sequence in $\sF$ this implies that there exist sequences $(k_n)_{n\in\N}$ and $x\in L^\infty(\R)$ such that $\lim_{n\to\infty}\norm{x_{k_n}-x}_\infty=0$. It remains to show $x\in L_0^\infty(\R)$. Thereto, given any $\eps>0$ there exists a $N\in\N$ such that $\norm{x_{k_n}-x}_\infty<\tfrac{\eps}{3}$ for all $n\geq N$, while (iii) implies that there is a $T>0$ with $\abs{x_{k_n}(t)}<\tfrac{\eps}{3}$ for a.a.\ $\abs{t}\geq T$ and for all $n\in\N$. This results in
	\begin{align*}
		\abs{x(t)}
		&\leq
		\abs{x(t)-x_{k_n}(t)}+\abs{x_{k_n}(t)}
		\leq
		\norm{x-x_{k_n}}_\infty+\abs{x_{k_n}(t)}
		<
		\eps
		\fall n\geq N
	\end{align*}
	and a.a.\ $t\in\R\setminus(-T,T)$, which means $\lim_{t\to\pm\infty}x(t)=0$, as desired
\end{proof}

As an immediate application, we obtain:
\begin{corollary}[compactness of multiplication operators]
    \label{cormult}
    If $A:\R\to\R^{d\tm d}$ is essentially bounded and satisfies the limit relations $\lim_{t\to\pm\infty}A(t)=0$, then the multiplication operator $M:W_0^{1,\infty}(\R)\to L_0^\infty(\R)$ pointwise defined as
    \begin{equation}
        [Mx](t):=A(t)x(t)\quad\text{for a.a.\ }t\in\R
        \label{cormult1}
    \end{equation}
    is compact. 
\end{corollary}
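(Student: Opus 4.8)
The plan is to use the characterization of relative compactness from Proposition \ref{propcom1}, applied to the image of an arbitrary bounded set. Since $M$ is linear, it is compact precisely when it maps bounded sets to relatively compact sets, so fix a bounded $B\subseteq W_0^{1,\infty}(\R)$, put $C:=\sup_{x\in B}\norm{x}_{1,\infty}<\infty$, and set $\sF:=\set{Mx:\,x\in B}\subseteq L_0^\infty(\R)$. Two of the three conditions of Proposition \ref{propcom1} are immediate: boundedness of $\sF$ follows from $\norm{Mx}_\infty\leq\norm{A}_\infty\abs{x}_\infty\leq C\norm{A}_\infty$, and the uniform essential vanishing at infinity of $\sF$ follows at once from $\abs{[Mx](t)}\leq C\abs{A(t)}$ together with $\lim_{t\to\pm\infty}A(t)=0$.

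The substance is the uniform equimeasurability of $\sF$ on $\R$. Given $\eps>0$, first use the decay of $A$ to pick $T>0$ with $\esssup_{\abs t\geq T}\abs{A(t)}<\tfrac{\eps}{2C}$; then for a.a.\ $s,t$ in the tail $[T,\infty)$ (resp.\ $(-\infty,-T]$) one has $\abs{[Mx](t)-[Mx](s)}\leq\abs{A(t)}\abs{x(t)}+\abs{A(s)}\abs{x(s)}\leq 2C\esssup_{\abs r\geq T}\abs{A(r)}<\eps$ for every $x\in B$, so the two tails already behave well. On the central interval $[-T,T]$ I would argue that $\sF|_{[-T,T]}:=\set{(Mx)|_{[-T,T]}:x\in B}$ is in fact relatively compact in $L^\infty([-T,T])$: the functions $x|_{[-T,T]}$, $x\in B$, are uniformly bounded and (via the Lipschitz representatives of $W^{1,\infty}$-functions, with Lipschitz constant $\leq C$) uniformly equicontinuous, hence by Arzel\`a--Ascoli $\set{x|_{[-T,T]}:x\in B}$ is relatively compact in $C([-T,T])$ and a fortiori in $L^\infty([-T,T])$; multiplying by the fixed $A|_{[-T,T]}$ is continuous on $L^\infty([-T,T])$, so $\sF|_{[-T,T]}$ is relatively compact. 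Lemma \ref{lemcomp} then furnishes a finite partition of $[-T,T]$ on whose pieces all members of $\sF$ oscillate by less than $\eps$; adjoining the two tail intervals gives the required finite partition of $\R$, and Proposition \ref{propcom1} yields relative compactness of $\sF$.

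A slightly more hands-on alternative, which I would mention as a sanity check, avoids Proposition \ref{propcom1} entirely: take a sequence $(x_n)$ in $B$; the Arzel\`a--Ascoli/diagonal argument above produces a subsequence converging uniformly on every compact subinterval to some $x\in BC(\R)$ with $\norm{x}_\infty\leq C$, and then $Mx_n\to Ax$ in $L^\infty(\R)$ because for a.a.\ $\abs t\geq T$ one controls $\abs{A(t)}(\abs{x_n(t)}+\abs{x(t)})\leq 2C\esssup_{\abs r\geq T}\abs{A(r)}$ while on $[-T,T]$ one uses $\norm{A}_\infty\sup_{\abs t\leq T}\abs{x_n(t)-x(t)}\to0$; since $\abs{[Ax](t)}\leq C\abs{A(t)}\to0$ the limit lies in $L_0^\infty(\R)$.

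The only genuine obstacle here is conceptual rather than computational: the unit ball of $W_0^{1,\infty}(\R)$ is \emph{not} relatively compact in $L^\infty(\R)$ (a bump of fixed shape translated to infinity is bounded, uniformly Lipschitz, and vanishing at infinity, yet has no convergent subsequence), so the proof must genuinely exploit $\lim_{t\to\pm\infty}A(t)=0$ to annihilate the tails; once the splitting into a compact central region plus decaying tails is in place, both the verification of Proposition \ref{propcom1}$(ii)$ and the direct sequential argument are routine.
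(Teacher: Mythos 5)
Your proof is correct and takes essentially the same route as the paper's: both verify the three conditions of \pref{propcom1} for the image of a bounded set, killing the tails with the decay of $A$ and treating the central interval via the uniform Lipschitz bound and Arzel\`a--Ascoli. The only (harmless) difference is that on $[-T,T]$ you extract the equimeasurability partition from the necessity direction of \lref{lemcomp} applied to the relatively compact set $\set{(Mx)|_{[-T,T]}:x\in B}$, whereas the paper builds it explicitly by combining a partition adapted to $A$ with a uniform-continuity mesh for the unit ball and the product-rule estimate; your auxiliary sequential argument is likewise sound.
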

\begin{proof}
    Let $\sB$ denote the closed unit ball in $W_0^{1,\infty}(\R)$. We have to establish that the image $\sF:=M\sB\subset L_0^\infty(\R)$ is relatively compact. For this, we verify that all assumptions of \pref{propcom1} are satisfied and thereto abbreviate $C:=\esssup\limits_{t\in\R}|A(t)|$. 

    \underline{ad (i)} For every $y\in\sF$ there is a $x\in\sB$ such that
\begin{equation*}
\abs{y(t)}=\abs{A(t)x(t)}\leq C\norm{x}_{1,\infty}\leq C \text{ for a.a. }t\in\R.
\end{equation*}

    \underline{ad (ii)} Let $\eps>0$. Since $\lim\limits_{t\to\pm\infty}A(t)=0$, there exists $T_\eps>0$ such that 
    $$
        |A(t)|<\tfrac{\eps}{4}\quad\text{for a.a.\ }t\in \R\setminus (-T_\eps,T_\eps). 
    $$
    Note that a singleton $\{A|_{[-T_\eps,T_\eps]}\}$ is compact in $L^{\infty}([-T_\eps,T_\eps],\R^{d\tm d})$ and consequently \lref{lemcomp} implies the existence of a partition $\bigl\{\widetilde{P}_1,\ldots,\widetilde{P}_m\bigr\}$ of $[-T_\eps,T_\eps]$ such that for each $i\in\set{1,\ldots,m}$ and for a.a.\ $s,t\in \widetilde{P}_i$ one has $\abs{A(t)-A(s)}<\tfrac{\eps}{4}$. For $x\in\sB$ the Fundamental Theorem of Calculus \cite[p.~85, Thm.~3.30]{leoni:09} implies
    $$
        |x(t)-x(s)|
        \abs{\int_s^t\dot{x}(r)\d r}
        \leq
        \int_s^t\left |\dot{x}(r)\right|\d r
        \leq 
        \int_s^t\|\dot{x}\|_{\infty}\d r
        \leq
        |t-s|\fall t,s\in\R.
    $$
    Hence, the classical Arzel\`{a}-Ascoli theorem \cite[p.~95, 1.19c]{zeidler:95} yields that the family $B_\eps:=\set{u|_{[-T_\eps, T_\eps]}:\,u\in\sB}$ of functions from $\sB$ restricted to $[-T_\eps,T_\eps]$ is compact in $C([-T_\eps, T_\eps])$. Hence, there exists $\delta>0$ such that 
    $$
        |t-s|<\delta
        \Rightarrow
        |x(t)-x(s)|<\tfrac{\eps}{4C}\fall t,s\in [-T_\eps,T_\eps],\,x\in\sB.
    $$
    For positive integers $n>1+2T_\eps/\delta$ we now abbreviate
    \begin{align*}
        \bar{P}_i:&=\left[-T_\eps+\tfrac{2T_\eps}{n-1}(i-1),-T_\eps+\tfrac{2T_\eps}{n-1}i\right] \text{ for }1\leq i<n,&
        P_n:&=\R\setminus (-T_\eps,T_\eps)
    \end{align*}
    and define the partition $\left\{P_{ij}:=\widetilde{P}_i\cap \bar{P}_j\mid 1\leq i\leq m,\,1\leq j<n\right\}\cup \{P_n\}$, whose empty intersections are neglected. On the one hand, one has
    \begin{align}\label{infinity}
 \begin{split}
		\abs{y(t)-y(s)}&=|A(t)x(t)-A(s)x(s)|
		\leq
		\abs{A(t)} \abs{x(t)}+\abs{A(s)} \abs{x(s)}\\
		&\leq
		\abs{A(t)} \|x\|_{1,\infty}+\abs{A(s)} \|x\|_{1,\infty}
		\leq
		\tfrac{\eps}{4}+\tfrac{\eps}{4}<\eps
   \end{split}
	\end{align}
	for a.a.\ $t,s\in P_n$ and on the other hand it is
  \begin{align*}
		\abs{y(t)-y(s)}&=|A(t)x(t)-A(s)x(s)|
		\leq
		|A(t)-A(s)||x(t)|+|A(s)| |x(t)-x(s)|\\
		&\leq
		|A(t)-A(s)|\|x\|_{1,\infty}+C|x(t)-x(s)|\\
		&\leq
		|A(t)-A(s)|+C|x(t)-x(s)|
		\leq
		\frac{\eps}{4}+C\frac{\eps}{4C}<\eps
  \quad\text{for a.a.\ }t,s\in P_{ij}.
	\end{align*}

    \underline{ad (iii)} This follows directly from \eqref{infinity}. 

    In conclusion, \pref{propcom1} implies that $T\sB$ is relatively compact in $L^{\infty}_0(\R)$. 
\end{proof}

We continue with two further compactness criteria: 
\begin{corollary}[compactness in $W^{1,\infty}(I)$]
    \label{corW1coma}
    Let $I\subseteq\R$ be an interval. A subset $\sF\subseteq W^{1,\infty}(I)$ is relatively compact, if and only if the following holds: 
    \begin{itemize}
        \item[(i)] $\sF$ is bounded, 

        \item[(ii)] for every $\eps>0$ there exists a partition $\set{P_1,\ldots,P_n}$ of $I$ such that for each $j\in\set{1,\ldots,n}$ and for a.a.\ $s,t\in P_j$ one has $\abs{\dot x(t)-\dot x(s)}<\eps$ for all $x\in\sF$.
    \end{itemize}
\end{corollary}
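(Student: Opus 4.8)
The plan is to reduce the assertion to the criterion \lref{lemcomp} for the space $L^\infty(I)$ by means of the isometric embedding $j:W^{1,\infty}(I)\to L^\infty(I)\tm L^\infty(I)$, $j(x):=(x,\dot x)$, where the product is endowed with the norm $\norm{(u,v)}:=\max\set{\norm{u}_\infty,\norm{v}_\infty}$ so that $\norm{j(x)}=\norm{x}_{1,\infty}$. Since $j$ is an isometry onto a closed subspace, a subset $\sF\subseteq W^{1,\infty}(I)$ is relatively compact if and only if $j(\sF)$ is; and because a subset of a finite product of metric spaces is relatively compact precisely when each of its coordinate projections is, this happens if and only if both $\sF$ and $\dot\sF:=\set{\dot x:\,x\in\sF}$ are relatively compact in $L^\infty(I)$. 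To each of these two sets I would then apply \lref{lemcomp}.

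With this reduction the implication $(\Rightarrow)$ is immediate: relative compactness of $\sF$ in $W^{1,\infty}(I)$ forces $\sF$ and $\dot\sF$ to be relatively compact in $L^\infty(I)$, hence by \lref{lemcomp} both are bounded in $L^\infty(I)$ --- which is exactly the boundedness $(i)$ of $\sF$ in $W^{1,\infty}(I)$ --- and $\dot\sF$ is uniformly equimeasurable, which is $(ii)$.

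For the converse $(\Leftarrow)$, assumption $(i)$ makes $\dot\sF$ bounded in $L^\infty(I)$ and $(ii)$ is the uniform equimeasurability of $\dot\sF$, so \lref{lemcomp} already yields that $\dot\sF$ is relatively compact in $L^\infty(I)$; it remains to prove that $\sF$ is relatively compact in $L^\infty(I)$ as well. The idea is to recover the functions from their derivatives: given a sequence $(x_n)$ in $\sF$, extract a subsequence along which $\dot x_{n_k}\to g$ in $L^\infty(I)$; since $(i)$ bounds $(x_n)$ in $L^\infty(I)$ and makes the Lipschitz representatives uniformly Lipschitz, a further diagonal subsequence has its values at a fixed point of $I$ converging, so that the Arzel\`{a}--Ascoli theorem on compact subintervals together with the Fundamental Theorem of Calculus provides a limit $x_\ast\in W^{1,\infty}(I)$ with $\dot x_\ast=g$ to which the subsequence converges uniformly on every bounded subinterval. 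The main obstacle, and the step I expect to require the most care, is to upgrade this to convergence in $L^\infty(I)$ --- equivalently, to derive the uniform equimeasurability of $\sF$ on $I$ from $(i)$ and $(ii)$: for bounded $I$ this is automatic, since refining the partition furnished by $(ii)$ to pieces of small diameter and using the estimate $\abs{x(t)-x(s)}\leq\int_s^t\abs{\dot x(r)}\d r$ controls the oscillation of every $x\in\sF$; for unbounded $I$ one has to exploit that $\dot x$ already has small oscillation on each piece of the partition from $(ii)$ --- so that $x$ is essentially affine there up to a controlled error --- in order to keep the refined partition finite, and this is precisely where the hypotheses have to be used most delicately. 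Once $\sF$ is known to be relatively compact in $L^\infty(I)$, a final appeal to \lref{lemcomp} closes the argument.
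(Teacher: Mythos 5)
Your reduction via $j(x):=(x,\dot x)$ and the forward implication are correct, and in fact a little cleaner than the paper's hands-on $\eps/3$-covering argument for (ii). The problem is the converse, and it sits exactly at the step you yourself flag as delicate: upgrading convergence on bounded subintervals to convergence in $L^\infty(I)$, i.e.\ deriving relative compactness (equivalently, uniform equimeasurability) of $\sF$ itself from (i)--(ii) when $I$ is unbounded. This step cannot be carried out, because the implication is false there. Take $I=\R$ and $x_n(t):=\sin(t/n)$. Then $\norm{x_n}_\infty=1$ and $\norm{\dot x_n}_\infty=\tfrac1n$, so (i) holds. For (ii), given $\eps>0$ pick $N>2/\eps$: for $n>N$ the oscillation of $\dot x_n=\tfrac1n\cos(\cdot/n)$ on all of $\R$ is at most $\tfrac2n<\eps$, while for each $n\le N$ one partitions $\R$ into the preimages under $\dot x_n$ of finitely many pairwise disjoint intervals of length $<\eps$; the common refinement of these finitely many partitions is a finite partition witnessing (ii). Yet no subsequence of $(x_n)_{n\in\N}$ converges in $L^\infty(\R)$: every subsequence tends to $0$ pointwise, so $0$ is the only possible $L^\infty$-limit, but $\norm{x_n}_\infty=1$ for all $n$. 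Hence $\sF=\set{x_n:n\in\N}$ satisfies (i)--(ii) without being relatively compact in $W^{1,\infty}(\R)$ (not even in $L^\infty(\R)$); in particular your ``essentially affine on each piece'' refinement cannot yield a finite partition, since on an unbounded piece smallness of the oscillation of $\dot x$ gives no control whatsoever on the oscillation of $x$.

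So your argument is complete only for bounded $I$: there (i) gives a uniformly bounded, uniformly Lipschitz family, the Arzel\`a--Ascoli theorem yields relative compactness of $\sF$ in $C(\bar I)\subseteq L^\infty(I)$, and together with \lref{lemcomp} applied to $\dot\sF$ your closed-graph reduction finishes the proof. Your hesitation is, however, well placed rather than a mere technicality: the paper's own proof of $(\Leftarrow)$ has the same defect, since it claims that boundedness and equicontinuity alone give compactness of $\sF$ in $L^\infty(I)$ by appealing to \pref{propcom1}, which concerns $L_0^\infty(\R)$ and additionally requires uniform essential vanishing at infinity (and the paper's converse never even uses (ii)). That missing decay is precisely condition (iii) of \cref{corW1comb}, under which the conclusion is restored and your scheme goes through verbatim: (i) and (iii) give uniform equimeasurability of $\sF$, hence relative compactness of $\sF$ in $L^\infty(\R)$, while (i)--(ii) handle $\dot\sF$. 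As stated for arbitrary intervals, the ``if'' part needs either boundedness of $I$ or a hypothesis of this decay type; no completion of the step you left open can avoid that.
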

\begin{proof}
    $(\Rightarrow)$ If $\sF\subseteq W^{1,\infty}(I)$ is relatively compact, then (i) holds. In order to establish (ii) we note that for every $\eps>0$ there exist functions $x_1,\ldots,x_k\in\sF$ such that $\sF\subseteq\bigcup_{i=1}^kB_{\eps/3}(x_i)$. Consequently, one the one hand, for every $x\in\sF$ there exists an $i\in\set{1,\ldots,k}$ such that $\norm{x-x_i}_{1,\infty}<\tfrac{\eps}{3}$. On the other hand, there is a partition $\set{P_1^1,\ldots,P_{n_i}^i}$ of the interval $I$ such that $\abs{\dot x_i(t)-\dot x_i(s)}<\tfrac{\eps}{3}$ for a.a.\ $t,s\in P_i$ and 
    \begin{align*}
        \abs{\dot x(t)-\dot x(s)}
        &\leq
        \abs{\dot x(t)-\dot x_i(t)}+
        \abs{\dot x_i(t)-\dot x_i(s)}+
        \abs{\dot x_i(s)-\dot x(s)}\\
        &\leq
        2\norm{x-x_i}_{1,\infty}+
        \abs{\dot x_i(t)-\dot x_i(s)}
        <
        \tfrac{2\eps}{3}+\tfrac{\eps}{3}=\eps. 
    \end{align*}
    Then the existence of a finite partition $\set{P_1,\ldots,P_n}$ as claimed in (ii) results by refining the partitions $\set{P_1^i,\ldots,P_{n_i}^i}$, $i\in\set{1,\ldots,k}$.

    $(\Leftarrow)$ Due to condition (i) we have that $\sF$ is equicontinuous and bounded in $L^\infty(I)$ and \pref{propcom1} yields that $\sF$ is compact in $L^\infty(I)$. But this implies the relative compactness of $\sF$ in $W^{1,\infty}(I)$. 
\end{proof}

\begin{corollary}[compactness in $W_0^{1,\infty}(\R)$]
    \label{corW1comb}
    A subset $\sF\subseteq W_0^{1,\infty}(\R)$ is relatively compact, if and only if the following holds: 
    \begin{itemize}
        \item[(i)] $\sF$ is bounded, 

        \item[(ii)] for every $\eps>0$ there exists a partition $\set{P_1,\ldots,P_n}$ of $\R$ such that for each $j\in\set{1,\ldots,n}$ and a.a.\ $s,t\in P_j$ one has $\abs{\dot x(t)-\dot x(s)}<\eps$ for all $x\in\sF$, 

        \item[(iii)] for every $\eps>0$ there exists a $T_\eps>0$ such that for a.a.\ $t\in\R\setminus(-T_\eps,T_\eps)$ one has $\max\set{\abs{x(t)},\abs{\dot x(t)}}<\eps$ for all $x\in\sF$.
    \end{itemize}
\end{corollary}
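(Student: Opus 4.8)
The plan is to reduce the claim to \pref{propcom1}. Since the norm on $W_0^{1,\infty}(\R)$ is $\norm{x}_{1,\infty}=\max\set{\norm{x}_\infty,\norm{\dot x}_\infty}$, convergence in $W_0^{1,\infty}(\R)$ is the same as simultaneous convergence of $x_n$ and $\dot x_n$ in $L_0^\infty(\R)$, so the linear map $\iota(x):=(x,\dot x)$ identifies $W_0^{1,\infty}(\R)$ with a \emph{closed} subspace of $L_0^\infty(\R)\tm L_0^\infty(\R)$: if $x_n\to x$ and $\dot x_n\to g$ in $L_0^\infty(\R)$, then both converge in $L_{loc}^1(\R)$, whence $g$ is the weak derivative of $x$. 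Consequently $\sF$ is relatively compact in $W_0^{1,\infty}(\R)$ if and only if $\sF$ and its set of derivatives $\dot\sF:=\set{\dot x:x\in\sF}$ are both relatively compact in $L_0^\infty(\R)$.

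Given this, the necessity $(\Rightarrow)$ is read off from \pref{propcom1} directly: relative compactness of $\sF$ and $\dot\sF$ in $L_0^\infty(\R)$ gives (i), \pref{propcom1}(ii) for $\dot\sF$ gives (ii), and \pref{propcom1}(iii) for $\sF$ and for $\dot\sF$ gives (iii). For the sufficiency $(\Leftarrow)$ one verifies the three hypotheses of \pref{propcom1} separately for $\sF$ and for $\dot\sF$. For $\dot\sF$ this is a word-for-word translation: boundedness is (i), uniform equimeasurability is (ii), and uniform essential vanishing at infinity is the bound on $\abs{\dot x(t)}$ from (iii); since moreover $\dot\sF\subseteq L_0^\infty(\R)$ by definition of $W_0^{1,\infty}(\R)$, \pref{propcom1} applies. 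For $\sF$, boundedness is (i) and uniform essential vanishing at infinity is the bound on $\abs{x(t)}$ from (iii); the only substantive point is uniform equimeasurability of $\sF$ on all of $\R$. Here (i) yields a common Lipschitz constant $L:=\sup_{x\in\sF}\norm{\dot x}_\infty<\infty$ with $\abs{x(t)-x(s)}\leq L\abs{t-s}$, and given $\eps>0$ with $T_\eps$ from (iii) the finite partition consisting of the single unbounded piece $\R\setminus(-T_\eps,T_\eps)$ --- on which $\abs{x(t)-x(s)}\leq\abs{x(t)}+\abs{x(s)}<\eps$ a.e.\ --- together with finitely many subintervals of $[-T_\eps,T_\eps]$ of length $<\eps/L$ does the job simultaneously for all $x\in\sF$. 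Once $\sF$ and $\dot\sF$ are known to be relatively compact in $L_0^\infty(\R)$, a successive passage to subsequences combined with the closedness of $\iota(W_0^{1,\infty}(\R))$ recorded above completes the argument.

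I expect the construction of this \emph{finite} equimeasurability partition for $\sF$ to be the main obstacle, and it is exactly where (iii) is indispensable: it isolates the tail into one unbounded block and confines the only region where the Lipschitz estimate must be subdivided to a compact interval. Conditions (i)--(ii) alone do not suffice --- a $W^{1,\infty}(\R)$-bounded, uniformly Lipschitz family on $\R$ can fail to be relatively compact (for instance suitably widened, outward-translated tent functions satisfy (i)--(ii) but violate (iii) and have no convergent subsequence) --- so (iii) genuinely enters the proof rather than being a byproduct of (i)--(ii).
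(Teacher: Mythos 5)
Your proof is correct, but it takes a genuinely different route from the paper's. The paper proves sufficiency by first invoking \cref{corW1coma} with $I=\R$ to obtain relative compactness of $\sF$ in $W^{1,\infty}(\R)$ from (i)--(ii) alone, and only then uses (iii) to show that the limit of a convergent subsequence lies in the subspace $W_0^{1,\infty}(\R)$; necessity is delegated to the proofs of \cref{corW1coma} and \pref{propcom1}. You bypass \cref{corW1coma} entirely: identifying $W_0^{1,\infty}(\R)$ with a closed subspace of $L_0^\infty(\R)\tm L_0^\infty(\R)$ via $x\mapsto(x,\dot x)$, you apply \pref{propcom1} separately to $\sF$ and $\dot\sF$, the only substantive step being your explicit finite equimeasurability partition for $\sF$, assembled from the common Lipschitz constant on $[-T,T]$ and the tail bound (iii) outside. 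This buys two things. First, the argument is self-contained modulo \pref{propcom1}. Second, your closing observation is on target and in fact probes the weak point of the paper's route: (i)--(ii) alone do not force relative compactness in $W^{1,\infty}(\R)$ --- your outward-translated, widened tent functions satisfy (i)--(ii) (their derivatives tend uniformly to zero, so a finite refinement of level-set partitions handles the finitely many steep members) yet admit no $L^\infty(\R)$-convergent subsequence --- so the sufficiency direction of \cref{corW1coma} is problematic over the unbounded interval, whereas your proof uses (iii) already at the compactness stage and is therefore the safer derivation of the (correct) statement. Two cosmetic repairs: apply (iii) with $\eps/2$ so that the unbounded tail piece yields $\abs{x(t)-x(s)}<\eps$ rather than $2\eps$, and dispose of the degenerate case $L=0$ (all members constant, hence zero) before subdividing $[-T,T]$ into intervals of length $<\eps/L$.
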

\begin{proof}
    $(\Rightarrow)$ If $\sF\subseteq W_0^{1,\infty}(\R)$ is relatively compact, then the statements (i) and (ii) result as in the proof of \cref{corW1coma}, while (iii) is established as in the proof of \pref{propcom1}. 

    $(\Leftarrow)$ Using \cref{corW1coma} we conclude that $\sF$ is relatively compact in $W^{1,\infty}(\R)$. Then, given a sequence $(x_n)_{n\in\N}$ in $\sF$,  there exists a convergent subsequence $(x_{k_n})_{n\in\N}$, i.e.\ there is a $x\in W^{1,\infty}(\R)$ with $\lim_{n\to\infty}\norm{x_{k_n}-x}_{1,\infty}=0$. Given $\eps>0$ there is a $N\in\N$ with $\norm{x_{k_n}-x}_{1,\infty}<\tfrac{\eps}{2}$ for all $n\geq N$ and a $T_\eps>0$ with $\max\set{\abs{x_{k_n}(t)},\abs{\dot x_{k_n}(t)}}<\tfrac{\eps}{2}$ for a.a.\ $\abs{t}\geq T_\eps$. In conclusion, 
    \begin{align*}
        \abs{x(t)}
        &\leq
        \abs{x(t)-x_{k_n}(t)}+\abs{x_{n_k}(t)}
        <
        \eps\quad\text{for a.a.\ }t\in\R\setminus(-T_\eps,T_\eps)
    \end{align*}
    and similarly for $\dot x$. Therefore, $x\in W_0^{1,\infty}(\R)$ is verified. 
\end{proof}

Having established this, our further approach crucially depends on notions from Topological Dynamics collected in App.~\ref{appA}. Thereto, rather than the Carath{\'e}odory equation \eqref{cde} we consider its equation of perturbed motion w.r.t.\ the branch $(\phi_\lambda)_{\lambda\in\Lambda}$, namely
\begin{equation}
    \dot x=\tilde f(t,x,\lambda)
    \label{cdep}
\end{equation}
having the right-hand side $\tilde f:\Omega_0\to\R^d$, 
$$
    \tilde f(t,x,\lambda):=f(t,x+\phi_\lambda(t),\lambda)-f(t,\phi_\lambda(t),\lambda)
$$
defined on $\Omega_0:=\set{(t,x,\lambda)\in\R\tm\R^d\tm\Lambda \mid\,x+\phi_\lambda(t)\in\Omega}$ and the trivial solution. 
%\comment{CP: Does this definition of the domain for $\tilde f$ make sense?
%\newline \textcolor{red}{RS: I replaced $\R\times\Omega_0$ by $\Omega_0$}
%\newline CP: added $\R\tm$
%\newline IPL: can we remove the "for all $t\in\R$"? It sounds somewhat cumbersome since the triple $(t,x,\lambda)\in\R\tm\R^d\tm\Lambda$ appears on the left. 
%\newline CP: I would say yes and corrected!
%}

In this setting, the \emph{hull} of a Carath{\'e}odory equation \eqref{cdep} depending on $\lambda\in\Lambda$ is denoted as $\sH(\lambda)$, while $\alpha(\lambda),\omega(\lambda)\subseteq\sH(\lambda)$ abbreviate the corresponding $\alpha$- resp.\ $\omega$-limit sets. With fixed $\lambda\in\Lambda$, a subset $\sG\subseteq\sH(\lambda)$ is said to be \emph{admissible}, if the following holds:
\begin{itemize}
	\item $Z_\sG:=\set{x\in\R^d\mid\exists g\in\sG:\,g(t,x)=0\text{ for a.a.\ }t\in\R}$ is compact and totally disconnected, 

	\item for each function $g\in\sG$ the set $\{\phi\in L^\infty(\R)\mid\phi\text{ is strongly differentiable with }$ $\dot\phi(t) =g(t,\phi(t))\text{ a.e.\ in }\R\}$ consists only of constant functions.
\end{itemize}

In the proof of \pref{proppropg} below, the inclusion $0\in Z_\sG$ will be important, i.e.\ whether the zero solution solves $\dot x =g(t,x)$ with $g\in\sH(\lambda)$. This issue is tackled in
\begin{lemma}
    \label{rmk:null-sol-hull}   
    For all $\lambda\in\Lambda$ and $g\in\sH(\lambda)$ one has $g(t,0)=0$ a.e.\ in $\R$.
\end{lemma}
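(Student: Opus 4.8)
The plan is to use that the zero function is a trivial solution of the equation of perturbed motion \eqref{cdep} and that this feature is preserved under the hull construction. First I would record the elementary identity
$$
	\tilde f(t,0,\lambda)=f(t,\phi_\lambda(t),\lambda)-f(t,\phi_\lambda(t),\lambda)=0
	\quad\text{for a.a.\ }t\in\R,
$$
which gives at once $(S^\tau\tilde f(\cdot,\lambda))(t,0)=\tilde f(t+\tau,0,\lambda)=0$ for a.a.\ $t\in\R$ and every $\tau\in\R$; equivalently, $\int_a^b(S^\tau\tilde f(\cdot,\lambda))(s,0)\d s=0$ for every bounded interval $[a,b]\subset\R$ and every $\tau\in\R$. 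Note that $(t,0)$ always lies in the $\lambda$-section of $\Omega_0$ because $\phi_\lambda(t)\in\Omega$, and that the permanence assumption \eqref{nop} ensures that $x=0$ remains admissible for every limit function arising in the hull.

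Next I would pass to the hull. By construction (see App.~\ref{appA}, following \cite{thesis:Longo}), $\sH(\lambda)$ is the closure of the translation orbit $\set{S^\tau\tilde f(\cdot,\lambda)\mid\tau\in\R}$ in the relevant topology on Carath\'eodory functions, so a given $g\in\sH(\lambda)$ is approximated by some sequence $S^{\tau_n}\tilde f(\cdot,\lambda)$. The key point is the defining property of that topology: convergence in $\sH(\lambda)$ forces $\int_a^b(S^{\tau_n}\tilde f(\cdot,\lambda))(s,x)\d s\to\int_a^bg(s,x)\d s$ for each fixed admissible $x$ and each bounded interval $[a,b]\subset\R$. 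Evaluating at $x=0$, the left-hand side vanishes for all $n$, so $\int_a^bg(s,0)\d s=0$ for all $a<b$; since $g(\cdot,0)\in L_{loc}^1(\R)$, a locally integrable function whose integral over every interval vanishes must be zero a.e., which yields $g(t,0)=0$ for a.a.\ $t\in\R$.

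An essentially equivalent alternative is to argue with solutions rather than integrals: $x\equiv 0$ solves $\dot x=\tilde f(t,x,\lambda)$, the skew-product flow over $\sH(\lambda)$ depends continuously on the base point, hence $x\equiv 0$ stays a solution of $\dot x=g(t,x)$ for every $g\in\sH(\lambda)$, and a constant solution forces $g(t,0)=0$ a.e. The only genuine obstacle is bookkeeping: one has to be sure that the mode of convergence defining $\sH(\lambda)$ really does pass to the integrals $\int_a^b g_n(s,x)\d s$ evaluated at a single point $x$ --- this is precisely what the topology developed in App.~\ref{appA} is tailored to do --- and that $x=0$ lies in the domain common to all members of the hull, which \eqref{nop} guarantees. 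Everything else is routine.
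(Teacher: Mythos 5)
Your argument is essentially the paper's own proof: both exploit $\tilde f(t,0,\lambda)=0$ a.e., pass the vanishing of $\int_{q_1}^{q_2}S^{t_n}\tilde f(t,0,\lambda)\d t$ to the limit using the defining seminorms of $\sigma_\Q$ (where $x=0$ is an admissible rational test point), extend to arbitrary bounded intervals by density of $\Q$ and absolute continuity of the integral, and conclude $g(\cdot,0)=0$ a.e. Your explicit final appeal to the fact that a locally integrable function with vanishing integrals over all intervals is zero a.e.\ only makes precise what the paper leaves implicit, so the proposal is correct and follows the same route.
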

\begin{proof}
    Let $\lambda\in\Lambda$ be fixed. Above all, the equation of perturbed motion \eqref{cdep} has the trivial solution. We establish that it also solves $\dot x=g(t,x)$ for $g\in\sH(\lambda)$. On the one hand, this is evident for any $S^s\tilde f(\cdot,\lambda)\in\sH(\lambda)$ with $s\in\R$, where $S^s\tilde f(t,x,\lambda):=\tilde f(t+s,x,\lambda)$. On the other hand, if $g\in\sH(\lambda)$, there is $(t_n)_{n\in\N}$ such that $f_n\xrightarrow{\sigma_\Q}g$ (cf.~App.~\ref{appA}), that is, for any interval $[q_1,q_2]\subset\R$, $q_1,q_2\in\Q$, 
    \[
        \int_{q_1}^{q_2} g(t,0)\d t
        =
        \lim_{n\to\infty}\int_{q_1}^{q_2}S^{t_n}\tilde f(t,0,\lambda)\d t
        =
        0. 
    \]
    The above formula can be extended to any bounded interval in $\R$ due to the additivity and absolute continuity of Lebesgue's integral and the density of $\Q$ in $\R$. As a consequence, the identically null function is also a solution for $\dot x =g(t,x)$ for all $g\in\sH(\lambda)$. 
\end{proof}

\begin{proposition}
	\label{proppropg}
    Let $\lambda\in\Lambda$. If $\alpha(\lambda)\cup\omega(\lambda)\subseteq\sH(\lambda)$ is admissible, then the mapping $G(\cdot,\lambda):W_0^{1,\infty}(\R)\to L_0^\infty(\R)$ is proper on all bounded, closed subsets of $W_0^{1,\infty}(\R)$. 
\end{proposition}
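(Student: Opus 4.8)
The plan is to prove properness in sequential form. Fix a bounded, closed set $B\subseteq W_0^{1,\infty}(\R)$ and a compact $\sK\subseteq L_0^\infty(\R)$; it suffices to show that every sequence $(x_n)_{n\in\N}$ in $B$ with $G(x_n,\lambda)\in\sK$ admits a subsequence converging in $W_0^{1,\infty}(\R)$ to some $x\in B$ with $G(x,\lambda)\in\sK$, since the set in question is closed (as $B$ and $\sK$ are closed and $G(\cdot,\lambda)$ is continuous by \tref{thmgprop}) and this then yields its sequential, hence topological, compactness. Passing to a subsequence we may assume $g_n:=G(x_n,\lambda)\to y$ in $L^\infty(\R)$ with $y\in\sK\subseteq L_0^\infty(\R)$. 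By \eqref{gdef} one has $\dot x_n(t)=\tilde f(t,x_n(t),\lambda)+g_n(t)$ for a.a.\ $t\in\R$, where $\tilde f$ is the right-hand side of \eqref{cdep}; moreover, thanks to the bound on $B$, the boundedness of $\phi_\lambda$, the convexity of $\Omega$ and \eqref{nop}, there is a compact $K\subset\Omega$ with $x_n(t)+\phi_\lambda(t)\in K$ for all $t\in\R$, $n\in\N$, and a common Lipschitz constant $L$ for all the (continuous representatives of the) $x_n$. In particular $\{x_n\}\subseteq C_0(\R)=C_{\set 0}$ is bounded and uniformly equicontinuous.

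The heart of the matter is to show that $\{x_n\}$ is relatively compact in $C_0(\R)$, which I obtain from the ``in particular'' part of \lref{lem:comp_in_C_Z} applied with $Z:=Z_{\alpha(\lambda)\cup\omega(\lambda)}$; this set is compact and totally disconnected by admissibility and contains $0$ by \lref{rmk:null-sol-hull}. Since conditions $(i)$, $(ii)$ of \lref{lem:comp_in_C_Z} are already in hand, it remains to verify $(iii)$: if $|t_k|\to\infty$ and $\bar x(s):=\lim_{k\to\infty}(S^{t_k}x_{n_k})(s)$ exists for all $s\in\R$ and defines $\bar x\in BC(\R)$, then $\bar x(\R)\subseteq Z$. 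The shifted functions $z_k:=S^{t_k}x_{n_k}$ are uniformly $L$-Lipschitz and uniformly bounded, so the pointwise convergence $z_k\to\bar x$ is in fact locally uniform and $\bar x$ is Lipschitz; furthermore $\dot z_k(s)=[S^{t_k}\tilde f(\cdot,\lambda)](s,z_k(s))+(S^{t_k}g_{n_k})(s)$ a.e. Since $g_{n_k}\to y$ in $L^\infty(\R)$ with $y\in L_0^\infty(\R)$ and $|t_k|\to\infty$, the inhomogeneity $S^{t_k}g_{n_k}\to 0$ in $L^1_{\loc}(\R)$; by compactness of the hull $\sH(\lambda)$ and $|t_k|\to\infty$ (see App.~\ref{appA}) a subsequence of $S^{t_k}\tilde f(\cdot,\lambda)$ converges in the hull topology to some $h\in\omega(\lambda)$ if $t_k\to+\infty$, respectively $h\in\alpha(\lambda)$ if $t_k\to-\infty$; passing to the limit in the associated Volterra integral equations by the continuous dependence of Carath\'eodory solutions on the right-hand side (App.~\ref{appA}) shows that $\bar x$ is a bounded entire solution of $\dot x=h(t,x)$. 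Admissibility of $\alpha(\lambda)\cup\omega(\lambda)$ forces $\bar x\equiv c$ for a constant $c$, hence $h(t,c)=0$ for a.a.\ $t$, i.e.\ $c\in Z_{\set h}\subseteq Z$, so that $\bar x(\R)=\set c\subseteq Z$, as required. Thus \lref{lem:comp_in_C_Z} yields a subsequence, not relabeled, with $x_n\to x$ in $C_0(\R)$.

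It remains to upgrade this to convergence in $W_0^{1,\infty}(\R)$. Because $\tilde f(t,\cdot,\lambda)$ is Lipschitz on $K$ with constant $m_K(\lambda)$ (Hypothesis~$(H_0)$), one has $\norm{\tilde f(\cdot,x_n(\cdot),\lambda)-\tilde f(\cdot,x(\cdot),\lambda)}_\infty\leq m_K(\lambda)\norm{x_n-x}_\infty\to 0$, hence $\dot x_n=\tilde f(\cdot,x_n(\cdot),\lambda)+g_n\to\tilde f(\cdot,x(\cdot),\lambda)+y=:w$ in $L^\infty(\R)$. Passing to the limit in $x_n(t)=x_n(0)+\int_0^t\dot x_n(s)\d s$ shows that $x$ is absolutely continuous with $\dot x=w$, so $x\in W^{1,\infty}(\R)$ and $x_n\to x$ in $W^{1,\infty}(\R)$. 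Since $x\in C_0(\R)$ and $\tilde f(t,0,\lambda)\equiv 0$, the estimate $\abs{\tilde f(t,x(t),\lambda)}\leq m_K(\lambda)\abs{x(t)}\to 0$ together with $y\in L_0^\infty(\R)$ gives $w\in L_0^\infty(\R)$, whence $x\in W_0^{1,\infty}(\R)$ and $x_n\to x$ in $W_0^{1,\infty}(\R)$; as $B$ is closed, $x\in B$, and $G(x,\lambda)=\lim_{n\to\infty}G(x_n,\lambda)=y\in\sK$ by \tref{thmgprop}. This proves the claim.

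The step I expect to be the main obstacle is the verification of condition $(iii)$ in \lref{lem:comp_in_C_Z}: this is precisely the point where one must leave the purely functional-analytic framework and exploit the Bebutov flow over the hull --- compactness of $\sH(\lambda)$, the identification of limits of time-translates of $\tilde f(\cdot,\lambda)$ with elements of $\alpha(\lambda)$ respectively $\omega(\lambda)$, the continuous dependence of solutions on the right-hand side, and finally the admissibility hypothesis to collapse the limiting profile to a constant belonging to $Z$. Once this uniform-in-$n$ control at infinity is available, the remaining steps (the Arzel\`a--Ascoli reductions on compact intervals and the Lipschitz bootstrap for $\dot x_n$) are comparatively routine.
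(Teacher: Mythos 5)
Your proposal is correct and follows essentially the same route as the paper: reduce properness on bounded closed sets to a sequential criterion, verify condition $(iii)$ of \lref{lem:comp_in_C_Z} with $Z=Z_{\alpha(\lambda)\cup\omega(\lambda)}$ by shifting along $|t_k|\to\infty$, identifying the limiting right-hand side in $\alpha(\lambda)\cup\omega(\lambda)$ via the Bebutov flow and invoking admissibility, and then bootstrap from $C_0(\R)$- to $W_0^{1,\infty}(\R)$-convergence through the Lipschitz bound and the integral equation. The only point to make precise is that ``continuous dependence on the right-hand side'' in your limit passage is not a generic fact for the pointwise topology $\sigma_\Q$ but rests on \lref{lem:equiv-top} (equivalence with $\sigma_\Theta$ on the hull, thanks to the essentially bounded $l$-bounds), which is exactly how the paper justifies $\int_I f_n(t,\xi_n(t))\d t\to\int_I f_0(t,\bar x(t))\d t$.
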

\begin{proof}
    The proof follows the arguments yielding \cite[Lemma~2.2]{poetzsche:skiba:20b}, but some steps need to be adjusted to the present more general setting of $W_0^{1,\infty}(\R)$ and $L_0^\infty(\R)$. 
    We neglect the dependence on the fixed $\lambda\in\Lambda$ in our notation and simply write $f$ for $\tilde f$. Due to \lref{lemcharprop} with spaces $X=W_0^{1,\infty}(\R)$ and $Y=L_0^\infty(\R)$, the claim, namely $G(\cdot,\lambda)|_{\sB}$ is proper on each bounded, closed $\sB\subset W_0^{1,\infty}(\R)$, is equivalent to the implication
	\begin{multline*}
		\text{$(G(x_n))_{n\in\N}$ converges in $L_0^\infty(\R)$ for some bounded sequence $(x_n)_{n\in\N}$ in $W_0^{1,\infty}(\R)$}\\
		\Rightarrow\text{$(x_n)_{n\in\N}$ has a convergent subsequence in $W_0^{1,\infty}(\R)$.}
	\end{multline*}
	Accordingly, let $(x_n)_{n\in\N}$ be a bounded sequence in $W_0^{1,\infty}(\R)$ such that $(G(x_n))_{n\in\N}$ converges in $L_0^\infty(\R)$ to some function $y$. 
 We thus need to show the existence of a convergent subsequence $(x_{n_k})_{k\in\N}$ in $W_0^{1,\infty}(\R)$. 
 We start by proving the existence of a convergent subsequence in $C_0(\R)\supset W_0^{1,\infty}(\R)$ by means of \lref{lem:comp_in_C_Z} with $\sF:=\set{x_n}_{n\in\N}$. Thereto, we aim to prove that the points (i--iii) in \lref{lem:comp_in_C_Z} are satisfied. The assumption of boundedness of $(x_n)_{n\in\N}$ in $W_0^{1,\infty}(\R)$ implies that there exists $R\geq 0$ with
	\begin{equation}
		\max\{\abs{x_n(t)},|\dot x_n(t)|\}< R\fall n\in\N, \text{ and a.a. }t\in\R.
		\label{lemproperO1}
	\end{equation}
	\underline{ad (i)}: From \eqref{lemproperO1} one obtains $\{x_n\mid n\in\N\}$ is a bounded subset in $C_0(\R)\subset L^\infty_0(\R)$.\\
	\underline{ad (ii)}: The mean value estimate implies
	\begin{equation}
		\abs{x_n(t)-x_n(s)}\stackrel{\eqref{lemproperO1}}{\leq}R\abs{t-s}\fall n\in\N,\,t,s\in\R
		\label{nouniequi}
	\end{equation}
	and therefore $\sF$ is uniformly equicontinuous.\\
	\underline{ad (iii)}: $Z:=\set{x\in\R^d\mid\exists g\in\alpha(\lambda)\cup\omega(\lambda):\,g(t,x)\equiv 0\text{ on }\R}$ is  compact and totally disconnected by the admissibility assumption, while $0\in Z$ as clarified in \lref{rmk:null-sol-hull}.

We choose a sequence in $\sF\subset C_{\{0\}}(\R)\subset C_{Z}(\R)$, which clearly is a subsequence of $(x_n)_{n\in\N}$ and w.l.o.g.\ denoted as $(x_n)_{n\in\N}$ again. 
For a real sequence $(s_n)_{n\in\N}$ with $\abs{s_n}\to\infty$, let us suppose that $\xi_n:=S^{s_n}x_n\in W_0^{1,\infty}(\R)$ converges pointwise to some function $\bar x\in BC(\R)$. 
We now show that $\bar x\in C_Z(\R)$ and $\bar x(\R)\subset Z$. Abbreviating the shifts $f_n:=S^{s_n}f$, 
%and $F_n\colon W_0^{1,\infty}(\R)\to L^\infty_0(\R)$, $F_n(x):=f_n(\cdot,x(\cdot))$,
	we obtain
	\begin{align*}
		\dot\xi_n(t)-f_n(t,\xi_n(t))
		&\equiv
		\dot x_n(t+s_n)-f(t+s_n, x_n(t+s_n))
		\equiv
		G(x_n)(t+s_n)\on\R
	\end{align*}
	and consequently
	\begin{equation}
		%\dot\xi_n=F_n(\xi_n)+S^{s_n}G(x_n)
        \dot\xi_n(t)=f_n(t,\xi_n(t))+S^{s_n}G(x_n)(t)
		\fall n\in\N \text{ and a.a. }t\in\R.
		\label{no36}
	\end{equation}
 
	(I) Claim: \emph{There exists some $f_0\in\LC$ such that for any bounded interval $I\subset\R$, the following convergence holds true,}
 \[
 \lim_{n\to\infty}\left|\int_I\big(f_n(t,\xi_n(t))-f_0(t,\bar x(t))\big)\d t \right|=0.
 \]
 \\
	First, $(\xi_n)_{n\in\N}$ in $W_0^{1,\infty}(\R)$ is bounded and like in \eqref{nouniequi} also uniformly equicontinuous. 
 Hence, on every compact subset $J\subset\R$ the Ascoli--Arzel{\'a} theorem (see \cite[p.~95, 1.19c]{zeidler:95}) applies and $(\xi_n)_{n\in\N}$ converges compactly, up to a subsequence, to $\bar x$. Second, due to \lref{lem:Top-dyn-Hsigma} there is $f_0\in\LC$ such that, up to a subsequence, $(f_n)_{n\in\N}$ converges to $f_0\in \alpha(\lambda)\cup\omega(\lambda)$ in $(\LC,\sigma_\Q)$, that is, the following holds true
  \[
 \lim_{n\to\infty}\left|\int_I\big(f_n(t,x)-f_0(t,x)\big)\d t \right|=0\quad \text{for all } x\in \Q^d,
 \]
for all $I=[q_1,q_2]$, with $q_1,q_2\in\Q$. On the other hand, \lref{lem:equiv-top} guarantees that for all $I=[q_1,q_2]$, with $q_1,q_2\in\Q$,
\begin{equation}\label{eq:21/02-16:54}
 \lim_{n\to\infty}\sup_{x\in \sK^I}\left|\int_I\big(f_n(t,x(t))-f_0(t,x(t))\big)\d t \right|=0,
\end{equation}
where $\sK=\{\xi_n|_I\mid n\in\N\}\cup\{\bar x|_I\}$ is compact in $C(I)$ as proved above. Note that in fact \eqref{eq:21/02-16:54} immediately extends to any bounded interval $I\subset \R$ thanks to the additivity and absolute continuity of Lebesgue's integral and the density of $\Q$ in $\R$.
%Choose $\rho>0$ so large that $\bar B_\rho(0)\subset\R^d$ contains the ranges of $\xi_n$ and $\bar x$. 
Now consider 
\[
\begin{split}
  & \left|\int_I\big(f_n(t,\xi_n(t))-f_0(t,\bar x(t))\big)\d t \right|\\
  &\qquad\qquad\le \left|\int_I\big(f_n(t,\xi_n(t))-f_n(t,\bar x(t))\big)\d t \right|+\left|\int_I\big(f_n(t,\bar x(t))-f_0(t,\bar x(t))\big)\d t \right|\\
  &\qquad\qquad\le \int_I\left|\big(f_n(t,\xi_n(t))-f_n(t,\bar x(t))\big)\right|\d t +\left|\int_I\big(f_n(t,\bar x(t))-f_0(t,\bar x(t))\big)\d t \right|\\
  &\qquad\qquad\le m_R\lambda_1(I)\norm{\xi_n-\bar x}_{L^\infty(I)}+\left|\int_I\big(f_n(t,\bar x(t))-f_0(t,\bar x(t))\big)\d t \right|,
\end{split}
\]
where $m_R$ is the common Lipschitz coefficient in $x$ on the ball $\bar B_R(0)$ for all the functions in $\{f_n\mid n\in\N\}\cup\{f_0\}\subset\LC$ (which particularly is a subset of the hull of $f$). Such a common constant $m_R$ exists thanks to~\lref{lem:Top-dyn-Hsigma}(a) given the assumption \eqref{h0b}.
Taking the limit as $n\to\infty$ on both sides of the previous chain of inequalities, we obtain the claimed assertion thanks to the uniform convergence on compact intervals of  $(\xi_n)_{n\in\N}$ to $\bar x$ and thanks to \eqref{eq:21/02-16:54}.

    (II) Claim: \emph{For every $I\subset\R$ bounded, $\norm{S^{s_n}G(x_n)}_{L^\infty(I)}\to0$ as $n\to\infty$.}\\
	This follows readily from the inequality
\begin{equation*}
\abs{S^{s_n}G(x_n)(t)}\leq\abs{G(x_n)(t+s_n)-y(t+s_n)}+\abs{y(t+s_n)}\quad\text{for a.a.~} t\in\R,
\end{equation*}
where $y$ is the limit of $(G(x_n)_{n\in\N}$ in $L^\infty_0(\R)$ as fixed at the beginning of the proof.\\
	\indent (III) Now consider $t\in\R$ and $h>0$. Recall that since $\xi_n\in W_0^{1,\infty}(\R)$ for all $n\in\N$, they are absolutely continuous on bounded intervals and satisfy the Fundamental Theorem of Calculus \cite[p.~85, Thm.~3.30]{leoni:09}. Then, using also \eqref{no36}, we can write,
 \[
 \begin{split}
     \xi_n(t+h)-\xi_n(t)=\int_t^{t+h}\dot \xi_n(s)\d s=\int_t^{t+h}f_n(s,\xi_n(s))\d s+\int_t^{t+h}S^{s_n}G(x_n)(s)\d s.
 \end{split}
 \]
Taking the limit as $n\to \infty$ on both sides of the previous formula, and using (I), (II), and the fact that $(\xi_n)_{n\in\N}$ converges compactly to $\bar x$, we obtain
\[
    \bar x(t+h)-\bar x(t)=\int_t^{t+h}f_0(s,\bar x(s))\d s.
\]
Thus, dividing both sides by $h$ and taking the limit as $h\to0$, we have that $\bar x$ is differentiable almost everywhere and moreover solves the Carath\'eodory equation $\dot x= f_0(t,x)$. In fact, the solution identity $\dot{\bar x}(t)\equiv f_0(t,\bar x(t))$ a.e.~on $\R$ even guarantees $\bar x\in W^{1,\infty}(\R)$ by $(H_0)$.
Thus, the assumed admissibility of $\alpha(\lambda)\cup\omega(\lambda)$ enforces the function $\bar x$ to be a constant $x_0\in\R^d$ and consequently
$$
    f_0(t,x_0)\equiv f_0(t,\bar x(t))\equiv\dot{\bar x}(t)\equiv 0 \text{ for a.a. } t\in\R.
$$
Hence, by definition it is $\bar x(\R)\subseteq Z$.\newline\indent 
In summary, we verified the conditions (i--iii) of \lref{lem:comp_in_C_Z} and thus the sequence $(x_{n})_{n\in\N}$ converges, up to a subsequence,  in $C_0(\R)$ to a function $\widetilde{x}$. 
It remains to show convergence in the $W^{1,\infty}(\R)$-topology. On the one hand,  since for a.a.~$t\in\R$ the function $f$ is Lipschitz continuous with respect of $x$ (cf.~\eqref{h0b}), then for a.a.~$t\in\R$ one has
\begin{align*}
|f(t, x_n(t))-f(t, \widetilde{x}(t))|\le m_R\abs{x_n(t)-\widetilde x(t)}\le m_R\norm{x_n-\widetilde x}_\infty.
\end{align*}
 On the other hand, since 
  \[
 \begin{split}
     x_n(t+h)-x_n(t)=\int_t^{t+h}\dot x_n(s)\d s=\int_t^{t+h}f(s,x_n(s))\d s+\int_t^{t+h}G(x_n)(s)\d s
 \end{split}
 \]
 and since $(G(x_n))_{n\in\N}$ is assumed to converge uniformly to $y$ in $L^\infty_0(\R)$, reasoning as before, we conclude that $\widetilde x$ is differentiable almost everywhere and the derivative $\dot {\widetilde x}$ satisfies $\dot{\widetilde x}(t)=f(t,\widetilde{x}(t))+y(t)$ for a.a.~$t\in\R$. Finally, due to the above considerations, we have that for a.a.~$t\in\R$
 \[
 \begin{split}
      |\dot x_n(t)-\dot {\widetilde x}(t)|&\le |f(t,x_n(t))-f(t,\widetilde{x}(t))|+|G(x_n)(t)-y(t)|\\
      &\le m_R\norm{x_n-\widetilde x}_\infty+\norm{G(x_n)-y}_\infty,
 \end{split}
 \]
which implies that $(\dot x_n)_{n\in\N}$ converges to $\dot {\widetilde x}$ in $L^\infty_0(\R)$, and thus, $(x_n)_{n\in\N}$ converges to $\widetilde{x}$ in $W^{1,\infty}_0(\R)$. 
	This completes the proof.
\end{proof}

\begin{proposition}[properness]
	\label{propprop}
    If $\alpha(\lambda)\cup\omega(\lambda)\subseteq\sH(\lambda)$ is admissible for all $\lambda\in\Lambda$, then the operator $G:U\to L_0^\infty(\R)$ is proper on every product $\sB\tm\Lambda_0\subset U$ with bounded, closed $\sB\subset W_0^{1,\infty}(\R)$ and  $\Lambda_0\subseteq\Lambda$. 
 %  \comment{RS: I added the symbol $\subset U$
  %  \newline CP: Good!}
\end{proposition}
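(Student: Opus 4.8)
\emph{Proof strategy.} The plan is to reduce the parametrized claim to the fixed-parameter statement \pref{proppropg}, treating the dependence on $\lambda$ as a uniformly small perturbation rather than redoing the Bebutov-flow argument over parameter-dependent hulls. First I would invoke the sequential characterization of properness \lref{lemcharprop} (on the closed set $\sB\tm\Lambda_0$, with $X=W_0^{1,\infty}(\R)$ and $Y=L_0^\infty(\R)$): it suffices to show that every sequence $(x_n,\lambda_n)_{n\in\N}$ in $\sB\tm\Lambda_0$ for which $(G(x_n,\lambda_n))_{n\in\N}$ converges in $L_0^\infty(\R)$, say to $y$, admits a subsequence converging in $\sB\tm\Lambda_0$. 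Taking $\Lambda_0$ compact (the relevant case; for a general subinterval one restricts to compact subintervals), we may pass to a subsequence with $\lambda_n\to\lambda_0\in\Lambda_0$. Since $\sB\tm\Lambda_0\subseteq U$ one has $(x_n,\lambda_0)\in U$, so $G(x_n,\lambda_0)$ is defined and lies in $L_0^\infty(\R)$ by \tref{thmgprop}.

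The second step is the perturbation estimate. Setting $r_n:=G(x_n,\lambda_0)-G(x_n,\lambda_n)\in L_0^\infty(\R)$, the definition \eqref{gdef} writes $r_n(t)$, for a.a.\ $t\in\R$, as a sum of two differences of values of $f$ whose space and parameter arguments differ only by $(\phi_{\lambda_0}(t)-\phi_{\lambda_n}(t),\,\lambda_0-\lambda_n)$; all these arguments stay in $\Omega$ because $\sB\tm\Lambda_0\subseteq U$ and the $\phi_\lambda$ are $\Omega$-valued. Invoking the uniform continuity hypothesis \eqref{h0c} with $j=0$, together with the uniform-in-$t$ convergence $\sup_{t\in\R}\abs{\phi_{\lambda_n}(t)-\phi_{\lambda_0}(t)}\to0$ supplied by $(H_1)$, I would conclude $\norm{r_n}_\infty\to0$, whence $G(x_n,\lambda_0)=G(x_n,\lambda_n)+r_n\to y$ in $L_0^\infty(\R)$. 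Now \pref{proppropg} applies at the single parameter $\lambda_0$ --- its hypothesis that $\alpha(\lambda_0)\cup\omega(\lambda_0)$ is admissible being part of the present assumption: the bounded sequence $(x_n)_{n\in\N}$ in $W_0^{1,\infty}(\R)$ is mapped by $G(\cdot,\lambda_0)$ into a relatively compact set, hence has a subsequence $x_{n_k}\to\tilde x$ in $W_0^{1,\infty}(\R)$, with $\tilde x\in\sB$ since $\sB$ is closed. Combined with $\lambda_{n_k}\to\lambda_0$ this gives $(x_{n_k},\lambda_{n_k})\to(\tilde x,\lambda_0)\in\sB\tm\Lambda_0$, and continuity of $G$ yields $G(\tilde x,\lambda_0)=y$; this is the desired convergent subsequence.

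The bulk of the work sits in \pref{proppropg}; on top of it, the only genuinely new ingredient is the uniform bound $\norm{r_n}_\infty\to0$, and that is where I expect the (mild) subtlety to lie: one must use that \eqref{h0c} is uniform in $t$ \emph{and} that the branch $(\phi_\lambda)_{\lambda\in\Lambda}$ converges uniformly in $t$ --- not merely locally --- as guaranteed by $(H_1)$, so that $r_n$ is small in the full $L^\infty(\R)$-norm, not just on compact time intervals. The remaining point to keep in mind is that compactness of $\Lambda_0$ is what permits the extraction $\lambda_n\to\lambda_0\in\Lambda_0$; for an arbitrary subinterval the conclusion should be read as properness on $\sB\tm\Lambda_1$ for every compact $\Lambda_1\subseteq\Lambda_0$, which is all that the subsequent global bifurcation results require.
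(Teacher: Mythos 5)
Your proposal is correct and is essentially the paper's argument: the paper verifies the two hypotheses of the abstract criterion \lref{lemproper} --- fixed-parameter properness via \pref{proppropg} and equicontinuity of $\lambda\mapsto G(x,\lambda)$ uniformly over $\sB$ coming from \eqref{h0c} and the uniform convergence in $(H_1)$ --- and your sequential extraction together with the perturbation estimate $\norm{r_n}_\infty\to0$ is exactly the proof of that lemma written out inline via \lref{lemcharprop}. Your closing remark that compactness of $\Lambda_0$ is what allows extracting $\lambda_{n}\to\lambda_0$ matches the paper, whose appeal to \lref{lemproper} likewise requires compact parameter sets.
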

\begin{proof}
    In order to apply \lref{lemproper} with $X=W_0^{1,\infty}(\R)$ and $Y:=L_0^\infty(\R)$, we note that \tref{thmgprop} yields the continuity of $G$. First, given a bounded subset $\sB\subset W_0^{1,\infty}(\R)$, $\lambda_0\in\Lambda$ and $\eps>0$, we note that $(H_1)$ yields that there exists a $\delta>0$ such that
    $$
        \norm{G(x,\lambda)-G(x,\lambda_0)}_\infty
        \stackrel{\eqref{gdef}}{=}
        \esssup_{t\in\R}\abs{f(t,x(t),\lambda)-f(t,x(t),\lambda_0)}
        <
        \eps\fall x\in\sB
    $$
    and $\lambda\in B_\delta(\lambda_0)\cap\Lambda$. Thus, the set $\set{G(x,\cdot):\Lambda\to L_0^\infty(\R)\mid x\in\sB}$ is equicontinuous. Second, \pref{proppropg} ensures that each $G(\cdot,\lambda):W_0^{1,\infty}(\R)\to L_0^\infty(\R)$, $\lambda\in\Lambda$, is proper on closed, bounded subsets of $W_0^{1,\infty}(\R)$. Then \lref{lemproper} concludes the proof, because the compact subsets of $\R$ are just the closed and bounded ones. 
\end{proof}
\section{Global Evans functions}
\label{sec4}
The \emph{variational equations} corresponding to the solution branch $(\phi_\lambda)_{\lambda\in\Lambda}$ required in Hypothesis~$(H_1)$ read as
\begin{equation}
	\tag{$V_\lambda$}
	\dot x=D_2f(t,\phi_\lambda(t),\lambda)x.
	\label{var}
\end{equation}
Keeping a parameter $\lambda\in\Lambda$ fixed, note that these linear problems are again well-posed thanks to Hypothesis~$(H_1)$, meaning that in particular the map $t\mapsto D_2f(t,\phi_\lambda(t),\lambda)$ is locally integrable. Therefore, for each initial time $\tau\in\R$ there exists a unique solution $\Phi_\lambda(\cdot,\tau):\R\to\R^{d\tm d}$ to the initial value problem $\dot X=D_2f(t,\phi_\lambda(t),\lambda)X$, $X(\tau)=I_d$ in $\R^{d\tm d}$. We denote  $\Phi_\lambda(t,s)\in GL(\R^d)$ for $t,s\in\R$ as \emph{transition matrix} of \eqref{var}.

A solution $\phi_{\lambda}$ to \eqref{cde} is understood as \emph{hyperbolic} on an unbounded interval $I\subseteq\R$, if the associated variational equation $(V_{\lambda})$ has an \emph{exponential dichotomy} on $I$. This means there exist reals $K\geq 1$, $\alpha>0$ and a projection-valued function $P_{\lambda}:I\to\R^{d\tm d}$ such that $\Phi_{\lambda}(t,s)P_{\lambda}(s)=P_{\lambda}(t)\Phi_{\lambda}(t,s)$ and
\begin{align*}
	\abs{\Phi_{\lambda}(t,s)P_{\lambda}(s)}\leq Ke^{-\alpha(t-s)},\,
	\abs{\Phi_{\lambda}(s,t)[I_d-P_{\lambda}(t)]}\leq Ke^{-\alpha(t-s)}\text{ for all }s\leq t
\end{align*}
with $s,t\in I$ hold. Then $P_\lambda:I\to\R^{d\tm d}$ is called \emph{invariant projector}.

\begin{hypothesis*}[$\mathbf{H_2}$]
	For each parameter $\lambda\in\Lambda$ suppose that the bounded entire solution $\phi_\lambda$ to \eqref{cde} is hyperbolic on both $\R_+$ with projector $P_\lambda^+:\R_+\to\R^{d\tm d}$ and on $\R_-$ with projector $P_\lambda^-:\R_-\to\R^{d\tm d}$. We moreover assume there exists a $\lambda_0\in\Lambda$ such that
	\begin{equation}
		\dim\bigl(R(P_{\lambda_0}^+(0))\cap N(P_{\lambda_0}^-(0))\bigr)
		=
		\codim\bigl(R(P_{\lambda_0}^+(0))+N(P_{\lambda_0}^-(0))\bigr).
		\label{nomorse}
	\end{equation}
\end{hypothesis*}
\begin{proposition}
    \label{propfred}
	If $(H_0$--$H_2)$ hold, then $D_1G(x,\lambda)\in L(W_0^{1,\infty}(\R),L_0^\infty(\R))$ is Fredholm of index $0$ for all $(x,\lambda)\in U$ (with $U$ defined in \tref{thmgprop}(a)). 
\end{proposition}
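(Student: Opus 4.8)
The plan is to reduce the claim to the reference solution $x=0$ by a compact-perturbation argument, and then to propagate the index-$0$ information provided at $\lambda_0$ by Hypothesis~$(H_2)$ along the whole branch by homotopy invariance of the Fredholm index.

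First I would fix $(x,\lambda)\in U$ and, reading off \eqref{thmgprop1}, observe that the difference $D_1G(x,\lambda)-D_1G(0,\lambda)$ is exactly the multiplication operator $M$ induced by the matrix-valued function
\[
	B(t):=D_2f(t,\phi_\lambda(t),\lambda)-D_2f(t,x(t)+\phi_\lambda(t),\lambda)\quad\text{for a.a.\ }t\in\R.
\]
Since $\Omega$ is convex, $\phi_\lambda$ bounded and $x\in W_0^{1,\infty}(\R)\subseteq L^\infty(\R)$, both $\phi_\lambda(t)$ and $x(t)+\phi_\lambda(t)$ stay in a compact $K\subseteq\Omega$ for a.a.\ $t$, so \eqref{h0b} makes $B$ essentially bounded; and because $x\in W_0^{1,\infty}(\R)$ vanishes at $\pm\infty$ in the essential sense, the uniform continuity \eqref{h0c} applied with the pair $\bigl(x(t)+\phi_\lambda(t),\phi_\lambda(t)\bigr)$ and $\bar\lambda=\lambda$ forces $\lim_{t\to\pm\infty}B(t)=0$. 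Hence \cref{cormult} shows that $M\colon W_0^{1,\infty}(\R)\to L_0^\infty(\R)$ is compact, and since Fredholmness and the index are stable under compact perturbations, it suffices to prove the assertion for $D_1G(0,\lambda)$, $\lambda\in\Lambda$.

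For $x=0$, formula \eqref{thmgprop1} identifies $D_1G(0,\lambda)$ with the operator $y\mapsto\dot y-D_2f(\cdot,\phi_\lambda(\cdot),\lambda)y$ attached to the variational equation \eqref{var}. Hypothesis~$(H_2)$ provides exponential dichotomies for \eqref{var} on both $\R_+$ and $\R_-$, and I would quote the Palmer-type Fredholm criterion — classically \cite{palmer:84,palmer:88}, and in the form adapted to Carath\'eodory right-hand sides and to the pair of spaces $\bigl(W_0^{1,\infty}(\R),L_0^\infty(\R)\bigr)$ in \cite{poetzsche:12,poetzsche:skiba:24} — to conclude that $D_1G(0,\lambda)$ is Fredholm with index $\dim\bigl(R(P_\lambda^+(0))\cap N(P_\lambda^-(0))\bigr)-\codim\bigl(R(P_\lambda^+(0))+N(P_\lambda^-(0))\bigr)$. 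Finally, by \tref{thmgprop}(c) the map $\lambda\mapsto D_1G(0,\lambda)$ is continuous on the connected interval $\Lambda$ (note $\{0\}\tm\Lambda\subseteq U$) and, by the preceding paragraph, takes values in the open set of Fredholm operators, on which the index is locally constant; hence $\operatorname{ind}D_1G(0,\lambda)$ is independent of $\lambda$, and \eqref{nomorse} fixes this common value to be $0$ at $\lambda=\lambda_0$. Combining this with the first step yields that $D_1G(x,\lambda)$ is Fredholm of index $0$ for every $(x,\lambda)\in U$.

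I expect the genuine content to be concentrated in the identification of $D_1G(0,\lambda)$ as a Fredholm operator with the displayed index formula \emph{between $W_0^{1,\infty}(\R)$ and $L_0^\infty(\R)$}: Palmer's theorem is classically formulated for pairs of spaces of bounded (continuously differentiable) functions, so the passage to the essentially bounded functions vanishing at infinity used here requires the modifications recorded in \cite{poetzsche:12,poetzsche:skiba:24}. Everything else — compactness of $M$ via \cref{cormult}, continuity of $\lambda\mapsto D_1G(0,\lambda)$ via \tref{thmgprop}(c), and local constancy of the index on the connected set $\{D_1G(0,\lambda):\lambda\in\Lambda\}$ — is routine once that input is available.
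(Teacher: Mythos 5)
Your proposal is correct and follows essentially the same route as the paper's proof: reduce to $D_1G(0,\lambda)$ via the compactness of the multiplication operator from \cref{cormult} (using \eqref{h0b}, \eqref{h0c} and the vanishing of $x$ at $\pm\infty$), obtain Fredholmness of $D_1G(0,\lambda)$ from the two-sided dichotomies via the Palmer-type criterion of \cite{poetzsche:12,poetzsche:skiba:24}, pin the index to $0$ at $\lambda_0$ by \eqref{nomorse}, and propagate it over the connected interval $\Lambda$ by continuity of the path and local constancy of the index. The only difference is the order of the two steps (the paper treats $x=0$ first and adds the compact perturbation afterwards), which is immaterial.
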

\begin{proof}
	Let $\lambda\in\Lambda$. Because of $(H_2)$ the variational equation \eqref{var} has exponential dichotomies on both halflines, such that \cite[Thm.~2.6(b)]{poetzsche:skiba:24} implies the partial derivative $D_1G(0,\lambda)\in L(W_0^{1,\infty}(\R),L_0^\infty(\R))$ to be Fredholm. In particular, abbreviating $X_+:=R(P_{\lambda_0}^+(0))$ and $X_-:=N(P_{\lambda_0}^-(0))$ the index of $D_1G(0,\lambda_0)$ is given by
	$$
		\dim(X_+\cap X_-)-\dim(X_++X_-)^\perp
		=
		\dim(X_+\cap X_-)-\codim(X_++X_-)
		\stackrel{\eqref{nomorse}}{=}
		0
	$$
	due to \cite[step (V) in the proof of Thm.~2.6]{poetzsche:skiba:24}. Because the path $\mu\mapsto D_1G(0,\mu)$ is continuous on the connected parameter space $\Lambda$ we conclude that every derivative $D_1G(0,\lambda)$ has Fredholm index $0$. 
    Now let $x\in W_0^{1,\infty}(\R)\subset L_0^\infty(\R)$ and $\eps>0$. On the one hand, due to $(H_0)$ there is a $\delta>0$ with
	$$
		\abs{\xi}<\delta
		\quad\Rightarrow\quad
		\abs{D_2f(t,\xi+\phi_\lambda(t),\lambda)-D_2f(t,\phi_\lambda(t),\lambda)}
		<
		\eps
		\quad\text{for a.a.\ }t\in\R.
	$$
    %{\color{red} and all $\xi\in\R^d$
     On the other hand, there is a $T>0$ such that $x(t)\in B_\delta(0)$ holds for a.a.\ $t\in\R\setminus(-T,T)$ and thus $A(t):=D_2f(t,x(t)+\phi_\lambda(t),\lambda)-D_2f(t,\phi_\lambda(t),\lambda)$ satisfies
	$$
		\abs{A(t)}
		\leq
		\abs{D_2f(t,x(t)+\phi_\lambda(t),\lambda)-D_2f(t,\phi_\lambda(t),\lambda)}
		<
		\eps.%\quad{\color{red}\text{for all } t\in\R\setminus(-T,T)}.
	$$
    In conclusion, $A:\R\to\R^{d\tm d}$ is essentially bounded and satisfies $\lim_{t\to\pm\infty}A(t)=0$. Therefore, \cref{cormult} implies that the multiplication operator $M\in L(W_0^{1,\infty}(\R),L_0^\infty(\R))$ defined in \eqref{cormult1} is compact. We obtain that
	$$
		D_1G(x,\lambda)
        =
        D_1G(0,\lambda)+D_1G(x,\lambda)-D_1G(0,\lambda)
        =
        D_1G(0,\lambda)+M
	$$
 is a compact perturbation of a Fredholm operator $D_1G(0,\lambda)$ (with index $0$). Whence, \cite[pp.~295--296, Thm.~5.C]{zeidler:95} implies $D_1G(x,\lambda)\in F_0(W_0^{1,\infty}(\R),L_0^\infty(\R))$. 
\end{proof}

\begin{lemma}\label{lemcont}
    If $(H_0$--$H_2)$ hold, then the invariant projectors $P_\lambda^+$, $P_\lambda^-$ from Hypothesis $(H_2)$ can be chosen such that $\lambda\mapsto P_\lambda^+(0)$ and $\lambda\mapsto P_\lambda^-(0)$ are continuous on $\Lambda$. 
    %there exists a continuous family $\widetilde{P}_{\pm}\colon %\Lambda\to \mathcal{L}(\R^d)$ satisfying ED on %$\mathbb{R}_{\pm}$, where $\Lambda$ is $\sigma$-compact.
\end{lemma}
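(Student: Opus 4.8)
The plan is to isolate the intrinsic (choice–independent) subspaces carried by the two half-line dichotomies, to prove that these vary continuously with $\lambda$, and then to build the projectors directly from them via orthogonal projection. First I would record that the coefficient matrix $A_\lambda(t):=D_2f(t,\phi_\lambda(t),\lambda)$ of the variational equation \eqref{var} depends continuously on $\lambda$ in a strong sense: combining the uniform continuity \eqref{h0c} of $D_2f$ in $(x,\lambda)$ with the uniform closeness $\sup_{t\in\R}\abs{\phi_\lambda(t)-\phi_{\lambda_0}(t)}\to0$ as $\lambda\to\lambda_0$ granted by Hypothesis~$(H_1)$, one obtains $\norm{A_\lambda-A_{\lambda_0}}_{L^\infty(\R)}\to0$ as $\lambda\to\lambda_0$. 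Thus $\lambda\mapsto A_\lambda$ is continuous from $\Lambda$ into $L^\infty(\R,\R^{d\tm d})$; this is the step that genuinely exploits the uniform-in-$t$ content of $(H_0)$ and $(H_1)$.

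Next I would pass to the intrinsic subspaces. By Hypothesis~$(H_2)$ each $(V_\lambda)$ has an exponential dichotomy on $\R_+$ with some projector $P_\lambda^+$ and on $\R_-$ with some projector $P_\lambda^-$. The range $V_\lambda^+:=R(P_\lambda^+(0))$ is choice-independent---it is the stable subspace $\set{\xi\in\R^d:\,\sup_{t\geq0}\abs{\Phi_\lambda(t,0)\xi}<\infty}$---and likewise the kernel $V_\lambda^-:=N(P_\lambda^-(0))$ is choice-independent, being the unstable subspace $\set{\xi\in\R^d:\,\sup_{t\leq0}\abs{\Phi_\lambda(t,0)\xi}<\infty}$. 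Fixing $\lambda_0\in\Lambda$, the roughness of exponential dichotomies under uniformly small perturbations (in the form valid for Carath\'eodory equations, cf.~\cite{coppel:78,palmer:88,poetzsche:skiba:24}) applied to the base systems $(V_{\lambda_0})$ on $\R_+$ and on $\R_-$, together with the first step, yields dichotomies for $(V_\lambda)$ with invariant projectors whose values at $0$ converge to $P_{\lambda_0}^+(0)$ resp.~$P_{\lambda_0}^-(0)$ as $\lambda\to\lambda_0$. Since $V_\lambda^\pm$ are intrinsic, this forces $V_\lambda^+\to V_{\lambda_0}^+$ and $V_\lambda^-\to V_{\lambda_0}^-$ in the respective Grassmannians, so $\lambda\mapsto V_\lambda^\pm$ are continuous on $\Lambda$; in particular $\dim V_\lambda^\pm$ is locally constant and hence constant on the interval $\Lambda$.

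Finally I would construct the projectors. Let $\Pi_\lambda\in\R^{d\tm d}$ be the orthogonal projection of $\R^d$ onto $V_\lambda^+$; since orthogonal projection onto a subspace depends continuously on the subspace, $\lambda\mapsto\Pi_\lambda$ is continuous on $\Lambda$. Set $P_\lambda^+(0):=\Pi_\lambda$ and propagate by $P_\lambda^+(t):=\Phi_\lambda(t,0)P_\lambda^+(0)\Phi_\lambda(0,t)$ for $t\in\R_+$. Because $R(P_\lambda^+(0))=V_\lambda^+$ is exactly the stable subspace of $(V_\lambda)$ on $\R_+$, the standard fact that an exponential dichotomy on a half-line persists for \emph{any} invariant projector with the prescribed (intrinsic) stable range---only the complementary subspace being at one's disposal (cf.~\cite[Ch.~4]{coppel:78})---shows that $P_\lambda^+$ is an admissible invariant projector. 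Symmetrically, let $\Pi_\lambda^-$ be the orthogonal projection onto $(V_\lambda^-)^\perp$, put $P_\lambda^-(0):=\Pi_\lambda^-$ so that $N(P_\lambda^-(0))=V_\lambda^-$, and propagate over $\R_-$; the analogous fact gives an exponential dichotomy of $(V_\lambda)$ on $\R_-$ with invariant projector $P_\lambda^-$. By construction $\lambda\mapsto P_\lambda^+(0)=\Pi_\lambda$ and $\lambda\mapsto P_\lambda^-(0)=\Pi_\lambda^-$ are continuous on $\Lambda$, which is the assertion.

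I expect the main obstacle to be the second step: one must invoke roughness of exponential dichotomies in a form that applies to the merely $L^1_{loc}$ Carath\'eodory coefficient $A_\lambda$ under $L^\infty$-small perturbations, and then extract from it the continuity of the \emph{subspaces} $V_\lambda^\pm$ rather than just the persistence of some dichotomy---which is precisely where the $L^\infty$-continuity obtained in the first step is indispensable.
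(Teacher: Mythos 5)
Your argument is correct, but it globalizes differently from the paper. Both proofs start from the same two facts: the range of a dichotomy projector on $\R_+$ (resp.\ the kernel on $\R_-$) is intrinsic, and for $\lambda$ near a fixed $\lambda_0$ some choice of projectors depends continuously on $\lambda$ --- you obtain the latter from the $L^\infty$-continuity of $\lambda\mapsto D_2f(\cdot,\phi_\lambda(\cdot),\lambda)$ (a correct use of \eqref{h0c} and $(H_1)$) together with roughness of exponential dichotomies, whereas the paper simply quotes \cite[Lemma~3.1]{poetzsche:skiba:24}, which encapsulates exactly this local statement; so your ``main obstacle'' is in effect the same external input the paper uses. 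The divergence is in how a global continuous choice is produced: the paper shows that the set of invariant projections with the (unique) prescribed range is convex, covers $\Lambda$ by a locally finite family of balls and glues the local choices with a partition of unity, while you make the canonical choice --- the orthogonal projection onto the intrinsic stable subspace $V_\lambda^+$ (resp.\ with kernel the intrinsic unstable subspace $V_\lambda^-$) --- after deducing continuity of $\lambda\mapsto V_\lambda^\pm$ in the Grassmannian, and then invoke the standard fact (Coppel) that a half-line dichotomy persists for any projection with the prescribed range/kernel, only the constants changing. Your route is shorter and avoids the partition-of-unity/convexity machinery, at the price of importing two classical facts (roughness in the Carath\'eodory setting with projector convergence, and the change-of-complement lemma); the paper's route is more self-contained, since its convexity computation proves the needed dichotomy estimates explicitly and its gluing only uses the local lemma it cites. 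Do make sure, if you write this up, to state the change-of-complement lemma with the elementary estimate (for $Q$ with $R(Q)=R(P^+_\lambda(0))$ one gets constants $K'=K+K^2\abs{Q-P^+_\lambda(0)}$ and the same rate $\alpha$), since the constants for your orthogonal choice are not those of the original projectors; for the purposes of the lemma their $\lambda$-dependence is irrelevant, so this is harmless.
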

\begin{proof}
     For each $\lambda\in\Lambda$ we construct a function of projectors being continuous locally near $\lambda$ and then build them globally by means of a partition of unity. Indeed, referring to \cite[Lemma~3.1]{poetzsche:skiba:24} there exists a $r(\lambda)>0$ such that the invariant projectors $P_\mu^+$ for the exponential dichotomy on $\R_+$ can be chosen to be continuous in $\mu\in B_{r(\lambda)}(\lambda)$. During the course of this proof we denote $P\in\R^{d\tm d}$ as \emph{invariant projection} for \eqref{var}, provided that $\Phi_\lambda(\cdot,0)P\Phi_\lambda(0,\cdot):\R_+\to\R^{d\tm d}$ defines an invariant projector for the assumed exponential dichotomy of $(V_\lambda)$ on $\R_+$. 

    (I) We show that the convexity of the following set of projections
    $$
        \Pi(\lambda)
        :=
        \set{P\in\R^{d\tm d}\mid
        P\text{ is an invariant projection for }\eqref{var}}
        \fall\lambda\in\Lambda.
    $$
    Thereto, let $P,\bar P\in\Pi(\lambda)$ yielding dichotomies with respective growth rates $\alpha,\bar\alpha>0$ and dichotomy constants $K$, $\bar K\geq 1$. We first show 
$\theta P+\bar\theta\bar P\in \Pi(\lambda)$ for all $\theta,\bar\theta\in [0,1]$ with $\theta+\bar\theta=1$. Since $R(P)=R(\bar P)$ (the range of projectors for dichotomies on $\R_+$ is uniquely determined), it follows that $\theta R(P)+\bar\theta R(\bar P)=R(P)=R(\bar P)$. Thus, 
\begin{align*}
    P(\theta P+\bar\theta\bar P)&=\theta P+\bar\theta\bar P,&
    \bar P(\theta P+\bar\theta \bar P)&=\theta P+\bar\theta\bar P
\end{align*}
    and consequently
\begin{multline*}
[\theta P+\bar\theta\bar P][\theta P+\bar\theta\bar P]
=
\theta P(\theta P+\bar\theta\bar P)+\bar\theta \bar P(b P+\bar\theta \bar P)\\
=
\theta(\theta P+\bar\theta\bar P)+\bar\theta(\theta P+\bar\theta\bar P)
=
(\theta+\bar\theta)(\theta P+\bar\theta\bar P)=\theta P+\bar\theta\bar P
\end{multline*}
which implies that $\theta P+\bar\theta\bar P$ is a projection. It remains to derive the dichotomy estimates
\begin{align*}
&|\Phi_\lambda(t,0)[\theta P+\bar\theta\bar P]\Phi_\lambda(0,s)|
\leq
\theta|\Phi_\lambda(t,0)P\Phi_\lambda(0,s)|+\bar\theta|\Phi_\lambda(t,0)\bar P\Phi_\lambda(0,s)|\\
&\leq
\theta K\text{e}^{-\alpha(t-s)}+\bar\theta\bar K\text{e}^{-\bar\alpha(t-s)}
\leq
K_+\text{e}^{-\alpha_+(t-s)}\fall 0\leq s\leq t
\end{align*}
and
\begin{align*}
	&
	\abs{\Phi_\lambda(t,0)[I_d-\theta P-\bar\theta\bar P]\Phi_\lambda(0,s)}\
	=
	\abs{\Phi_\lambda(t,0)[(\theta+\bar\theta)I_d-\theta P+\bar\theta\bar P]\Phi_\lambda(0,s)}\\
	&=
	\abs{\theta \Phi_\lambda(t,0)[I_d-P]\Phi_\lambda(0,s)+\bar\theta \Phi_\lambda(t,0)[I_d-\bar P]\Phi_\lambda(0,s)}\\
&\leq
\theta|\Phi_\lambda(t,0)[I_d-P]\Phi_\lambda(0,s)|+\bar\theta|\Phi_\lambda(t,0)[I_d-\bar P]\Phi_\lambda(0,s)|\\
&\leq
\theta K\text{e}^{-\alpha(s-t)}+\bar\theta\bar K\text{e}^{-\bar\alpha(s-t)}
\leq
\theta K_+\text{e}^{-\alpha_+(s-t)}+\bar\theta K_+\text{e}^{-\alpha_+(s-t)}\\
&\leq
K_+\text{e}^{-\alpha_+(s-t)} \fall 0\leq t\leq s,
\end{align*}
where $K_+:=\max\{K,\bar K\}$ and $\alpha_+:=\min\{\alpha,\bar\alpha\}$. Finally, notice that the convexity of $\Pi(\lambda)$ implies the following additional property:
\begin{equation*}
P_1,\ldots,P_n\in\Pi(\lambda)\Longrightarrow \sum_{i=1}^n\theta_i P_i\in\Pi(\lambda)
\text{ with }\sum_{i=1}^n\theta_i=1, \theta_i\in [0,1].
\end{equation*}

    (II) The open interval $\Lambda$ can be covered by the family $\set{B_{r(\lambda)}(\lambda)\mid \lambda\in\Lambda}$ of open balls from Step (I). For each $\lambda\in\Lambda$ there is an invariant projector $P_\lambda^+:\R_+\to\R^{d\tm d}$ due to $(H_2)$ and we define the projection $P_\lambda:=P_\lambda^+(0)$. With \cite[Lemma~3.1]{poetzsche:skiba:24} it can be continued to a continuous function $P_\lambda:B_{r(\lambda)}(\lambda)\to\R^{d\tm d}$ such that each $P_\lambda(\mu)$ is an invariant projection for $(V_\mu)$. Since $\Lambda$ is $\sigma$-compact, there exists a countable cover
    \begin{equation*}
        \Lambda\subset \bigcup_{i\in\N}B_{r(\lambda_i)}(\lambda_i).
    \end{equation*}
    of $\Lambda$. W.l.o.g.\ we can assume that this cover is locally finite (otherwise, one can construct a locally finite cover $\{V_i\}_{i\in\N}$ of $\Lambda$ with  
$V_i\subset B_{r(\lambda_i)}(\lambda_i)$ for $i\in\N$, cf.~\cite[Thm.~5.2.1]{eng:89}). Let $\{\tau_i\colon\Lambda\to [0,1]\mid i\in \mathbb{N}\}$ be a partition of unity subordinated to the above cover, i.e., 
\begin{align*}
	\sum_{i=1}^{\infty}\tau_i(\lambda)&=1\fall\lambda\in\Lambda,&
	\overline{\{\lambda\in \Lambda\mid\tau_i(\lambda)\neq 0\}}\subset B_{r(\lambda_i)}(\lambda_i)
	\fall i\in\N.
\end{align*}
From the above properties it follows that if $\lambda\not\in B_{r(\lambda_i)}(\lambda_i)$, then $\tau_j(\lambda)=0$; we define
$$
    \tilde P(\lambda):=\sum_{i=1}^{\infty} \tau_i(\lambda)P_{\lambda_i}(\lambda)\fall\lambda\in\Lambda.
$$
Now on the one hand, step (II) guarantees that each $\tilde P(\lambda)$ defines an invariant projection for \eqref{var}. On the other hand, standard arguments based on corresponding properties of the partition of unity show that $\tilde P:\Lambda\to\R^{d\tm d}$ is continuous. 

    (III) Based on the fact that the kernels of the projectors are uniquely determined for dichotomies on $\R_-$, the argument for $P_\lambda^-$ is analogous. This completes the proof.
\end{proof}

Following \cite{poetzsche:skiba:24} the above preparations allow us to introduce a tool to detect changes in the set of bounded entire solutions to parametrized Carath{\'e}odory equations \eqref{cde}. Thereto, note that due to Hypothesis~$(H_2)$ the bounded entire solutions $\phi_\lambda$ are hyperbolic on both $\R_+$ with a projector $P_\lambda^+$ and on $\R_-$ with a projector $P_\lambda^-$. 
%Now thanks to \cite[Lemma~3.1]{poetzsche:skiba:24} there exists a $\rho_0>0$ such that the mappings $(t,\lambda)\mapsto P_\lambda^+(t)$ and $(t,\lambda)\mapsto P_\lambda^-(t)$ are continuous with
%\begin{align*}
%	r&\equiv\dim R(P_\lambda^+(0)),&
%	n&\equiv\dim N(P_\lambda^-(0))\on B_{\rho_0}(\lambda^\ast).
%\end{align*}
Consequently, there exist functions $\xi^+_1,\ldots,\xi^+_r$, $\xi^-_1,\ldots,\xi^-_n$ having the following properties for all $\lambda\in\Lambda$: 
\begin{itemize}
	\item $\xi^+_1(\lambda),\ldots,\xi^+_r(\lambda)$ is a basis of $R(P_\lambda^+(0))\subseteq\R^d$,

	\item $\xi^-_1(\lambda),\ldots,\xi^-_n(\lambda)$ is a basis of $N(P_\lambda^-(0))\subseteq\R^d$.
\end{itemize}
\begin{lemma}\label{lem50}
	If $(H_0$--$H_2)$ hold, then $r+n=d$. Moreover, the vectors $\xi^+_1(\lambda),\ldots,\xi^+_r(\lambda)$, $\xi^-_1(\lambda),\ldots,\xi^-_n(\lambda)$ form a basis of $\R^d$ if and only if $R(P_\lambda^+(0))\oplus N(P_\lambda^-(0))=\R^d$ holds for all $\lambda\in\Lambda$.
\end{lemma}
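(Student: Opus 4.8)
The plan is to reduce both claims to elementary linear algebra, after first recording that the integers $r$ and $n$ do not in fact depend on $\lambda$. For this I would invoke \lref{lemcont}: the maps $\lambda\mapsto P_\lambda^+(0)$ and $\lambda\mapsto P_\lambda^-(0)$ are continuous on $\Lambda$, the rank of a norm-continuous family of projections is locally constant, and $\Lambda$ is an interval, hence connected; therefore $r:=\dim R(P_\lambda^+(0))$ and $n:=\dim N(P_\lambda^-(0))$ are the same for every $\lambda\in\Lambda$. (This constancy is of course already tacitly presupposed by the statement, which assigns fixed index ranges $1,\ldots,r$ and $1,\ldots,n$ to the basis functions.)

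To obtain $r+n=d$ I would evaluate \eqref{nomorse} at the distinguished parameter $\lambda_0$. Writing $X_+:=R(P_{\lambda_0}^+(0))$ and $X_-:=N(P_{\lambda_0}^-(0))$, the Grassmann dimension formula gives
\[
	\dim(X_++X_-)=\dim X_++\dim X_--\dim(X_+\cap X_-)=r+n-\dim(X_+\cap X_-),
\]
so that $\codim(X_++X_-)=d-r-n+\dim(X_+\cap X_-)$. Substituting this into the identity $\dim(X_+\cap X_-)=\codim(X_++X_-)$ from \eqref{nomorse}, the term $\dim(X_+\cap X_-)$ cancels and leaves $r+n=d$. (Alternatively one could quote \pref{propfred}, where exactly this identity is seen to hold for \emph{every} $\lambda\in\Lambda$, since $D_1G(0,\cdot)$ is a continuous path of Fredholm operators of index $0$ over the connected set $\Lambda$.)

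For the second assertion I would argue pointwise in $\lambda$. Fix $\lambda\in\Lambda$ and abbreviate $Y_+:=R(P_\lambda^+(0))$, $Y_-:=N(P_\lambda^-(0))$; by construction $\{\xi^+_1(\lambda),\ldots,\xi^+_r(\lambda)\}$ is a basis of $Y_+$ and $\{\xi^-_1(\lambda),\ldots,\xi^-_n(\lambda)\}$ is a basis of $Y_-$, so the concatenated list of these $r+n=d$ vectors spans $Y_++Y_-$. Since the list has exactly $d=\dim\R^d$ entries, it is a basis of $\R^d$ if and only if it spans $\R^d$, i.e.\ if and only if $Y_++Y_-=\R^d$. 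Finally, the Grassmann formula yields $\dim(Y_++Y_-)=r+n-\dim(Y_+\cap Y_-)=d-\dim(Y_+\cap Y_-)$, whence $Y_++Y_-=\R^d$ is equivalent to $Y_+\cap Y_-=\set{0}$, that is, to the sum being direct with $Y_+\oplus Y_-=\R^d$. This is precisely the claimed equivalence, and letting $\lambda$ range over $\Lambda$ gives the stated "for all $\lambda$" form.

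The computation itself is routine; the one point I would treat with care — and the only real obstacle — is the transition from the hypothesis \eqref{nomorse}, which is imposed only at $\lambda_0$, to conclusions valid at arbitrary $\lambda$. This transition rests entirely on the $\lambda$-independence of $r$ and $n$ supplied by \lref{lemcont} (equivalently, on the index-$0$ property of the path $D_1G(0,\cdot)$ established in \pref{propfred}).
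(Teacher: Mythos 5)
Your proof is correct, and it reaches $r+n=d$ by a slightly different route than the paper. The paper obtains this identity by quoting the index formula from step (V) in the proof of Thm.~2.6 of the cited manuscript, which gives $\operatorname{ind}D_1G(0,\lambda)=n-(d-r)$, and combining it with \pref{propfred}, where the index is shown to vanish for every $\lambda\in\Lambda$ (there the transfer from $\lambda_0$ to general $\lambda$ happens through the constancy of the Fredholm index along the continuous path $\lambda\mapsto D_1G(0,\lambda)$ on the connected interval $\Lambda$). You instead work entirely at the level of subspaces of $\R^d$: the Grassmann formula applied to \eqref{nomorse} at $\lambda_0$ yields $r+n=d$ directly, and the passage to arbitrary $\lambda$ is handled by the local constancy of the rank of the continuous projector families $\lambda\mapsto P_\lambda^\pm(0)$ from \lref{lemcont} together with connectedness of $\Lambda$ --- a point the paper leaves tacit (it is implicitly needed already to define the functions $\xi_i^\pm$ with fixed index ranges), so making it explicit is a genuine improvement in self-containedness. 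What the paper's route buys is brevity given that \pref{propfred} and the external index formula are already in place; what yours buys is an argument that never leaves finite-dimensional linear algebra and does not rely on the external reference. Your treatment of the second assertion (a list of $d=r+n$ vectors spanning $Y_++Y_-$ is a basis of $\R^d$ iff $Y_++Y_-=\R^d$, which by Grassmann is equivalent to the sum being direct) is exactly the "easy consequence of Linear Algebra" the paper alludes to.
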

\begin{proof}
	Again, \cite[step (V) in the proof of Thm.~2.6]{poetzsche:skiba:24} guarantees $0=n-(d-r)$, i.e.\ $n+r=d$, because due to \pref{propfred} the index of the Fredholm operator $D_1G(0,\lambda)$ is $0$ for all $\lambda\in\Lambda$. The claimed equivalence is an easy consequence of Linear Algebra. 
\end{proof}
On this basis, a \emph{global Evans function} for \eqref{var} is defined by
\begin{align*}
	E:\Lambda&\to\R,&
E(\lambda)&:=\det\bigl(\xi^+_1(\lambda),\ldots,\xi^+_r(\lambda),\xi^-_1(\lambda),\ldots,\xi^-_n(\lambda)\bigr).
\end{align*}
Although they depend on the choice of the vectors $\xi^+_i(\lambda)$, $\xi^-_j(\lambda)\in\R^d$, any two Evans functions differ only by a product with a nonvanishing function (this factor is a determinant of the transformation matrices that describe the change of bases). 

\begin{proposition}[properties of global Evans functions]\label{propEvans}
	If $(H_0$--$H_2)$ hold, then there exists a continuous global Evans function $E:\Lambda\to\R$ of \eqref{var} and the following statements are equivalent for all $\lambda\in\Lambda$: 
	\begin{enumerate}[$(a)$]
		\item $E(\lambda)\neq 0$,

		\item $D_1G(0,\lambda)\in GL(W_0^{1,\infty}(\R),L_0^\infty(\R))$. 
    \end{enumerate}
\end{proposition}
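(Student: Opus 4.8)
The plan is to prove the two claims separately: first the existence of a continuous global Evans function, then the equivalence $(a)\Leftrightarrow(b)$. For the continuity, I would invoke \lref{lemcont}, which gives continuous choices $\lambda\mapsto P_\lambda^+(0)$ and $\lambda\mapsto P_\lambda^-(0)$ of the invariant projectors at $0$. From continuously varying projectors one extracts locally continuous bases of the ranges resp.\ kernels: near any $\lambda_0$ the projection $P_{\lambda}^+(0)$ has constant rank $r$, so for $\lambda$ close to $\lambda_0$ the vectors $\xi_i^+(\lambda):=P_\lambda^+(0)e_{j_i}$ (for a suitable choice of coordinate vectors $e_{j_i}$ making $\xi_1^+(\lambda_0),\dots,\xi_r^+(\lambda_0)$ independent) stay independent and depend continuously on $\lambda$; similarly for $\xi_j^-(\lambda):=[I_d-P_\lambda^-(0)]e_{k_j}$ spanning $N(P_\lambda^-(0))$. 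Since $\Lambda$ is a connected interval and any two Evans functions differ only by a nowhere-vanishing factor (as already noted in the text), gluing these local constructions — or simply observing that the determinant formula is continuous wherever the bases are continuous, and that a local change of basis multiplies $E$ by a nonvanishing continuous function — yields a globally continuous $E:\Lambda\to\R$. Concretely one can use \lref{lemcont} to pick the global continuous projectors and then note $R(P_\lambda^+(0))$ and $N(P_\lambda^-(0))$ are continuous families of subspaces of fixed dimensions $r$ and $n=d-r$ (by \lref{lem50}), whence admit continuous bases on all of $\Lambda$ after a partition-of-unity argument analogous to the one in the proof of \lref{lemcont}.

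For the equivalence, the key fact is the identity $N(D_1G(0,\lambda))\cong R(P_\lambda^+(0))\cap N(P_\lambda^-(0))$, which is exactly the content of the roughness/Fredholm theory already cited (\cite[Thm.~2.6]{poetzsche:skiba:24}, step (V)): a bounded entire solution $y$ of the variational equation \eqref{var} lying in $W_0^{1,\infty}(\R)$ is decaying on $\R_+$ — hence $y(0)\in R(P_\lambda^+(0))$ — and decaying on $\R_-$ — hence $y(0)\in N(P_\lambda^-(0))$ — and conversely every $v$ in that intersection extends to such a solution; moreover $y\mapsto y(0)$ is an isomorphism from $N(D_1G(0,\lambda))$ onto $R(P_\lambda^+(0))\cap N(P_\lambda^-(0))$. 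Since by \pref{propfred} the operator $D_1G(0,\lambda)$ has Fredholm index $0$, it is invertible if and only if it is injective, i.e.\ if and only if $R(P_\lambda^+(0))\cap N(P_\lambda^-(0))=\{0\}$. By \lref{lem50} and a dimension count ($r+n=d$), this transversality is equivalent to $R(P_\lambda^+(0))\oplus N(P_\lambda^-(0))=\R^d$, which in turn is precisely the statement that the $d$ vectors $\xi_1^+(\lambda),\dots,\xi_r^+(\lambda),\xi_1^-(\lambda),\dots,\xi_n^-(\lambda)$ are linearly independent in $\R^d$, i.e.\ that their determinant $E(\lambda)$ is nonzero. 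Chaining these equivalences gives $(a)\Leftrightarrow(b)$.

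I expect the main obstacle to be bookkeeping rather than any genuine difficulty: namely, making the construction of the continuous Evans function clean and coordinate-free while being careful that the claimed isomorphism $N(D_1G(0,\lambda))\cong R(P_\lambda^+(0))\cap N(P_\lambda^-(0))$ is correctly imported with the right function spaces ($W_0^{1,\infty}$ and $L_0^\infty$ rather than the $BC$/$C_0$ setting of the predecessor results). One should double-check that the cited statements from \cite{poetzsche:skiba:24} are formulated for, or directly transfer to, the present $L^\infty$/$W^{1,\infty}$ framework — but the excerpt's \pref{propfred} and \lref{lem50} already commit to this, so the remaining work is just to assemble them. The rest is linear algebra (rank-nullity, the equivalence of ``trivial intersection'' and ``direct sum'' for complementary dimensions) and the elementary observation that a determinant of $d$ vectors in $\R^d$ vanishes iff they are dependent.
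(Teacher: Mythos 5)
Your proposal is correct and follows essentially the same route as the paper: continuous projectors from \lref{lemcont}, continuous bases of $R(P_\lambda^+(0))$ and $N(P_\lambda^-(0))$ giving a continuous determinant, and the equivalence via the isomorphism $N(D_1G(0,\lambda))\cong R(P_\lambda^+(0))\cap N(P_\lambda^-(0))$ combined with \pref{propfred} and \lref{lem50} --- steps the paper simply delegates to \cite[Prop.~3.3, Thm.~2.4, Prop.~3.4]{poetzsche:skiba:24}. One small caution: your ``partition-of-unity argument analogous to \lref{lemcont}'' does not transfer literally, because convex combinations of bases of a subspace need not remain linearly independent (the proof of \lref{lemcont} hinged on the convexity of the set of invariant projections); over the interval $\Lambda$ one instead patches the local frames $\lambda\mapsto P_\lambda^+(0)e_{j_i}$ by constant changes of basis on successive subintervals (equivalently, uses that a continuous vector bundle over an interval is trivial), which is exactly what the cited Prop.~3.3 supplies.
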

\begin{proof}
	Thanks to \lref{lemcont} one can choose continuous mappings $\lambda\mapsto P_\lambda^+(0)$ and $\lambda\mapsto P_\lambda^-(0)$. Given this, the argument from \cite[Prop.~3.3]{poetzsche:skiba:24} yields that $\xi_i^+$, $\xi_j^-:\Lambda\to\R^d$ are continuous for $1\leq i\leq r$, $1\leq j\leq n$ and in turn the continuity of $E:\Lambda\to\R$. The claimed equivalence is due to \cite[Thm.~2.4 and Prop.~3.4]{poetzsche:skiba:24}. 
\end{proof}

A parameter $\lambda\in\Lambda$ is denoted as \emph{critical}, if the corresponding solution $\phi_\lambda$ is not hyperbolic on $\R$ and we introduce the \emph{set of critical values}
$$
	\fC:=\set{\lambda\in\Lambda:\,\phi_\lambda\text{ is not hyperbolic on }\R}
$$
for \eqref{var}. 
Note that $\fC\subseteq\R$ is closed in $\Lambda$, but not necessarily discrete. 
\begin{proposition}
	\label{propevans}
	Let $(H_0$--$H_2)$ hold. The set $\fC$ is closed in $\Lambda$ and characterized as
	\begin{align*}
		\fC
		=
		\set{\lambda\in\Lambda:\,R(P_\lambda^+(0))\cap N(P_\lambda^-(0))\neq\set{0}}
		=
		E^{-1}(\set{0}).
	\end{align*}
\end{proposition}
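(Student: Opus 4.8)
The plan is to prove the two displayed set equalities; the closedness of $\fC$ in $\Lambda$ then follows at once, since $E\colon\Lambda\to\R$ is continuous by \pref{propEvans} and $\fC=E^{-1}(\set{0})$ is the preimage of a closed set. I fix $\lambda\in\Lambda$ and abbreviate $X_+:=R(P_\lambda^+(0))$, $X_-:=N(P_\lambda^-(0))$; these subspaces are well defined by $(H_2)$, and $\dim X_+=r$, $\dim X_-=n$ with $r+n=d$ by \lref{lem50}. A point I would isolate first is that the three conditions
\[
    X_+\cap X_-=\set{0},\qquad X_++X_-=\R^d,\qquad X_+\oplus X_-=\R^d
\]
are mutually equivalent for every $\lambda\in\Lambda$: by \pref{propfred} the Fredholm index of $D_1G(0,\lambda)$ vanishes, and by step~(V) in the proof of \cite[Thm.~2.6]{poetzsche:skiba:24} this index equals $\dim(X_+\cap X_-)-\codim(X_++X_-)$, so $\dim(X_+\cap X_-)=\codim(X_++X_-)$; combining this with the dimension identity $\dim(X_++X_-)=\dim X_++\dim X_--\dim(X_+\cap X_-)$ closes the loop.

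Next I would establish $\set{\lambda\in\Lambda:\,X_+\cap X_-\neq\set{0}}=E^{-1}(\set{0})$ purely from the definition of the global Evans function. By construction $E(\lambda)$ is the determinant of the $d\times d$ matrix whose columns are the concatenation of the basis $\xi_1^+(\lambda),\dots,\xi_r^+(\lambda)$ of $X_+$ with the basis $\xi_1^-(\lambda),\dots,\xi_n^-(\lambda)$ of $X_-$. Hence $E(\lambda)\neq0$ iff these $d$ vectors are linearly independent, which by \lref{lem50} is equivalent to $X_+\oplus X_-=\R^d$, and by the previous paragraph this is equivalent to $X_+\cap X_-=\set{0}$. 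Negating gives the asserted equality.

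It remains to identify $\fC$, i.e.\ to show that $\phi_\lambda$ is non-hyperbolic on $\R$ exactly when $X_+\cap X_-\neq\set{0}$; this is the only step where dynamics, rather than linear algebra, enters. If $(V_\lambda)$ admits an exponential dichotomy on $\R$ with projector $P(\cdot)$, then its restrictions to $\R_\pm$ are exponential dichotomies whose invariant projectors have, at $t=0$, range $X_+$ (the range being uniquely determined on $\R_+$) and kernel $X_-$ (the kernel being uniquely determined on $\R_-$); since range and kernel of a projection intersect trivially, $X_+\cap X_-=R(P(0))\cap N(P(0))=\set{0}$. Conversely, if $X_+\cap X_-=\set{0}$, then $X_+\oplus X_-=\R^d$ by the first paragraph, and the Carath\'eodory analogue of Palmer's gluing theorem --- two halfline dichotomies as in $(H_2)$ combine to a dichotomy on all of $\R$ precisely under this complementarity, cf.~\cite{palmer:84,palmer:88} and the modifications due to \cite{poetzsche:12,poetzsche:skiba:24} --- produces an exponential dichotomy for $(V_\lambda)$ on $\R$, i.e.\ $\phi_\lambda$ is hyperbolic on $\R$. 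Hence $\lambda\in\fC$ iff $X_+\cap X_-\neq\set{0}$, which together with the preceding paragraph yields both equalities and, as already noted, the closedness of $\fC$ in $\Lambda$. (Alternatively, one can bypass this last argument via \cite[Thm.~2.4]{poetzsche:skiba:24}, which identifies hyperbolicity of $\phi_\lambda$ on $\R$ with $D_1G(0,\lambda)\in GL(W_0^{1,\infty}(\R),L_0^\infty(\R))$, followed by \pref{propEvans}.)

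The step I expect to be the main obstacle is the third one: quoting a roughness/gluing statement valid in the merely measurable-in-$t$ Carath\'eodory setting in exactly the form required, and making sure the passage between ``$X_+$ and $X_-$ complementary'' and ``$X_+\cap X_-$ trivial'' is legitimate --- which it is only because the Fredholm index of $D_1G(0,\lambda)$ vanishes on all of $\Lambda$, not merely at the distinguished $\lambda_0$ appearing in $(H_2)$.
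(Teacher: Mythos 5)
Your proposal is correct, and its skeleton differs from the paper's in an interesting way. The paper routes everything through the operator path $T(\lambda)=D_1G(0,\lambda)$: since $T(\lambda)$ is Fredholm of index $0$ (\pref{propfred}) and hyperbolicity of $\phi_\lambda$ on $\R$ is equivalent to $T(\lambda)\in GL(W_0^{1,\infty}(\R),L_0^\infty(\R))$, one gets $\fC=\set{\lambda:\,N(T(\lambda))\neq\set{0}}$; then $N(T(\lambda))\cong R(P_\lambda^+(0))\cap N(P_\lambda^-(0))$ by \cite[Step~(I) in the proof of Thm.~2.6]{poetzsche:skiba:24} gives the first equality, and \pref{propEvans} gives the second, with closedness from continuity of $E$ exactly as you argue. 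Your main route instead stays finite-dimensional and dynamical: you obtain $E^{-1}(\set{0})=\set{\lambda:\,X_+\cap X_-\neq\set{0}}$ by pure determinant linear algebra via \lref{lem50} together with the index-$0$ identity $\dim(X_+\cap X_-)=\codim(X_++X_-)$ (your remark that this identity must hold for \emph{all} $\lambda$, via \pref{propfred}, and not just at the $\lambda_0$ of $(H_2)$, is exactly the right caution), and you identify $\fC$ with $\set{\lambda:\,X_+\cap X_-\neq\set{0}}$ by restriction of a full-line dichotomy (uniqueness of the range on $\R_+$ and of the kernel on $\R_-$) and, conversely, by Palmer-type gluing of the two halfline dichotomies under complementarity. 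That gluing step is classical and extends to the Carath\'eodory setting, but it is the one ingredient not explicitly on record in this paper; the paper sidesteps it by citing \cite[Thm.~2.4]{poetzsche:skiba:24} (hyperbolicity on $\R$ $\Leftrightarrow$ invertibility of $T(\lambda)$), which is precisely the alternative you mention in your closing parenthesis — so your fallback coincides with the paper's argument, while your main route buys a more elementary, self-contained treatment of the Evans-function side at the cost of importing the dichotomy-gluing theorem in the merely measurable-in-$t$ setting.
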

\begin{proof}
	Above all, the continuous path of operators
	\begin{align}
		T:\Lambda&\to F_0(W_0^{1,\infty}(\R),L_0^\infty(\R)),&
		T(\lambda)&:=D_1G(0,\lambda)
		\label{tpath}
	\end{align}
	is fundamental for our further analysis; it is well-defined due to \pref{propfred}. Now let $\lambda\in\Lambda$ be given. Since each $T(\lambda)$ is Fredholm of index $0$, the set $\fC$ allows the characterization $\fC=\set{\lambda\in\Lambda:\,N(T(\lambda))\neq\set{0}}$. As in \cite[Step (I) in the proof of Thm.~2.6]{poetzsche:skiba:24} one sees that $N(T(\lambda))$ and $R(P_\lambda^+(0))\cap N(P_\lambda^-(0))$ are isomorphic, yielding the first characterization of $\fC$. Moreover, 
	$
		\fC
		=
		\set{\lambda\in\Lambda:\,N(T(\lambda))\neq\set{0}}
		=
		\set{\lambda\in\Lambda:\,E(\lambda)=0}
	$
	results with Props.~\ref{propfred} and \ref{propEvans}. Finally, because $E:\Lambda\to\R$ is continuous due to \pref{propEvans}, we identify $\fC$ as closed in $\Lambda$ as preimage of the closed set $\set{0}$. 
\end{proof}
\section{Global bifurcations of homoclinic solutions}
\label{sec5}
Under our standing Hypotheses~$(H_0$--$H_2)$ the parametrized Carath{\'e}odory equations \eqref{cde} possess a continuous branch $(\phi_\lambda)_{\lambda\in\Lambda}$ of bounded entire solutions. We denote the graph of $\lambda\mapsto\phi_\lambda$, namely the set (see \fref{fig2a})
$$
	\cT:=\set{(\phi_\lambda,\lambda)\in W^{1,\infty}(\R,\Omega)\tm\R:\,\lambda\in\Lambda}
$$
as \emph{prescribed branch} for \eqref{cde}. Note that the inclusion $\phi_\lambda\in W^{1,\infty}(\R,\Omega)$ holds because of \cite[Thm.~2.3]{poetzsche:skiba:24}. Here, and from now on, we indicate subsets of $W^{1,\infty}(\R)\tm\R$ by calligraphic letters. 

An entire solution $\phi^\ast:=\phi_{\lambda^\ast}$ of $(C_{\lambda^\ast})$ is said to \emph{bifurcate} at $\lambda^\ast\in\Lambda$ from $(\phi_\lambda)_{\lambda\in\Lambda}$ (or $\cT$), if there exists a parameter sequence $(\lambda_n)_{n\in\N}$ in $\Lambda$ converging to $\lambda^\ast$ such that each Carath{\'e}odory equation $(C_{\lambda_n})$ possesses a bounded entire solution $\psi_n\neq\phi_{\lambda_n}$ with
$$
	\lim_{n\to\infty}\sup_{t\in\R}\abs{\psi_n(t)-\phi_{\lambda^\ast}(t)}=0; 
$$
%\comment{\textcolor{blue}{I suggest replacing $W^{1,\infty}(\R)$ in Figure 1 by the letter $X$ since it is applied in our article to both cases: $W^{1,\infty}(\R)$ and 
%$W^{1,\infty}_0(\R)$, see also the next page. }}
here $(\phi^\ast,\lambda^\ast)$ is called \emph{bifurcation point} for \eqref{cde}, while $\lambda^\ast$ is a \emph{bifurcation value}. If
$$
	\fB:=\set{\lambda\in\Lambda:\,(\phi_\lambda,\lambda)\text{ is a bifurcation point for }\eqref{cde}},
$$
then thanks to \cite[Thm.~4.1]{poetzsche:skiba:24} the inclusion $\fB\subseteq\fC$ holds. We define (see \fref{fig2a})
\begin{multline*}
	\cS
	:=
	\set{(\phi,\lambda)\in W^{1,\infty}(\R)\tm\Lambda:\,\phi\text{ solves }\eqref{cde},\phi\neq\phi_\lambda\text{ and }\phi-\phi_\lambda\in W_0^{1,\infty}(\R)}\\
	\cup
	\set{(\phi_\lambda,\lambda)\in W^{1,\infty}(\R)\tm\R:\,\lambda\in\fC}
	\subseteq
	W^{1,\infty}(\R,\Omega)\tm\Lambda
\end{multline*}
as set of all bounded entire solutions to \eqref{cde} being homoclinic to $\phi_\lambda$, or being critical. Note that a pair $(\phi_{\lambda^\ast},\lambda^\ast)$ is a bifurcation point for \eqref{cde}, if every neighborhood of $(\phi_{\lambda^\ast},\lambda^\ast)$ contains an element of $\cS$. 
\begin{SCfigure}[2]
	\includegraphics[scale=0.5]{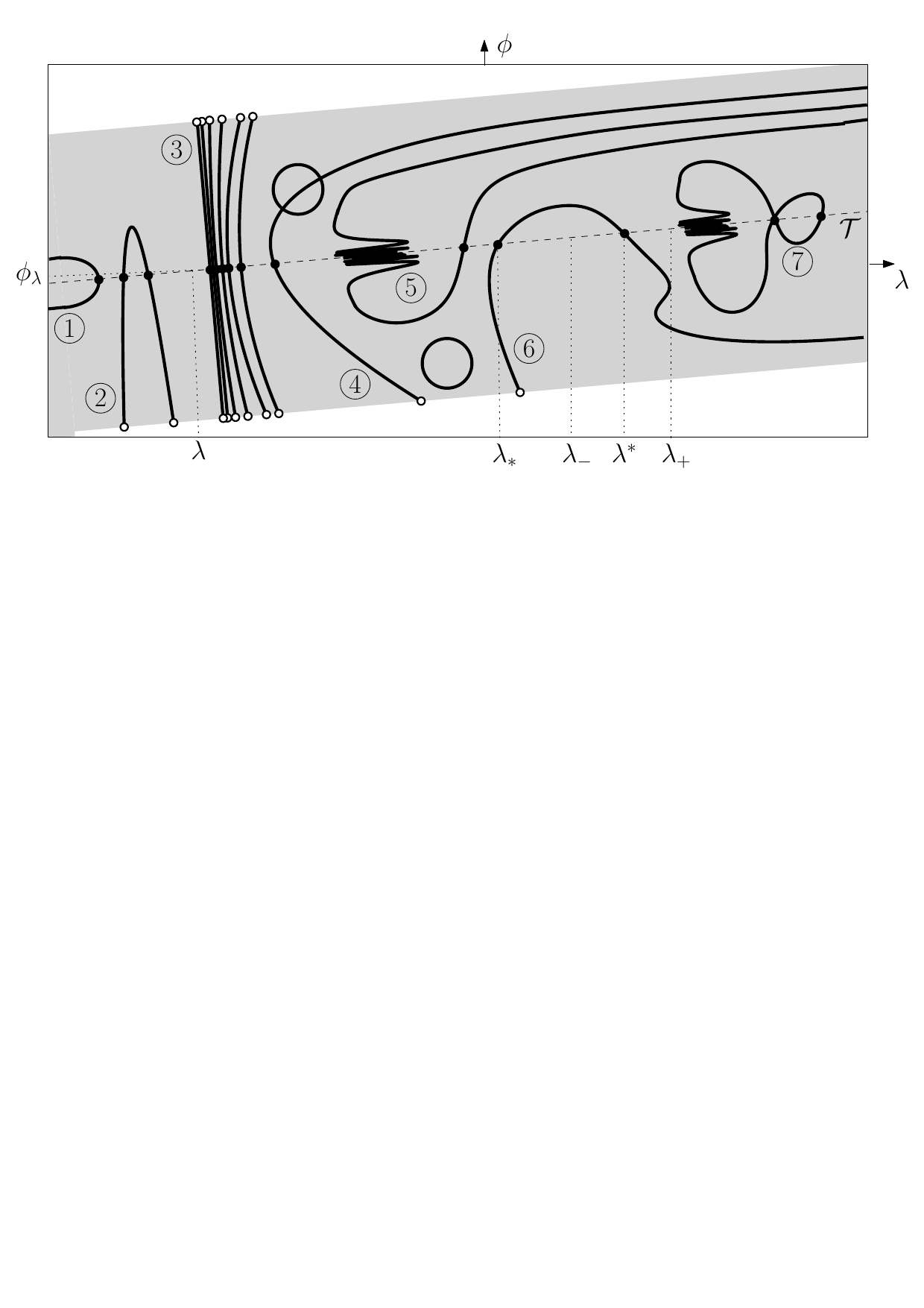}
	\caption{The sets $\cT$ of prescribed solutions $\phi_\lambda$ (dashed) and $\cS$ of all the non-prescribed solutions $\phi$ homoclinic to $\phi_\lambda$ (solid) for \eqref{cde} with $\Lambda=\R$, as well as an interval $(\lambda_-,\lambda_+)\subseteq\Lambda$ containing a bifurcation value $\lambda^\ast$.}
	\label{fig2a}
\end{SCfigure}

Eventually, having a global Evans function $E:\Lambda\to\R$ at hand, a sufficient condition for bifurcation, whose statement extends \cite[Thm.~4.2]{poetzsche:skiba:24}, is
\begin{theorem}[global bifurcation of solutions homoclinic to $\cT$]
	\label{thmbif}
	Let $(H_0$--$H_2)$ hold. If parameters $\lambda_-,\lambda_+\in\Lambda$ with $\lambda_-<\lambda_+$ satisfy $E(\lambda_-)E(\lambda_+)<0$, then there exists a bifurcation value $\lambda^\ast\in(\lambda_-,\lambda_+)$. More precisely, there exists a component $\cC$ of $\cS$ intersecting $\set{(\phi_\lambda,\lambda):\,\lambda\in(\lambda_-,\lambda_+)}\subseteq\cT$ in $(\phi_{\lambda^\ast},\lambda^\ast)$ such that at least one of the following alternatives (cf.~Fig.~\ref{fig2a}) holds:
	\begin{enumerate}
		\item[(a)] $\cC':=
        \{(\psi,\lambda)\in W_0^{1,\infty}(\R)\tm\Lambda:\,(\phi_\lambda+\psi,\lambda)\in\cC\}$ is noncompact in the set $U\subseteq W_0^{1,\infty}(\R)\tm\Lambda$ from \tref{thmgprop}(a), i.e.\ its closure $\overline{\cC'}$ in $W_0^{1,\infty}(\R)\tm\R$ does
		\begin{itemize}
			\item[$(a_1)$] contain a point $(x,\lambda)\in\partial U$, or 

			\item[$(a_2)$] is not a compact subset of $W_0^{1,\infty}(\R)\tm\R$, which in turn means that for each compact $K_0\subseteq W_0^{1,\infty}(\R)$ and for each compact $\Lambda_0\subseteq\R$ there exists a pair $(x,\lambda)\in\cC'\setminus(K_0\tm\Lambda_0)$, 
		\end{itemize}

		\item[(b)] $\cC$ contains a point $(\phi_{\lambda_\ast},\lambda_\ast)$ with $\lambda_\ast\in\fB\setminus[\lambda_-,\lambda_+]$. 
	\end{enumerate}
	In particular, for $\Omega=\R^d$ and $\Lambda=\R$ the alternative $(a)$ reduces to $(a_2)$. 
\end{theorem}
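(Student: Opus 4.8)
The plan is to recast the solutions homoclinic to $\phi_\lambda$ as the nontrivial zeros of the operator $G$, to identify the parity of the linearized path along the prescribed branch with the sign behaviour of the global Evans function, and then to feed this into the abstract parity-based global bifurcation alternative collected in the appendix. \emph{Step 1 (functional-analytic reformulation).} By \tref{thmchar}, under the substitution $\psi:=\phi-\phi_\lambda$ the entire solutions of \eqref{cde} homoclinic to $\phi_\lambda$ are exactly the zeros $(\psi,\lambda)$ of the continuously Fr{\'e}chet differentiable operator $G:U\to L_0^\infty(\R)$ of \tref{thmgprop} with $\psi\neq 0$, while the trivial branch $\set{0}\tm\Lambda$ corresponds to $\cT$ and $\set{(0,\lambda):\lambda\in\fC}$ to the critical part of $\cS$. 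Using the uniform continuity \eqref{h0c} (with $j=0$) together with Hypothesis~$(H_1)$ one checks that $\lambda\mapsto\phi_\lambda$ is continuous into $W^{1,\infty}(\R)$ — differentiate via $\dot\phi_\lambda(t)=f(t,\phi_\lambda(t),\lambda)$ — so that $(\psi,\lambda)\mapsto(\phi_\lambda+\psi,\lambda)$ is a homeomorphism of $U$ onto its image in $W^{1,\infty}(\R)\tm\R$; hence components of $\set{(\psi,\lambda)\in U:G(\psi,\lambda)=0,\,\psi\neq 0}\cup(\set{0}\tm\fC)$ correspond homeomorphically to components of $\cS$, and $\cC'$ is precisely the component through $(0,\lambda^\ast)$. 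Moreover $G(0,\cdot)\equiv 0$, $D_1G$ is continuous by \tref{thmgprop}(c), and by \pref{propfred} the path $T(\lambda):=D_1G(0,\lambda)$ from \eqref{tpath} consists of index-$0$ Fredholm operators.

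\emph{Step 2 (parity equals the Evans sign).} Since $E(\lambda_-)E(\lambda_+)<0$ forces $E(\lambda_\pm)\neq 0$, \pref{propEvans} gives $T(\lambda_-),T(\lambda_+)\in GL(W_0^{1,\infty}(\R),L_0^\infty(\R))$, so $T|_{[\lambda_-,\lambda_+]}$ is an admissible path and its parity $\sigma:=\sigma(T,[\lambda_-,\lambda_+])\in\set{\pm1}$ is defined. The key is to show
\[
	\sigma(T,[\lambda_-,\lambda_+])=\sgn\bigl(E(\lambda_-)E(\lambda_+)\bigr),
\]
extending \cite[Thm.~4.2]{poetzsche:skiba:24}: using the bases $\xi_i^+(\lambda)$ of $R(P_\lambda^+(0))$ and $\xi_j^-(\lambda)$ of $N(P_\lambda^-(0))$ from \lref{lem50} together with the reduction of $T(\lambda)$ near a crossing to the finite-dimensional operator on $R(P_{\lambda_0}^+(0))\cap N(P_{\lambda_0}^-(0))$ from \cite[proof of Thm.~2.6]{poetzsche:skiba:24}, one recognises $E$ as — up to a nonvanishing continuous factor — a determinant representing the parity locally; homotopy invariance and multiplicativity of the parity then propagate this over the whole interval, the basis-change factors being irrelevant for $\sigma$. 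In particular $E(\lambda_-)E(\lambda_+)<0$ yields $\sigma(T,[\lambda_-,\lambda_+])=-1\neq 1$.

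\emph{Step 3 (abstract alternative and back-translation).} With $\sigma(T,[\lambda_-,\lambda_+])$ nontrivial I apply the general global bifurcation theorem \tref{thmglobal} of the appendix to $G$ on the open set $U$ over $[\lambda_-,\lambda_+]$ — no properness of $G$ is needed for this weaker ``noncompact-or-return'' conclusion, properness entering only in \tref{thmglobal2CDE}. This produces a bifurcation value $\lambda^\ast\in(\lambda_-,\lambda_+)$ and forces the component $\cC'$ of $\set{(\psi,\lambda)\in U:G(\psi,\lambda)=0,\,\psi\neq0}\cup(\set{0}\tm\fC)$ through $(0,\lambda^\ast)$ to be either noncompact in $U$ — i.e.\ $\overline{\cC'}$ in $W_0^{1,\infty}(\R)\tm\R$ meets $\partial U$ (case $(a_1)$) or is not a compact subset of $W_0^{1,\infty}(\R)\tm\R$ (case $(a_2)$) — or else to contain $(0,\lambda_\ast)$ for some further bifurcation value $\lambda_\ast\notin[\lambda_-,\lambda_+]$. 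Since $(0,\lambda)$ is a bifurcation point for $G$ precisely when $(\phi_\lambda,\lambda)$ is one for \eqref{cde}, i.e.\ $\lambda\in\fB$, transporting the statement back through the homeomorphism of Step~1 gives the component $\cC$ of $\cS$ and the alternatives $(a)$, $(b)$. Finally, if $\Omega=\R^d$ the constraint $x(t)+\phi_\lambda(t)\in\Omega$ is vacuous, so $U=W_0^{1,\infty}(\R)\tm\Lambda$; if in addition $\Lambda=\R$, then $U=W_0^{1,\infty}(\R)\tm\R$ is the whole ambient space, $\partial U=\emptyset$, so $(a_1)$ is impossible and $(a)$ collapses to $(a_2)$.

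\emph{Main obstacle.} The crux is Step~2: pinning down that the topological parity of the path $T$ over $[\lambda_-,\lambda_+]$ is governed by the sign change of the merely continuous (non-differentiable) global Evans function $E$, with careful bookkeeping of the nonvanishing factors from different choices of the bases $\xi_i^\pm(\lambda)$ and of the orientation conventions entering the parity. A secondary technical point is verifying that \tref{thmglobal} indeed applies in the present generality without any properness hypothesis on $G$ and that its ``noncompact in $U$'' conclusion splits exactly into the sub-cases $(a_1)$ and $(a_2)$ as formulated.
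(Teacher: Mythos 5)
Your proposal is correct and follows essentially the same route as the paper: verify $(M_1$--$M_3)$ via \tref{thmgprop} and \pref{propfred}, deduce $\sigma(T,[\lambda_-,\lambda_+])=-1$ from the sign change of $E$ (using \pref{propEvans} for invertibility of the endpoints), apply the abstract \tref{thmglobal}, and translate back through \tref{thmchar}. The only divergence is your Step~2: the identity $\sigma(T,[\lambda_-,\lambda_+])=\sgn E(\lambda_-)\cdot\sgn E(\lambda_+)$ that you flag as the main obstacle is not re-proved in the paper either --- it is quoted directly from \cite[Thm.~3.6]{poetzsche:skiba:24}, so your sketched finite-dimensional-reduction argument can simply be replaced by that citation.
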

As explained in \cite{poetzsche:skiba:24}, the assumptions of \tref{thmbif} (and of the subsequent results in this section) can only be fulfilled in $d>1$ dimensions and for nontrivial projections, i.e.\ $P_\lambda^+(t),P_\lambda^-(t)\not\in\set{0_d,I_d}$. 

We emphasize that the alternatives in \tref{thmbif} are nonexclusive and the circled numbers in \fref{fig2a} illustrate possible shapes of the component $\cC'$: 
%comment{
%\textcolor{red}{RS: I made the corrections. }\newline
%\textcolor{blue}{RS: Does Set 3 from Figure 1 contain finitely many %components or infinitely many? If infinitely, then $(a_2)$ in 3: is OK; %otherwise, if, for instance, $X$ is finite-dimensional space, then Set 3 is %relatively compact and $(a_2)$ probably must be deleted.\newline RS: I %think Set 3 in Figure 1 satisfies $(a_1)$. Am I right?} 
%\newline {\color{red} IPL: I think set 3 is meant to represent infinitely %many otherwise I agree with you that $(a_2)$ is not satisfied. It does, %however seem to satisfy $(a_1)$. Maybe it needs a clarification or to leave %only $(a_1)$}
%\newline \textcolor{purple}
%{RS: I suggest to replace $\cC$ by $\cC'$ since $(a)$ concerns in Theorem %5.1 the component $\cC'$ and moreover $(b)$ can be formulated as follows $%\cC'$ contains a point $(0,\lambda_\ast)$ with $%\lambda_\ast\in\fB\setminus[\lambda_-,\lambda_+]$} }
\begin{align*}
  \begin{array}{llll}
    1: (a_2) & 2: (a_1) \&(b), &  3: (a_1) \& (a_2),  &  4: (a_1)\& (a_2),\\
    5: (a_2)\& (b), &   6: (a_1) \& (a_2)\& (b), & 7: (b). &  \\
  \end{array}
\end{align*}
\begin{proof}
	Our aim is to apply the abstract \tref{thmglobal} with the following Banach spaces $X:=W_0^{1,\infty}(\R)$, $Y:=L_0^\infty(\R)$ in order to describe the zeros of the operator $G:U\to L_0^\infty(\R)$ from \eqref{gdef}.

    First of all, due to \tref{thmgprop} the domain $U\subseteq W_0^{1,\infty}(\R)\tm\Lambda$ is nonempty, open and simply connected, while $G$ and $D_1G$ exist as continuous functions with $G(0,\lambda)\equiv 0$ on $\Lambda$, so that the assumptions $(M_1$--$M_2)$ in App.~\ref{appC} are fulfilled. From \pref{propfred} results the inclusion $D_1G(x,\lambda)\in F_0(W_0^{1,\infty}(\R),L_0^\infty(\R))$ for all $(x,\lambda)\in U$ and also $(M_3)$ holds. 

	Our assumption $E(\lambda_-)E(\lambda_+)<0$ implies $E(\lambda_-),E(\lambda_+)\neq 0$, \pref{propEvans} yields that the continuous path $T$ defined in \eqref{tpath} satisfies $T(\lambda)\in GL(W_0^{1,\infty}(\R),L_0^\infty(\R))$ for $\lambda\in\set{\lambda_-,\lambda_+}$. Therefore, $T$ has invertible endpoints, the parity $\sigma(T,[\lambda_-,\lambda_+])$  is well-defined (cf.~App.~\ref{appC}) and \cite[Thm.~3.6]{poetzsche:skiba:24} implies
	$$
        \sigma(T,[\lambda_-,\lambda_+])
        =
        \sgn E(\lambda_-)\cdot\sgn E(\lambda_+)=-1. 
    $$
    These preparations yield that \tref{thmglobal} applies to the concrete equation \eqref{abs} as defined in Sect.~\ref{sec2}. As a result, there exists a value $\lambda^\ast\in(\lambda_-,\lambda_+)$ and a maximal connected set $\cC'\subseteq W_0^{1,\infty}(\R)\tm\Lambda$ containing solutions to \eqref{abs} bifurcating in $(0,\lambda^\ast)$ from the trivial branch. Using \tref{thmchar}(b) the solutions to \eqref{abs} translate into bounded entire solutions to the Carath{\'e}odory equations \eqref{cde}. This implies the claim. In particular, (a) and (b) are immediate consequences of the respective alternatives stated in \tref{thmglobal}. 
\end{proof}

One can further specify the set $\cC'$ from the above alternative $(a)$. Here we use the projection $\pi_1:W_0^{1,\infty}(\R)\tm\R\to W_0^{1,\infty}(\R)$ defined in \eqref{noproj}: 
\begin{corollary}[structure of $\cC'$]
	\label{corbifODE}
    The set $\cC'$ in the alternative (a) of \tref{thmbif} is connected and has at least one of the following properties:
%    The set $\cC'$ from the alternative (a) of \tref{thmbif} is connected and its closure $\overline{\cC'}$ in $W_0^{1,\infty}(\R)\tm\R$ has at least one of the following properties:
    \begin{enumerate}
        \item[$(a'_1)$] $\overline{\cC'}\cap\partial U\neq\emptyset$, where $\overline{\cC'}$ denotes the closure in $W_0^{1,\infty}(\R)\tm\R$,

        \item[$(a'_2)$] $\cC'$ is unbounded,

        \item[$(a'_3)$] there exists an $\eps>0$ such that for each partition $\set{P_1,\ldots,P_n}$ of $\R$ there is an $j\in\set{1,\ldots,n}$ such that for any set $N\subseteq P_j$ of measure $0$ there exist $s,t\in P_j\setminus N$ and a function  $x\in\cC'$ satisfying $\abs{\dot x(t)-\dot x(s)}\geq\eps$,

        \item[$(a'_4)$] there exists an $\eps>0$ such that for each $T>0$ and every set $N\subseteq\R\setminus(-T,T)$ of measure $0$ there exists a $t\in\R\setminus((-T,T)\cup N)$ and a function $x\in\cC'$ satisfying $\abs{x(t)}\geq\eps$ or $\abs{\dot x(t)}\geq\eps$.
    \end{enumerate}
%    \begin{enumerate}
%        \item[$(a'_1)$] It is unbounded, 
%
%        \item[$(a'_2)$] $\pi_1\overline{\cC'}$ is not closed (this alternative cannot occur, if $\Lambda$ is bounded), 

%        \item[$(a'_3)$] $\pi_2\overline{\cC'}$ is not closed, 

%        \item[$(a'_4)$] there exists an $\eps>0$ such that for each partition $\set{P_1,\ldots,P_n}$ of $\R$ there is an $j\in\set{1,\ldots,n}$ such that for any set $N\subseteq P_j$ of measure $0$ there exist $s,t\in P_j\setminus N$ and a function  $x\in\pi_1\overline{\cC'}$ satisfying $\abs{\dot x(t)-\dot x(s)}\geq\eps$, 

%        \item[$(a'_5)$] there exists an $\eps>0$ such that for each $T>0$ and every set $N\subseteq\R\setminus(-T,T)$ of measure $0$ there exists a $t\in\R\setminus((-T,T)\cup N)$ and a function $x\in\pi_1\overline{\cC'}$ satisfying $\abs{x(t)}\geq\eps$ or $\abs{\dot x(t)}\geq\eps$. 
%    \end{enumerate}
\end{corollary}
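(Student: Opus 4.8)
The plan is to read off the four sub-alternatives as the exact logical unfolding of alternative $(a)$ in \tref{thmbif} by means of the compactness criterion \cref{corW1comb}. Connectedness of $\cC'$ needs no argument: in the proof of \tref{thmbif} the set $\cC'$ is produced by the abstract \tref{thmglobal} as a maximal connected subset of $W_0^{1,\infty}(\R)\tm\Lambda$, hence it is connected by construction. Thus only the dichotomy $(a_1)$/$(a_2)$ from \tref{thmbif} has to be refined.

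Alternative $(a_1)$, that $\overline{\cC'}$ meets $\partial U$, is literally $(a'_1)$, so assume $(a_2)$ holds, i.e.\ $\overline{\cC'}$ is not compact in $W_0^{1,\infty}(\R)\tm\R$. Writing $\pi_1$ for the projection onto $W_0^{1,\infty}(\R)$ and $\pi_2$ for the projection onto the $\R$-factor, I would invoke the elementary fact that a subset of a product of normed spaces is relatively compact precisely when both of its projections are; equivalently, $\cC'$ is relatively compact in $W_0^{1,\infty}(\R)\tm\R$ if and only if $\pi_1(\cC')$ is relatively compact in $W_0^{1,\infty}(\R)$ and $\pi_2(\cC')\subseteq\R$ is bounded. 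Since $(a_2)$ negates this, either $\pi_2(\cC')$ is unbounded --- whence $\cC'$ itself is unbounded and $(a'_2)$ holds --- or $\sF:=\pi_1(\cC')$ fails to be relatively compact in $W_0^{1,\infty}(\R)$.

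In the latter case \cref{corW1comb} forces at least one of its conditions (i), (ii), (iii) to fail for $\sF$. If (i) fails, $\sF$ and hence $\cC'$ is unbounded, so $(a'_2)$ holds. If (i) holds but (ii) fails, then negating ``for every $\eps>0$ there is a partition $\set{P_1,\ldots,P_n}$ of $\R$ such that for each $j$ and a.a.\ $s,t\in P_j$ one has $\abs{\dot x(t)-\dot x(s)}<\eps$ for all $x\in\sF$'' produces an $\eps>0$ for which no partition works; recalling that ``for a.a.\ $s,t\in P_j$'' there refers to a single null set valid uniformly over the family $\sF$, the failure on the offending block reads exactly as $(a'_3)$ (with ``$x\in\cC'$'' understood as $x\in\sF=\pi_1(\cC')$). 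Likewise, if (iii) fails, negating ``for every $\eps>0$ there is $T_\eps>0$ with $\max\set{\abs{x(t)},\abs{\dot x(t)}}<\eps$ for a.a.\ $\abs{t}\geq T_\eps$ and all $x\in\sF$'' yields an $\eps>0$ such that for every $T>0$ and every null set $N\subseteq\R\setminus(-T,T)$ there are $t\in\R\setminus((-T,T)\cup N)$ and $x\in\sF$ with $\abs{x(t)}\geq\eps$ or $\abs{\dot x(t)}\geq\eps$, which is $(a'_4)$. Collecting the cases proves the corollary.

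The step I expect to demand the most care is the logical negation of conditions (ii) and (iii) of \cref{corW1comb}: one must carry the measure-zero exceptional sets through correctly so that the outcome matches the deliberately verbose formulations of $(a'_3)$ and $(a'_4)$, and one must not inadvertently swap the quantifiers ``for all $x\in\sF$'' and ``for a.a.\ $s,t$''. The point is that in the compactness criterion the exceptional null sets are uniform over $x\in\sF$ by the very shape of the criterion, and it is exactly this uniformity that makes $(a'_3)$ and $(a'_4)$ come out as stated. Everything else --- connectedness, the product-compactness reduction, and the transfer of the relevant properties from $\sF=\pi_1(\cC')$ back to $\cC'$ --- is routine, the last because $\pi_1$ is a continuous linear surjection and $(a'_2)$--$(a'_4)$ are already phrased in terms of the first-coordinate functions.
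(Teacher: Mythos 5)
Your argument is correct and takes essentially the paper's route: both proofs rest on the compactness characterization of \cref{corW1comb}, with $(a'_3)$ and $(a'_4)$ arising precisely as the negations of its conditions (ii) and (iii), the exceptional null sets being uniform over the family exactly as you note. The only difference is organizational: the paper argues by contraposition (if $(a'_2)$--$(a'_4)$ all fail, then $\overline{\cC'}$ is compact in $W_0^{1,\infty}(\R)\tm\R$, and noncompactness of $\cC'$ in $U$ then forces $\overline{\cC'}\cap\partial U\neq\emptyset$), whereas you run the equivalent forward case analysis starting from the $(a_1)/(a_2)$ dichotomy of \tref{thmbif} combined with the projection criterion for relative compactness in the product.
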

\begin{proof}
    The connectedness of the set $\cC'$ is an immediate consequence of its construction in the proof of \tref{thmbif}.

 Assume  $(a'_2)$, $(a'_3)$ and $(a'_4)$ are not satisfied. Note that, from \cref{corW1comb}, $(a'_2)$, $(a'_3)$ and $(a'_4)$ characterize the options for $\overline{\cC'}$ to be non-compact in $W_0^{1,\infty}(\R)\tm\R$.
 We will show that $\overline{\cC'}$ satisfies $(a'_1)$.
    Indeed, \cref{corW1comb} implies that $\overline{\cC'}$ is compact in $W^{1,\infty}_0(\R)\times\R$, in particular in $\overline{U}$ (the closure of $U$ in $W_0^{1,\infty}(\R)\tm\R$). Since $\cC'$ is non-compact in $U$, it follows that $(\overline{\cC'}\setminus \cC')\cap U=\emptyset$. But $\overline{\cC'}\subset \overline{U}$ with $\cC'\subset U$, and hence $(\overline{\cC'}\setminus \cC')\cap \partial U\neq\emptyset$ since $\overline{U}=U\cup \partial U$.
    %Due to \lref{lemmaA} the alternative \tref{thmbif}(a) is equivalent to the fact that at least one of the images $\pi_1\overline{\cC'}\subseteq W_0^{1,\infty}(\R)$ or $\pi_2\overline{\cC'}\subseteq\overline{\Lambda}$ is not compact. Referring to the logical negation of the conditions from \cref{corW1comb} and $\pi_2\overline{\cC'}\subseteq\R$ this means that at least one of the following conditions holds:
    %\begin{align*}
    %    (i)\,  &\pi_1\overline{\cC'}\text{ is not closed},&
    %    (ii)\, &\pi_1\overline{\cC'}\text{ is not bounded},&
    %    (iii)\,&(a'_4)\text{ holds},\\
    %    (iv)\, & (a'_5)\text{ holds},&
    %    (v)\,  & \pi_2\overline{\cC'}\text{ is not closed}, &
    %    (vi)\, & \pi_2\overline{\cC'}\text{ is not bounded}. 
    %\end{align*}
    %The alternatives $(ii)$ and $(vi)$ obviously mean that $\cC'$ is unbounded, yielding $(a_1')$. Hence, it remains to show that boundedness of $\Lambda$ excludes the alternative $(a'_2)$ because then $\pi_1\overline{\cC'}$ is closed. Let $((x_n,\lambda_n))_{n\in\N}$ be a sequence in $\overline{\cC'}$ such that $(\pi_1(x_n,\lambda_n))_{n\in\N}$ converges to some $x\in W_0^{1,\infty}(\R)$, which is also the limit of $(x_n)_{n\in\N}$. Since $\overline{\Lambda}$ is compact, a subsequence $(\lambda_{k_n})_{n\in\N}$ converges to $\lambda\in\overline{\Lambda}$. Because $\pi_1$ is continuous, this implies the relation $x=\lim_{n\to\infty}\pi_1(x_{k_n},\lambda_{k_n})=\pi_1(x,\lambda)$. Hence, $x\in\pi_1\overline{\cC'}$ and $\pi_1\overline{\cC'}$ is closed. 
\end{proof}

We continue with a tightening of \tref{thmbif}(a).
\begin{theorem}[global bifurcation of solutions homoclinic to $\cT$ under admissibility]
	\label{thmglobal2CDE}
	Let $(H_0$--$H_2)$ hold and $\lambda_-,\lambda_+\in\Lambda$ with $\lambda_-<\lambda_+$. If $E(\lambda_-)E(\lambda_+)<0$ and $\alpha(\lambda)\cup\omega(\lambda)\subseteq\sH(\lambda)$ is admissible for all $\lambda\in\Lambda$, then the alternative $(a_2)$ in \tref{thmbif} simplifies to
	\begin{itemize}
		\item[$(a_2')$] $\cC'$ is unbounded. 
	\end{itemize}
	In particular, for $\Omega=\R^d$ and $\Lambda=\R$ the alternative $(a)$ reduces to $(a_2')$. 
\end{theorem}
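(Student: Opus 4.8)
The plan is to reprove \tref{thmbif} essentially verbatim, but to feed the abstract machinery one extra ingredient — \emph{properness} of $G$ — that becomes available precisely under the admissibility hypothesis, and which upgrades the relevant abstract global bifurcation statement from \tref{thmglobal} to its sharper counterpart \tref{thmglobal2} in App.~\ref{appC}.

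First I would note that the hypotheses of \tref{thmbif} carry over unchanged: by \tref{thmgprop}, \pref{propfred} and \pref{propEvans} the operator $G:U\to L_0^\infty(\R)$ of \eqref{gdef} satisfies $(M_1$--$M_3)$, and the sign condition $E(\lambda_-)E(\lambda_+)<0$ forces the path $T$ of \eqref{tpath} to have invertible endpoints with parity $\sigma(T,[\lambda_-,\lambda_+])=\sgn E(\lambda_-)\sgn E(\lambda_+)=-1$. The only genuinely new step is to invoke \pref{propprop}: since $\alpha(\lambda)\cup\omega(\lambda)$ is admissible for every $\lambda\in\Lambda$, the map $G$ is proper on every product $\sB\tm\Lambda_0\subset U$ with $\sB\subset W_0^{1,\infty}(\R)$ bounded and closed and $\Lambda_0\subseteq\R$ compact — exactly the hypothesis that distinguishes \tref{thmglobal2} from \tref{thmglobal}.

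With this in hand, \tref{thmglobal2} (in place of \tref{thmglobal}) applies to \eqref{abs} and yields a bifurcation value $\lambda^\ast\in(\lambda_-,\lambda_+)$ together with a connected set $\cC'\subseteq W_0^{1,\infty}(\R)\tm\Lambda$ of zeros of $G$ bifurcating from $(0,\lambda^\ast)$, for which the \emph{sharper} trichotomy holds: either $\overline{\cC'}$ meets $\partial U$, or $\cC'$ is unbounded, or $\cC'$ meets a second bifurcation point outside $[\lambda_-,\lambda_+]$. The improvement over \tref{thmglobal} is precisely that properness excludes a bounded component whose closure fails to be compact while staying away from $\partial U$: a bounded piece of $\cC'$ contained in a closed bounded box inside $U$ lies in $G^{-1}(\set{0})\cap(\sB\tm\Lambda_0)$, hence is relatively compact. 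Translating zeros of $G$ back into bounded entire solutions of \eqref{cde} homoclinic to $\phi_\lambda$ via \tref{thmchar}(b), and comparing with the alternatives of \tref{thmbif}, one sees that $(a_1)$ and $(b)$ are untouched while $(a_2)$ — the ``noncompact closure'' case — is replaced by ``$\cC'$ unbounded'', i.e.\ $(a_2')$. For the final assertion, $\Omega=\R^d$ and $\Lambda=\R$ give $U=W_0^{1,\infty}(\R)\tm\R$, whose boundary is empty, so $(a_1)$ cannot occur and $(a)$ reduces to $(a_2')$.

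The main obstacle is not in this concluding assembly, which is short, but already resides in the properness input \pref{propprop} established in Sect.~\ref{sec3}: the delicate point is controlling a bounded sequence of solutions $(x_n,\lambda_n)$ as $t\to\pm\infty$, by shifting along diverging times, extracting a limit $\bar x$ that solves a hull equation $\dot x=f_0(t,x)$ of the Bebutov flow over \eqref{cdep}, and using admissibility to force $\bar x$ to be constant with range in $Z$ — which is what furnishes the uniform essential vanishing at infinity needed by the compactness criterion \cref{corW1comb}. A secondary subtlety to flag is that \tref{thmglobal2} phrases properness relative to boxes $\sB\tm\Lambda_0$ contained in $U$, so whenever $\overline{\cC'}$ genuinely approaches $\partial U$ one stays in alternative $(a_1)$ rather than $(a_2')$; this is why $(a_1)$ must be retained in the general statement and disappears only in the case $\partial U=\emptyset$.
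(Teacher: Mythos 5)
Your proposal is correct and follows essentially the same route as the paper: it reuses the verification of $(M_1$--$M_3)$ and the parity computation $\sigma(T,[\lambda_-,\lambda_+])=-1$ from the proof of \tref{thmbif}, adds the properness of $G$ on bounded closed sets supplied by \pref{propprop} under the admissibility hypothesis, and then applies \tref{thmglobal2} in place of \tref{thmglobal}, with the case $\Omega=\R^d$, $\Lambda=\R$ handled via $\partial U=\emptyset$. The extra commentary on the internal structure of \pref{propprop} is accurate but not needed, since that proposition is already available.
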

\begin{proof}
	We intend to apply the abstract \tref{thmglobal2} in the setting provided in the above proof of \tref{thmglobal}. Thereto, it remains to establish that $G:U\to L_0^\infty(\R)$ is proper on closed and bounded subsets of $U$. Due to our admissibility assumption on $\alpha(\lambda)\cup\omega(\lambda)$, $\lambda\in\Lambda$,  this immediately results from \pref{propprop}. 
\end{proof}

\begin{figure}[h]
	\includegraphics[scale=0.5]{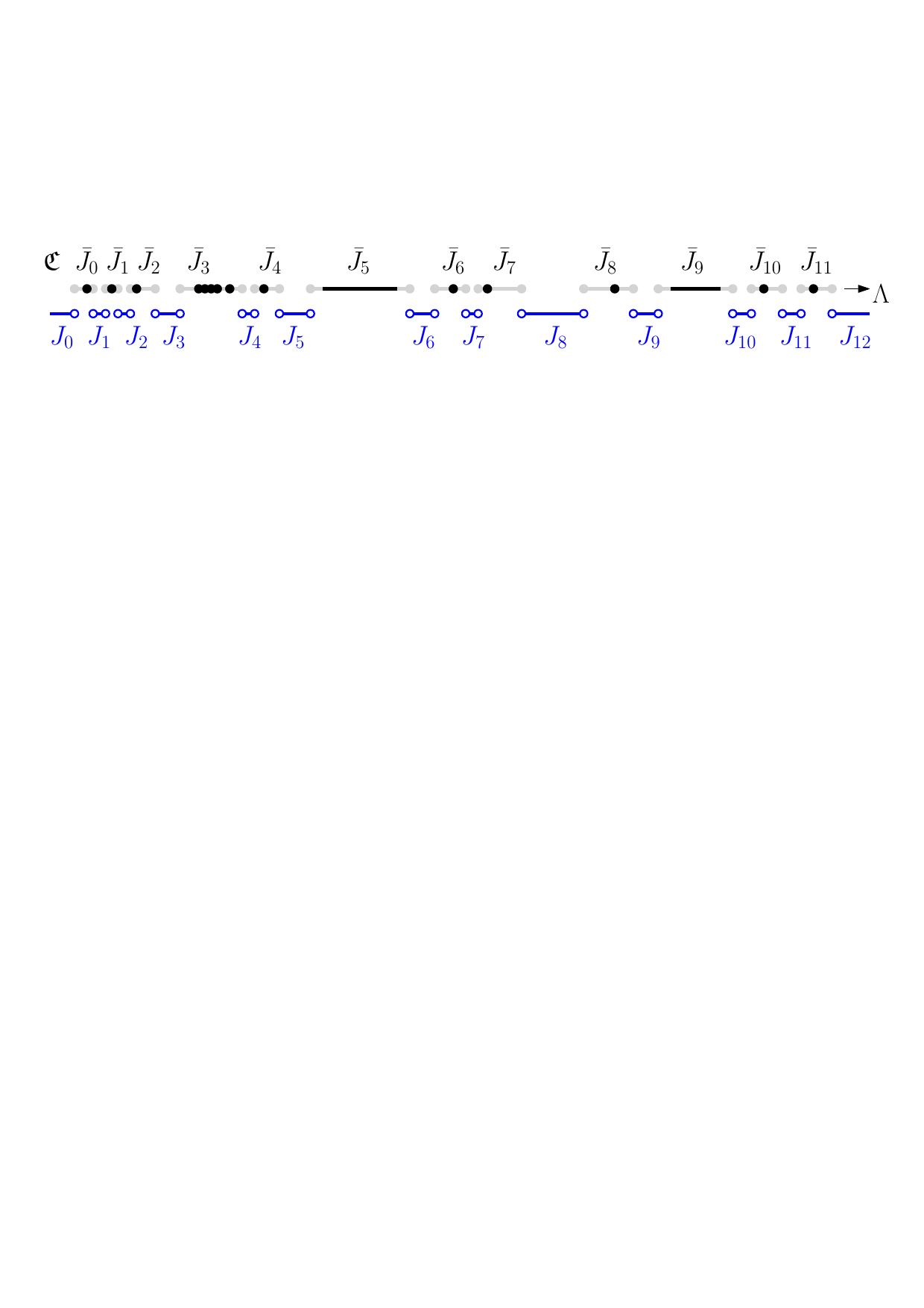}
	\caption{Open intervals $J_0,\ldots,J_{11}$ (blue) such that the family $J$ of compact intervals $\bar J_0,\ldots,\bar J_{10}$ (grey) covers the set $\fC$ of critical values (black) consisting of two intervals contained in $\bar J_5$ resp.\ in $\bar J_9$, an accumulation point contained in $\bar J_3$ and the remaining isolated points.}
	\label{fig3}
\end{figure}
A refinement of \tref{thmbif}(b) requires further preparations:  For this purpose, assume $\I\subseteq\Z$ is a nonempty set of consecutive integers and set $\I':=\set{i\in\I:i+1\in\I}$. Given the set $\fC$ of critical values for \eqref{var}, we assume a family $\set{J_i}_{i\in\I}$ of nonempty, open intervals $J_i\subseteq\R$ has the following properties (cf.~\cite[Lemma~2.1]{lopez:mora:04} and Fig.~\ref{fig3}):
\begin{itemize}
	\item[$(I_1)$] $J_i\cap\fC=\emptyset$ for all $i\in\I$, 

	\item[$(I_2)$] for each $i\in\I'$ one has $x_i<x_{i+1}$ for all $x_i\in J_i$, $x_{i+1}\in J_{i+1}$ such that the compact intervals $\bar J_i:=[\sup J_i,\inf J_{i+1}]$ satisfy $\fC\subseteq\bigcup_{i\in\I'}\bar J_i$; note that $\bar J_i$ might be a singleton, 

	\item[$(I_3)$] $\set{J_i}_{i\in\I}$ is \emph{locally finite}, i.e.\ any $x\in\R$ possesses a neighborhood $I$ so that the set of indices $\set{i\in\I:\,J_i\cap I\neq\emptyset}$ is finite. 
\end{itemize}
% \comment{IPL: In Figure 3, shouldn't the $\bar J_i$ be in fact $\cT_i$?
% \newline\textcolor{red}{RS: I think the picture is correct. Observe that $%\bar J_i$ is contained 
%in $\Lambda$, while $\cT_i$ is contained in $W^{1,%\infty}%(\R)\times\Lambda$}\newline
% IPL OK}
\begin{SCfigure}[2]
	\includegraphics[scale=0.5]{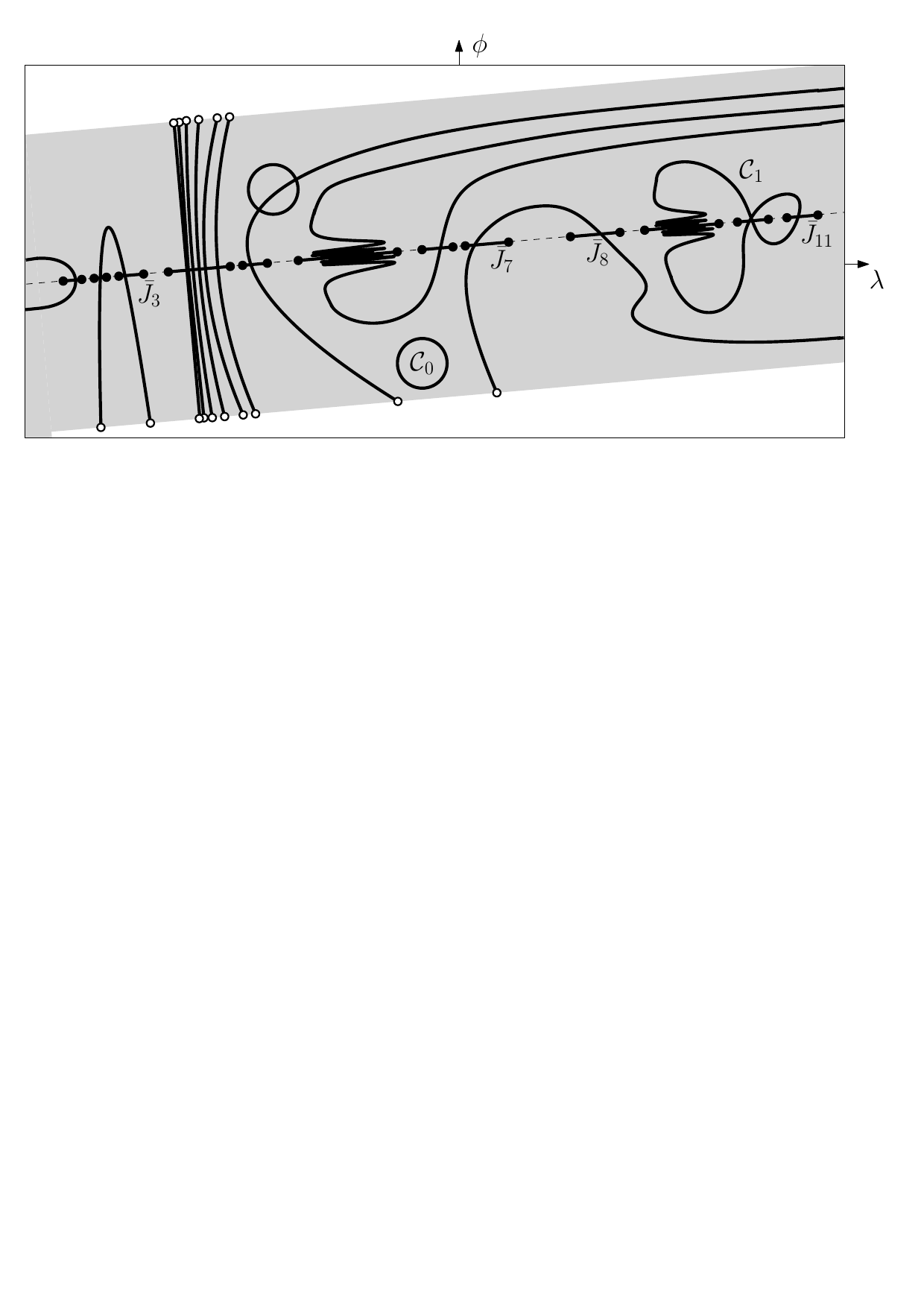}
	\caption{The set $\cS_J$ with $\Lambda=\R$. For the bounded components $\cC_0$ and $\cC_1$ of $\cS$ one obtains 
	\newline $I_J(\cC_0)=0$ and
	\newline
	$I_J(\cC_1)=\set{9,10,11}$
 }
	\label{fig4a}
\end{SCfigure}

We say such a family $J:=\set{\bar J_i}_{i\in\I'}$ of compact intervals \emph{covers} $\fC$ and note that it is locally finite thanks to \cite[Lemma~2.4]{lopez:mora:04}. Furthermore, we define the compact segments
$$
	\cT_i:=\set{(\phi_\lambda,\lambda):\,\lambda\in\bar J_i}
	\subseteq\cT
	\fall i\in\I'
$$
of the prescribed branch $\cT$, as well as the superset of $\cS$ given by (cf.~Fig.~\ref{fig4a}), 
\begin{multline*}
	\cS_J
	:=
	\set{(\psi,\lambda)\in W^{1,\infty}(\R,\Omega)\tm\R:\,\psi\text{ solves }\eqref{cde},\psi\neq\phi_\lambda\text{ and }\psi-\phi_\lambda\in W_0^{1,\infty}(\R)}\\
	\cup
	\bigcup_{i\in\I'}\cT_i, 
\end{multline*}
which is closed in $U$.
The compact sets $\cT_i$ may connect  several components of $\cS$ bifurcating at bifurcation values in $\bar J_i$ to a single component of $\cS_J$ (cf.~Figs.~\ref{fig2a} and~\ref{fig4a}). 

If $\cC\neq\emptyset$ is a connected component of $\cS$, we introduce the set 
$$
	I_J(\cC):=\set{i\in\I':\,\cC\cap\cT_i\neq\emptyset}
$$
of all indices $i$ for which solutions bifurcate from a set $\cT_i$ (cf.~Fig.~\ref{fig4a}). Note that $I_J(\cC)$ is finite for compact $\cC$. For each $i\in\I$ we choose a $\lambda_i\in J_i$ and introduce the $J$-\emph{parity map}
\begin{align*}
	\pi_J:\I'&\to\set{-1,0,1},&
	\pi_J(i)&:=\tfrac{1}{2}(\sgn E(\lambda_{i+1})-\sgn E(\lambda_i)),
\end{align*}
having the properties
\begin{align*}
	\pi_J(i)=-1&\Leftrightarrow E\text{ is positive on $J_i$ and negative on $J_{i+1}$},\\
	\pi_J(i)=0&\Leftrightarrow E\text{ has the same sign on $J_i$ and $J_{i+1}$},\\
	\pi_J(i)=1&\Leftrightarrow E\text{ is negative on $J_i$ and positive on $J_{i+1}$}.
\end{align*}
The values $\pi_J(i)$ hence indicate an (oriented) sign change in a global Evens function $E$, when $\lambda$ increases from the interval $J_i$ to $J_{i+1}$. In particular, $\pi_J(i)=\pm 1$ is sufficient for a bifurcation \cite[Thm.~4.1]{poetzsche:skiba:24}. Finally, summing over consecutive values of $\pi_J$ therefore counts the number of consecutive sign changes of $E(\lambda)$ as the parameter $\lambda$ increases through the parameter interval $\Lambda$ respecting the covering family $J$. 
\begin{theorem}[bounded components]
	\label{thmsignCDE}
	Let $(H_0$--$H_2)$ hold and $\lambda_-,\lambda_+\in\Lambda$, $\lambda_-<\lambda_+$. If $E(\lambda_-)E(\lambda_+)<0$, $\alpha(\lambda)\cup\omega(\lambda)\subseteq\sH(\lambda)$ is admissible for all $\lambda\in\Lambda$ and suppose a family $J$ covers $\fC$. If $\cC\neq\emptyset$ is a bounded component of $\cS_J$, then $\cC$ is compact in $U$ and
	\begin{equation}
		\sum_{i\in I_J(\cC)}\pi_J(i)=0.
		\label{thmsign1a}
	\end{equation}
\end{theorem}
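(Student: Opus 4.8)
The plan is to derive this from the abstract bifurcation theorem \tref{thmsign} of Appendix~\ref{appC}, applied --- just as in the proofs of \tref{thmbif} and \tref{thmglobal2CDE} --- to the operator $G:U\to L_0^\infty(\R)$ from \eqref{gdef} with the Banach spaces $X:=W_0^{1,\infty}(\R)$ and $Y:=L_0^\infty(\R)$. First I would record that the abstract hypotheses $(M_1$--$M_3)$ are in force: by \tref{thmgprop} the domain $U$ is nonempty, open and simply connected, and $G,D_1G$ exist as continuous functions with $G(0,\lambda)\equiv 0$ on $\Lambda$, while \pref{propfred} gives $D_1G(x,\lambda)\in F_0(W_0^{1,\infty}(\R),L_0^\infty(\R))$ throughout $U$. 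The admissibility assumption together with \pref{propprop} supplies the properness of $G$ on bounded closed subsets required by \tref{thmsign}. Finally, by \pref{propEvans} and \pref{propevans} the continuous path $T$ of \eqref{tpath} is invertible precisely where $E\neq 0$, so that the singular set of $T$ coincides with $\fC=E^{-1}(\set{0})$ and a family $J$ with the properties $(I_1$--$I_3)$ is exactly the covering data \tref{thmsign} asks for; the hypothesis $E(\lambda_-)E(\lambda_+)<0$ guarantees that $\cS_J$ genuinely branches off $\cT$.

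Next I would transfer the statement to the abstract setting. Via the affine change of variables $x=\psi-\phi_\lambda$, \tref{thmchar} identifies $\cS_J$ with the union of the zero set $G^{-1}(0)\subseteq U$ and the trivial segments $\set{0}\tm\bar J_i$, $i\in\I'$. Because $\lambda\mapsto\phi_\lambda$ is continuous not only in the sup-norm but in $W^{1,\infty}(\R)$ --- which follows from $(H_1)$ together with $\dot\phi_\lambda=f(\cdot,\phi_\lambda(\cdot),\lambda)$ and the uniform continuity in $(H_0)$ --- the map $(\psi,\lambda)\mapsto(\psi-\phi_\lambda,\lambda)$ is a homeomorphism that carries subsets lying over a bounded $\lambda$-range to subsets that are bounded (resp.\ relatively compact) if and only if the originals are, and it preserves the index sets $I_J$. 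Hence a bounded component $\cC$ of $\cS_J$ corresponds to a bounded component $\cC'$ of the abstract solution set with $I_J(\cC')=I_J(\cC)$, and \tref{thmsign} yields that $\cC'$ --- whence $\cC$ --- is compact in $U$, and that the oriented sum of the bifurcation indices that \tref{thmsign} attaches to the intervals $\bar J_i$, $i\in I_J(\cC)$, vanishes.

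It then remains to identify the index \tref{thmsign} assigns to $\bar J_i$ with $\pi_J(i)$. Choosing $\lambda_i\in J_i$ and $\lambda_{i+1}\in J_{i+1}$ --- regular points of $T$ since $E$ has no zero on $J_i\cup J_{i+1}$ --- the parity is computed by \cite[Thm.~3.6]{poetzsche:skiba:24} as $\sigma(T,[\lambda_i,\lambda_{i+1}])=\sgn E(\lambda_i)\cdot\sgn E(\lambda_{i+1})$; since $E$ has constant sign on each $J_i$, the oriented index built from this parity is independent of the chosen $\lambda_i,\lambda_{i+1}$ and equals $\tfrac12(\sgn E(\lambda_{i+1})-\sgn E(\lambda_i))=\pi_J(i)$. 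Substituting this into the vanishing oriented sum delivers \eqref{thmsign1a}.

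The main obstacle I anticipate is precisely this last bookkeeping: matching the sign and orientation conventions of the abstract bifurcation index in \tref{thmsign} with the Evans-function quantity $\pi_J$, and confirming that the covering-family axioms $(I_1$--$I_3)$ translate verbatim into the hypotheses \tref{thmsign} imposes on the complement of the singular set of $T$ (which in turn rests on $\fC$ being closed in $\Lambda$, \pref{propevans}). A secondary technical point needing care is that boundedness and compactness really do pass through the shift by $\phi_\lambda$, i.e.\ the upgrade from the sup-norm continuity of the branch in $(H_1)$ to $W^{1,\infty}$-continuity as indicated above.
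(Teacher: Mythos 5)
Your proposal is correct and takes essentially the same route as the paper: one verifies $(M_1$--$M_3)$ and the properness of $G$ exactly as in the proofs of Thms.~\ref{thmbif} and \ref{thmglobal2CDE}, identifies the abstract $J$-parity with $\pi_J$ via $\sigma(T,[\lambda_i,\lambda_{i+1}])=\sgn\bigl(E(\lambda_i)E(\lambda_{i+1})\bigr)$ from \cite[Thm.~3.6]{poetzsche:skiba:24} (the paper simply sets $a_i:=\sgn E(\lambda_i)$, which is compatible with the recursive definition of the $a_i$ up to the irrelevant global sign), and concludes with \tref{thmsign}. Your extra care concerning the shift $(\psi,\lambda)\mapsto(\psi-\phi_\lambda,\lambda)$ and the $W^{1,\infty}$-continuity of the prescribed branch makes explicit an identification the paper leaves implicit, but it does not change the argument.
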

Another interpretation of the condition \eqref{thmsign1a} is as follows: If there exists an $i\in I_J(\cC)$ with $\pi_J(i)=\pm 1$, then there is another index $j\in I_J(\cC)$ such that $\pi_J(j)=\mp 1$. In this case, the continuum $\cC$ connects the sets $\cT_i$ and $\cT_j$. In particular, there always exists an even number of indices $i\in I_J(\cC)$ with $\pi_J(i)\neq 0$. 
\begin{proof}
	As in the proofs of Thms.~\ref{thmbif} and \ref{thmglobal2CDE} above, one shows that the assumptions $(M_1$--$M_3)$ resp.\ the properness (on bounded and closed subsets) of $G:U\to L_0^\infty(\R)$ hold. Adjusting to the notation of App.~\ref{appC} we set $a_i:=\sgn E(\lambda_i)$ for $i\in\I$ and obtain that the resulting $J$-parity reads as $\pi_J(i)=\tfrac{1}{2}(\sgn E(\lambda_{i+1})-\sgn E(\lambda_i))=\tfrac{1}{2}(a_{i+1}-a_i)$ for each $i\in\I'$. Now due to \cite[Thm.~3.6]{poetzsche:skiba:24} the parity $\sigma$ of the path $T$ defined in \eqref{tpath} can be expressed using a global Evans function as
	$$
		\sigma(T,[\lambda_i,\lambda_{i+1}])=\sgn(E(\lambda_i)E(\lambda_{i+1}))\fall i\in\I'
	$$
	and consequently \tref{thmsign} implies the assertion. 
\end{proof}

\begin{corollary}
	\label{corsignCDE}
	Let $\Omega=\R^d$, $\Lambda=\R$, assume $\fC$ is a discrete set and $\lambda_-<\lambda_+$ are reals with $E(\lambda_-)E(\lambda_+)<0$. If $\cC$ denotes a continuum emanating from the segment $\set{(\phi_\lambda,\lambda):\,\lambda\in(\lambda_-,\lambda_+)}$ and one of the conditions
	\begin{itemize}
		\item[(i)] $\cC\cap\set{(\phi_\lambda,\lambda):\,\lambda\in\fC}$ is infinite, 

		\item[(ii)] $\sum_{i\in I_J(\cC)}\pi_J(i)\neq 0$
	\end{itemize}
	holds, then $\cC$ is unbounded. 
\end{corollary}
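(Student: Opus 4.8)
The plan is to argue by contraposition and reduce everything to \tref{thmsignCDE}, after fitting the situation into the covering-family framework that precedes that theorem. Throughout I would work, as in Thms.~\ref{thmglobal2CDE} and~\ref{thmsignCDE}, under the admissibility of $\alpha(\lambda)\cup\omega(\lambda)\subseteq\sH(\lambda)$ for every $\lambda\in\Lambda$, since \tref{thmsignCDE} is to be invoked. First I would collect the basic facts. By \tref{thmbif} the continuum $\cC$ is a connected component of $\cS$ through a bifurcation point $(\phi_{\lambda^\ast},\lambda^\ast)$ with $\lambda^\ast\in(\lambda_-,\lambda_+)$; by \pref{propevans} one has $\fC=E^{-1}(\set{0})$, so $\fC$ is closed in $\Lambda=\R$ and, being discrete, has no accumulation point in $\R$; moreover $\lambda_-,\lambda_+\notin\fC$ because $E(\lambda_\pm)\neq 0$.

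In case (i) the argument is direct and uses neither the covering family nor properness. Since $\lambda\mapsto(\phi_\lambda,\lambda)$ is injective, the hypothesis that $\cC\cap\set{(\phi_\lambda,\lambda):\,\lambda\in\fC}$ be infinite says that $\set{\lambda\in\fC:\,(\phi_\lambda,\lambda)\in\cC}$ is an infinite subset of $\fC$; as $\fC$ has no accumulation point in $\R$, such a subset must be unbounded, so the projection of $\cC$ onto the parameter axis is unbounded and $\cC$ is unbounded in $W^{1,\infty}(\R)\tm\R$.

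In case (ii) I would argue by contradiction, assuming $\cC$ bounded. Using that $\fC$ is discrete and closed in $\R$, I would choose the open intervals $J_i$ of the framework before \tref{thmsignCDE} to be the connected components of $\R\setminus\fC$, indexed by a set $\I\subseteq\Z$ of consecutive integers; then $(I_1$--$I_3)$ are immediate, and the compact intervals $\bar J_i=[\sup J_i,\inf J_{i+1}]$, $i\in\I'$, are exactly the singletons $\set{c_i}$ exhausting $\fC$, so $J:=\set{\bar J_i}_{i\in\I'}$ covers $\fC$. The decisive consequence of choosing singletons is that the segments $\cT_i=\set{(\phi_{c_i},c_i)}$ satisfy $\bigcup_{i\in\I'}\cT_i=\set{(\phi_\lambda,\lambda):\,\lambda\in\fC}$, whence $\cS_J=\cS$; in particular $\cC$, being a component of $\cS$, is a bounded component of $\cS_J$. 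Then \tref{thmsignCDE} applies and yields $\sum_{i\in I_J(\cC)}\pi_J(i)=0$, contradicting (ii). Hence $\cC$ is unbounded.

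All the estimates are routine; the one point that needs care --- and, if anything, the main obstacle --- is the reduction to the abstract \tref{thmsignCDE}: one must choose the covering family with singleton members so that $\cS_J$ collapses onto $\cS$, which is precisely where discreteness of $\fC$ is used and which guarantees that a connected component of $\cS$ really is a connected component of $\cS_J$.
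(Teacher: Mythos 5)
Your proof is correct and follows essentially the same route as the paper, which reduces the statement to the abstract \cref{corsign}: there case (i) is handled exactly by discreteness (plus closedness) of $\fC$, and case (ii) by assuming boundedness and contradicting the vanishing-sum conclusion of \tref{thmsign} (concretely, \tref{thmsignCDE}). Your explicit choice of the covering family as the singletons $\bar J_i=\set{c_i}$ arising from the components of $\R\setminus\fC$, with the observation that then $\cS_J=\cS$ so the continuum really is a bounded component of $\cS_J$, simply makes explicit a detail the paper leaves implicit (and matches the choice made in its Example~6.2(3)).
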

\begin{proof}
	Based on arguments from the proofs of Thms.~\ref{thmbif} and \ref{thmglobal2CDE} this readily follows from the abstract \cref{corsign}. 
\end{proof}
\section{Illustrations}
\label{sec6}
Let us illustrate the previous Sec.~\ref{sec5} by means of a simple parametrized Carath{\'e}odory equation. The subsequent result guarantees that its bifurcation behavior can be understood on basis of a simple algebraic equation in $\R$.
\begin{lemma}\label{lemsol}
	Let $\beta,\gamma\in\R$ and $n\in\N$. The solution of the planar Carath{\'e}odory equation
	\begin{equation}
		\dot x
		=
		\begin{pmatrix}
			-\sgn t & 0\\
			\gamma & \sgn t
		\end{pmatrix}x
		+
		\begin{pmatrix}
			0\\
			\beta x_1^n
		\end{pmatrix}
		\label{lemsol1}
	\end{equation}
	satisfying the initial condition $x(0)=\xi$ is given by
	$$
		\varphi(t;\xi)
		=
		\begin{pmatrix}
			e^{-\abs{t}}\xi_1\\
			\intcc{\xi_2+\sgn t\intoo{\tfrac{\gamma}{2}\xi_1+\tfrac{\beta}{n+1}\xi_1^n}}e^{\abs{t}}
			-
			\sgn t\intoo{\tfrac{\gamma}{2}\xi_1^2e^{-t}+\tfrac{\beta}{n+1}\xi_1^ne^{-n|t|}}
		\end{pmatrix}
	$$
	for all $t\in\R$ and initial values $\xi\in\R^2$. Furthermore, the following holds:
	\begin{equation}
		\varphi(\cdot;\xi)\in L_0^\infty(\R)
		\quad\Leftrightarrow\quad
		\xi_1\intoo{\gamma+\tfrac{2\beta}{n+1}\xi_1^{n-1}}=0
		\text{ and }\xi_2=0.
		\label{lemsol3}
	\end{equation}
\end{lemma}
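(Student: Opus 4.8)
The plan is to integrate \eqref{lemsol1} by hand, exploiting its triangular structure, and then read \eqref{lemsol3} off the resulting closed form. First I would treat the decoupled first line $\dot x_1=-(\sgn t)x_1$: integrating separately on $\R_+$ and $\R_-$ and noting the two branches agree at $t=0$, the continuous (locally absolutely continuous) function $t\mapsto\xi_1 e^{-\abs{t}}$ satisfies the associated Volterra identity, and since the right-hand side of \eqref{lemsol1} is piecewise constant --- hence measurable --- in $t$, polynomial in $x$ and locally bounded, the Carath\'eodory existence-and-uniqueness theory recalled in Sect.~\ref{sec2} identifies it as \emph{the} solution through $\xi_1$; it is entire because the displayed formula lives on all of $\R$.

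Next I would substitute $x_1(t)=\xi_1 e^{-\abs{t}}$ into the second line, which becomes the inhomogeneous scalar linear equation $\dot x_2=(\sgn t)x_2+g(t)$ with $g(t):=\gamma\xi_1 e^{-\abs{t}}+\beta\xi_1^n e^{-n\abs{t}}$. The transition map of $\dot y=(\sgn t)y$ is $\Psi(t,s)=e^{\abs{t}-\abs{s}}$ (checked by differentiating on each halfline), so variation of constants gives $x_2(t)=e^{\abs{t}}\xi_2+\int_0^t e^{\abs{t}-\abs{s}}g(s)\d s$. The one computation required is $\int_0^t e^{-k\abs{s}}\d s=\sgn(t)\tfrac{1-e^{-k\abs{t}}}{k}$, applied with $k=2$ to the $\gamma$-part and $k=n+1$ to the $\beta$-part; using $e^{\abs{t}}e^{-k\abs{t}}=e^{-(k-1)\abs{t}}$ this collects precisely into the stated expression for $\varphi(\cdot;\xi)$. (Alternatively one may just differentiate the displayed $\varphi$, verify $\varphi(0;\xi)=\xi$ and that it solves \eqref{lemsol1}, and invoke uniqueness once more.)

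For the equivalence \eqref{lemsol3} I would argue as follows: the first component $\xi_1 e^{-\abs{t}}$ always lies in $C_0(\R)$, and in the second component the term $-\sgn(t)\bigl(\tfrac{\gamma}{2}\xi_1 e^{-\abs{t}}+\tfrac{\beta}{n+1}\xi_1^n e^{-n\abs{t}}\bigr)$ likewise tends to $0$ at $\pm\infty$. Hence $\varphi(\cdot;\xi)\in L_0^\infty(\R)$ iff the remaining contribution $\bigl[\xi_2+\sgn(t)\bigl(\tfrac{\gamma}{2}\xi_1+\tfrac{\beta}{n+1}\xi_1^n\bigr)\bigr]e^{\abs{t}}$ stays bounded --- when its bracket vanishes the whole solution tends to $0$ at $\pm\infty$, otherwise $x_2$ is unbounded. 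Since $e^{\abs{t}}\to\infty$, boundedness forces the bracket to vanish for $t>0$ and for $t<0$, i.e.\ both $\xi_2+\tfrac{\gamma}{2}\xi_1+\tfrac{\beta}{n+1}\xi_1^n=0$ and $\xi_2-\tfrac{\gamma}{2}\xi_1-\tfrac{\beta}{n+1}\xi_1^n=0$; adding yields $\xi_2=0$ and subtracting yields $\gamma\xi_1+\tfrac{2\beta}{n+1}\xi_1^n=\xi_1\bigl(\gamma+\tfrac{2\beta}{n+1}\xi_1^{n-1}\bigr)=0$, which is \eqref{lemsol3}. Conversely, these two relations make the bracket vanish identically, so then $\varphi(\cdot;\xi)\in C_0(\R)\subseteq L_0^\infty(\R)$.

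The argument is entirely elementary, so there is no real ``hard part''; the only points demanding a little care are the jump of $\sgn$ at $t=0$ --- one must confirm that the two halfline pieces take the same value there (both equal $\xi$) so that the glued function is a genuine entire Carath\'eodory solution, and keep track of the $\sgn t$ factors this generates --- and, for the necessity half of \eqref{lemsol3}, noticing that mere essential boundedness of $\varphi(\cdot;\xi)$ already suffices to kill the $e^{\abs{t}}$ terms.
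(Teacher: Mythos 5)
Your proposal is correct and follows essentially the same route as the paper: solve the decoupled first component, insert it into the second equation, apply variation of constants with the transition factor $e^{\abs{t}-\abs{s}}$, and read off \eqref{lemsol3} from the behaviour at $\pm\infty$ of the $e^{\abs{t}}$-term. Incidentally, your computation produces $\tfrac{\gamma}{2}\xi_1 e^{-\abs{t}}$ in the second component, which is the correct expression; the $\tfrac{\gamma}{2}\xi_1^2e^{-t}$ printed in the lemma is evidently a typo, and the equivalence \eqref{lemsol3} is unaffected since that term vanishes at $\pm\infty$ anyway.
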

\begin{proof}
	Let $\xi\in\R^2$. The first equation in the system \eqref{lemsol1} can be solved independently yielding $\varphi_1(t;\xi)=e^{-\abs{t}}\xi_1$. If we insert this into the second equation of \eqref{lemsol1}, then the variation of constants formula implies the stated expression for $\varphi_2(t;\xi)$. From this 
	\begin{align*}
		\lim_{t\to\infty}\varphi(t;\xi)&=0
		\quad\Leftrightarrow\quad
		\xi_2=-\tfrac{\gamma}{2}\xi_1-\tfrac{\beta}{n+1}\xi_1^n,\\
		\lim_{t\to-\infty}\varphi(t;\xi)&=0
		\quad\Leftrightarrow\quad
		\xi_2=\tfrac{\gamma}{2}\xi_1+\tfrac{\beta}{n+1}\xi_1^n
	\end{align*}
	hold, which leads to the claimed equivalence \eqref{lemsol3}. 
\end{proof}

\begin{example}\label{exbif}
	Let $\gamma:\Lambda\to\R$ be a continuous function and assume $\beta\in\R\setminus\set{0}$, $n\in\N\setminus\set{1}$ are fixed. The planar Carath{\'e}odory equation
	\begin{equation}
		\dot x
		=
		\begin{pmatrix}
			-\sgn t & 0\\
			\gamma(\lambda) & \sgn t
		\end{pmatrix}x
		+
		\begin{pmatrix}
			0\\
			\beta x_1^n
		\end{pmatrix}
		\label{exsol1}
	\end{equation}
	possesses the continuous trivial branch $\phi_\lambda(t):\equiv 0$ of bounded entire solution, i.e.\ one has $\cT=\set{0}\tm\Lambda\subseteq W^{1,\infty}(\R)\tm\R$. On the one hand, using \lref{lemsol} we are able to locate the nontrivial solutions of \eqref{exsol1} being homoclinic to $\phi_\lambda$ by means of the equations
	\begin{align*}
		\xi_1^{n-1}&=-\tfrac{n+1}{2\beta}\gamma(\lambda),&
		\xi_2&=0
	\end{align*}
	determining their corresponding initial condition $x(0)=\xi\in\R^2$ (cf.~\eqref{lemsol3}). On the other hand, linearizing \eqref{exsol1} along the branch $\phi_\lambda$ yields lower triangular variational equations
	$$
		\dot x
		=
		\begin{pmatrix}
			-\sgn t & 0\\
			\gamma(\lambda) & \sgn t
		\end{pmatrix}x
	$$
	having exponential dichotomies on both $\R_+$ and $\R_-$, whose projections satisfy
	\begin{align*}
		R(P_\lambda^+(0))&=\R\binom{-2}{\gamma(\lambda)},&
		N(P_\lambda^-(0))&=\R\binom{2}{\gamma(\lambda)}\fall\lambda\in\Lambda.
	\end{align*}
 Therefore, the global Evans function 
	$$
		E(\lambda)
		=
		\det\begin{pmatrix}
			-2 & 2\\
			\gamma(\lambda) & \gamma(\lambda)
		\end{pmatrix}
		=
		-4\gamma(\lambda)\fall\lambda\in\Lambda
	$$
	first provides the critical values $\fC=\gamma^{-1}(0)$ due to \pref{propevans} and second a sufficient condition for bifurcations in \eqref{exsol1} in terms of sign changes of the coefficient function $\gamma$ according to \cite[Thm.~4.2]{poetzsche:skiba:24}. This criterion is more general than e.g.\ the local bifurcation condition \cite[Thm.~4.1]{poetzsche:12}, which requires $\gamma$ to be of class $C^2$, since we are actually in the scope of \tref{thmbif}. Beyond that, because the right-hand side of \eqref{exsol1} is piecewise autonomous, the limit sets $\alpha(\lambda)$ and $\omega(\lambda)$ of the Bebutov flow are singletons containing the respective functions
    \begin{align*}
        f^-(x,\lambda)&:=
		\begin{pmatrix}
			1 & 0\\
			\gamma(\lambda) & -1
		\end{pmatrix}x
		+
		\begin{pmatrix}
			0\\
			\beta x_1^n
		\end{pmatrix},&
        f^+(x,\lambda)&:=
		\begin{pmatrix}
			-1 & 0\\
			\gamma(\lambda) & 1
		\end{pmatrix}x
		+
		\begin{pmatrix}
			0\\
			\beta x_1^n
		\end{pmatrix}.
    \end{align*}
    From $f^-(x,\lambda)=0\Leftrightarrow x=0$ and $f^+(x,\lambda)=0\Leftrightarrow x=0$ one obtains the compact and totally disconnected set $Z_{\alpha(\lambda)\cup\omega(\lambda)}=\set{0}$. Moreover, the autonomous ordinary differential equations $\dot x=f^-(x,\lambda)$ and $\dot x=f^+(x,\lambda)$ induce the respective flows
    \begin{align*}
        \varphi_\lambda^-(t;\xi)&=
        \begin{pmatrix}
            e^t\xi_1\\
            e^{-t}\xi_2-\gamma(\lambda) te^{-t}\xi_1+\beta\frac{e^{-t}-e^{-nt}}{n-1}\xi_1^n
        \end{pmatrix},\\
        \varphi_\lambda^+(t;\xi)&=
        \begin{pmatrix}
            e^{-t}\xi_1\\
            e^t\xi_2-\gamma(\lambda) te^t\xi_1-\beta\frac{e^t-e^{nt}}{n-1}\xi_1^n
        \end{pmatrix},
    \end{align*}
    which both are bounded on $\R$, precisely for the initial value $\xi=0$. In conclusion, the union $\alpha(\lambda)\cup\omega(\lambda)$ is admissible for all $\lambda\in\Lambda$. For this reason, the more specific Thms.~\ref{thmglobal2CDE}, \ref{thmsignCDE} and \cref{corsignCDE} apply. We discuss several examples: 

	(1) For $\Lambda=\R$, $\gamma(\lambda):=\lambda$ the critical values $\fC=\set{0}$ follow. The global Evans function $E(\lambda)=-4\lambda$ changes sign at $0$ and hence \tref{thmbif} guarantees a bifurcation at $\lambda^\ast=0$. In fact $(0,0)$ is the only bifurcation point for equation \eqref{exsol1}, $\fB=\fC=\set{0}$, and more precisely, one has (cf.\ Fig.~\ref{figbif1}):
	\begin{figure}[ht]
		\includegraphics[scale=0.59]{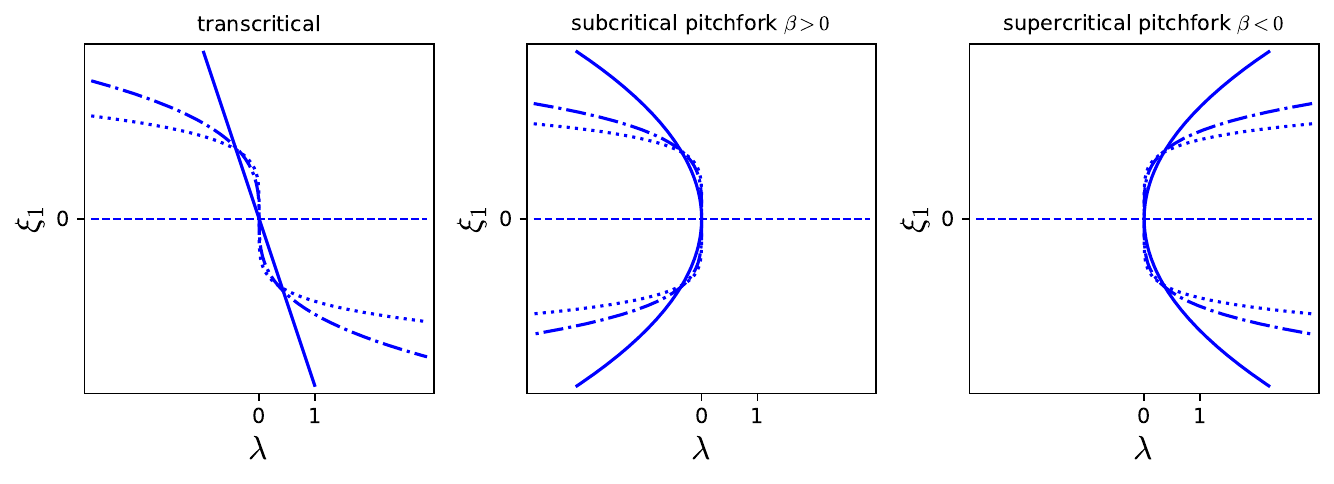}
		\caption{Bifurcating branches for $\gamma(\lambda):=\lambda$ in \eref{exbif}(1): 
		\newline
		left: Transcritical bifurcation for $n=2$ (solid), $n=4$ (dashed), $n=6$ (dotted) and $\beta>0$
		\newline
		center and right: Pitchfork bifurcations for $n=3$ (solid), $n=5$ (dashed), $n=7$ (dotted)}
		\label{figbif1}
	\end{figure}
	\begin{itemize}
		\item If $n$ is even, then there is a transcritical bifurcation with the emanating branch 
		$$
			\cC=\cC'=\set{\intoo{\varphi\intoo{\cdot;\sqrt[n-1]{-\tfrac{n+1}{2\beta}\lambda},0},\lambda}\mid\lambda\in\R}
		$$
        of entire solutions of \eqref{exsol1} homoclinic to $0$. 

		\item If $n$ is odd, then a pitchfork bifurcation occurs, which is subcritical for $\beta>0$ with emanating branch
		$$
			\cC=\cC'=\set{\intoo{\varphi\intoo{\cdot;\pm\sqrt[n-1]{-\tfrac{n+1}{2\beta}\lambda},0},\lambda}\mid\lambda\leq 0}
		$$
		and supercritical for $\beta<0$ with emanating branch
		$$
			\cC=\cC'=\set{\intoo{\varphi\intoo{\cdot;\pm\sqrt[n-1]{-\tfrac{n+1}{2\beta}\lambda},0},\lambda}\mid\lambda\geq 0}. 
		$$
	\end{itemize}
	Throughout, the branches of nontrivial homoclinic solutions are unbounded in the $x$- and $\lambda$-direction. Thus, the alternative $(a_2')$ of \tref{thmglobal2CDE} and \cref{corbifODE}$(a_2')$ is covered. 

	(2) For the continuous function $\gamma(\lambda):=\abs{\lambda}$ on $\Lambda=\R$ we also obtain $\fC=\set{0}$ as critical values. Now the Evans function $E(\lambda)=-4\abs{\lambda}$ has a zero at $0$, but does not change sign and neither \tref{thmbif} nor \ref{thmglobal2CDE} do apply. Nevertheless, nontrivial homoclinic solutions might bifurcate from $\cT$ in $(0,0)$ (cf.\ Fig.~\ref{figbif2}):
	\begin{figure}[ht]
		\includegraphics[scale=0.59]{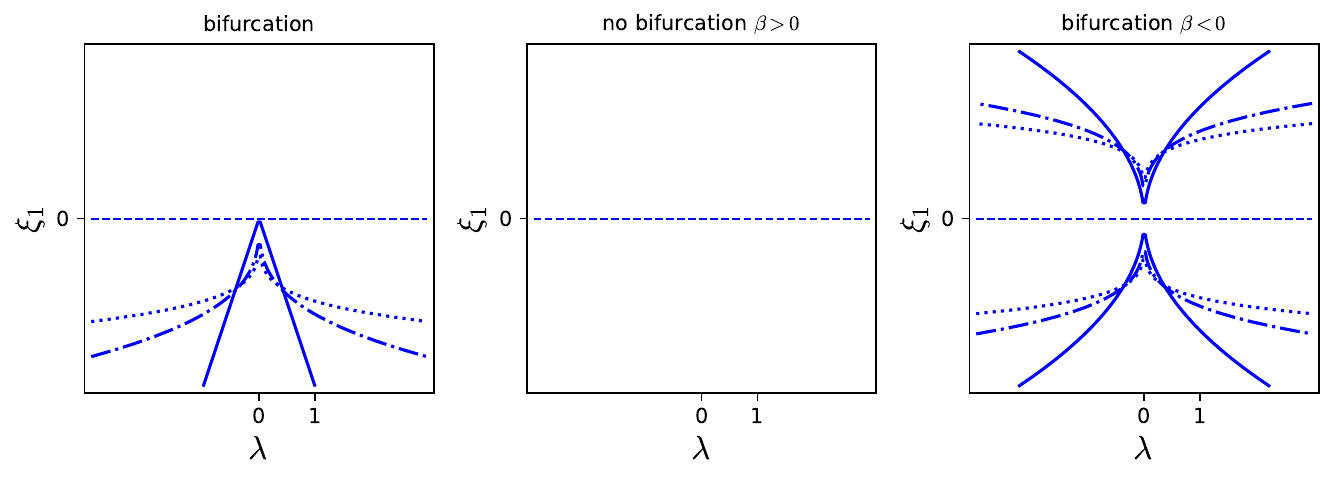}
		\caption{Bifurcating branches for $\gamma(\lambda):=\abs{\lambda}$ in \eref{exbif}(2):
		\newline
		left: Bifurcation for $n=2$ (solid), $n=4$ (dashed), $n=6$ (dotted) and $\beta>0$
		\newline
		center and right: No bifurcation resp.\ bifurcation for $n=3$ (solid), $n=5$ (dashed), $n=7$ (dotted)}
		\label{figbif2}
	\end{figure}
	\begin{itemize}
		\item If $n$ is even, then the nontrivial branch is unbounded and reads as
		$$
			\cC=\cC'=\set{\intoo{\varphi\intoo{\cdot;\sqrt[n-1]{-\tfrac{n+1}{2\beta}\abs{\lambda}},0},\lambda}\mid\lambda\in\R}.
		$$

		\item If $n$ is odd, then there exists no nontrival bifurcating branch for $\beta>0$. For $\beta<0$ however, the set
		$$
			\cC=\cC'=\set{\intoo{\varphi\intoo{\cdot;\pm\sqrt[n-1]{-\tfrac{n+1}{2\beta}\abs{\lambda}},0},\lambda}\mid\lambda\in\R}
		$$
		is the union of two  nontrivial and unbounded branches intersecting at $(0,\lambda)$. 
	\end{itemize}
	This demonstrates that a sign change in an Evans function is a sufficient (for this, see \cite[Thm.~4.2]{poetzsche:skiba:24}), but not a necessary bifurcation criterion. 

	(3) For the $2\pi$-periodic function $\gamma(\lambda):=\sin\lambda$ on $\Lambda=\R$ countably many critical values $\fC:=\set{\mu_i:\,i\in\Z}$ with $\mu_i:=\pi i$ follow. Because the global Evans function $E(\lambda)=-4\sin\lambda$ changes sign at each $\mu_i$, $i\in\Z$, \tref{thmbif} implies bifurcations and thus $\fB=\fC=\set{\mu_i:\,i\in\Z}$. More detailed (cf.\ Fig.~\ref{figbif3}): 
	\begin{figure}[ht]
		\includegraphics[scale=0.59]{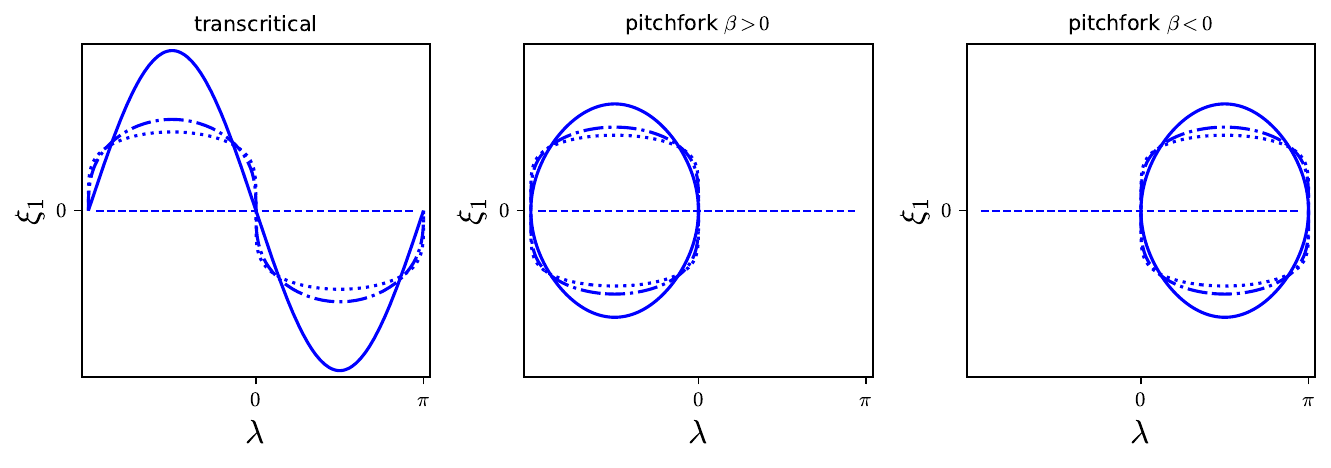}
		\caption{Bifurcating branches for $\gamma(\lambda):=\sin\lambda$ in \eref{exbif}(3): 
		\newline
		left: Transcritical bifurcation for $n=2$ (solid), $n=4$ (dashed), $n=6$ (dotted)
		\newline
		center and right: Pitchfork bifurcations for $n=3$ (solid), $n=5$ (dashed), $n=7$ (dotted)}
		\label{figbif3}
	\end{figure}
	\begin{itemize}
		\item If $n$ is even, then $\cC$ is an unbounded branch and $2\pi$-periodic in $\lambda$ given by
		$$
			\cC=\cC'=
			\set{\intoo{\varphi\intoo{\cdot;\sqrt[n-1]{-\tfrac{n+1}{2\beta}\sin\lambda},0},\lambda}:\lambda\in\R}, 
		$$ 
		which bifurcates transcritically from the trivial branch $\cT$ at each $\mu_i$. We are in the situation of \tref{thmglobal2CDE}$(a_2')$ and \cref{corbifODE}$(a_2')$. The fact that $\cC$ is unbounded also results from \cref{corsignCDE}(i) since $\cC\cap\set{(0,\mu_i):\,i\in\Z}=\set{(0,\mu_i):\,i\in\Z}$ is infinite. 

		\item If $n$ is odd, then there are pitchfork bifurcations along the trivial branch $\cT$ at each $\mu_i$, $i\in\Z$. For $\beta>0$ they are subcritical in $\mu_{2i}$ and supercritical in $\mu_{2i-1}$, while each bounded branch
		$$
			\cC_i=
			\set{\intoo{\varphi\intoo{\cdot;\pm\sqrt[n-1]{-\tfrac{n+1}{2\beta}\sin\lambda},0},\lambda}\mid\mu_{2i-1}\leq\lambda\leq \mu_{2i}}
		$$
		connects $(0,\mu_{2i-1})$ with $(0,\mu_{2i})$ for all $i\in\Z$. For $\beta<0$ the branches bifurcate supercritically in $\mu_{2i}$ and subcritically in $\mu_{2i+1}$, while each bounded branch
		$$
			\cC_i=
			\set{\intoo{\varphi\intoo{\cdot;\pm\sqrt[n-1]{-\tfrac{n+1}{2\beta}\sin\lambda},0},\lambda}\mid \mu_{2i}\leq\lambda\leq\mu_{2i+1}}
		$$
		connects the points $(0,\mu_{2i})$ and $(0,\mu_{2i+1})$ for all $i\in\Z$. In both cases these  branches $\cC_i$ return to $\cT$, which exemplifies \tref{thmbif}(b). Indeed, \tref{thmsignCDE} implies that the bounded sets $\cC_i$ are compact. More detailed, in order to mimic the framework of \tref{thmsignCDE} we choose $\I=\Z$, intervals $\bar J_i=\set{\mu_i}$, $J_i=(\mu_{i-1},\mu_i)$ and $\lambda_i:=\tfrac{1}{2}(\mu_{i-1}+\mu_i)$, $i\in\Z$. For $\beta>0$ it is $I_J(\cC_i)=\set{2i-1,2i}$,
		$$
			\sum_{j\in I_J(\cC_i)}\pi_J(j)
			=
			\pi_J(2i)+\pi_j(2i-1)
			=
			\tfrac{1}{2}(\sgn E(\lambda_{2i})-\sgn E(\lambda_{2i-2}))
			=
			0,
		$$
		while for $\beta<0$ results $I_J(\cC_i)=\set{2i,2i+1}$, 
		$$
			\sum_{j\in I_J(\cC_i)}\pi_J(j)
			=
			\pi_J(2i+1)+\pi_j(2i)
			=
			\tfrac{1}{2}(\sgn E(\lambda_{2i+2})-\sgn E(\lambda_{2i}))
			=
			0,
		$$
		which confirms the relation \eqref{thmsign1a}. 
	\end{itemize}

	(4) The continuous function $\gamma(\lambda):=\tan\lambda$ on the open interval $\Lambda=(-\tfrac{\pi}{2},\tfrac{\pi}{2})$ yields a single critical value $\fC=\set{0}$. We are in the framework of \tref{thmbif}, \cref{corbifODE} and the bifurcation scenario resembles the cases (1) and (3) locally: The global branches of the transcritical bifurcation connect the hyperplanes $W_0^{1,\infty}(\R)\tm\set{-\tfrac{\pi}{2}}$ and $W_0^{1,\infty}(\R)\tm\set{\tfrac{\pi}{2}}$ (cf.\ Fig.~\ref{figbif4}(left)). The global branches of the pitchfork bifurcation are asymptotic to $W_0^{1,\infty}(\R)\tm\set{-\tfrac{\pi}{2}}$ in the subcritical case (see Fig.~\ref{figbif4}(center)) resp.\ to $W_0^{1,\infty}(\R)\tm\set{\tfrac{\pi}{2}}$ in the supercritical case (see Fig.~\ref{figbif4}(right)); this illustrates \tref{thmbif}(a) and more specifically \tref{thmglobal2CDE}$(a_2')$ resp.\ \cref{corbifODE}$(a_2')$. In any case, the bifurcating branches $\cC$ are unbounded in $x$-direction. 
	\begin{figure}[ht]
		\includegraphics[scale=0.59]{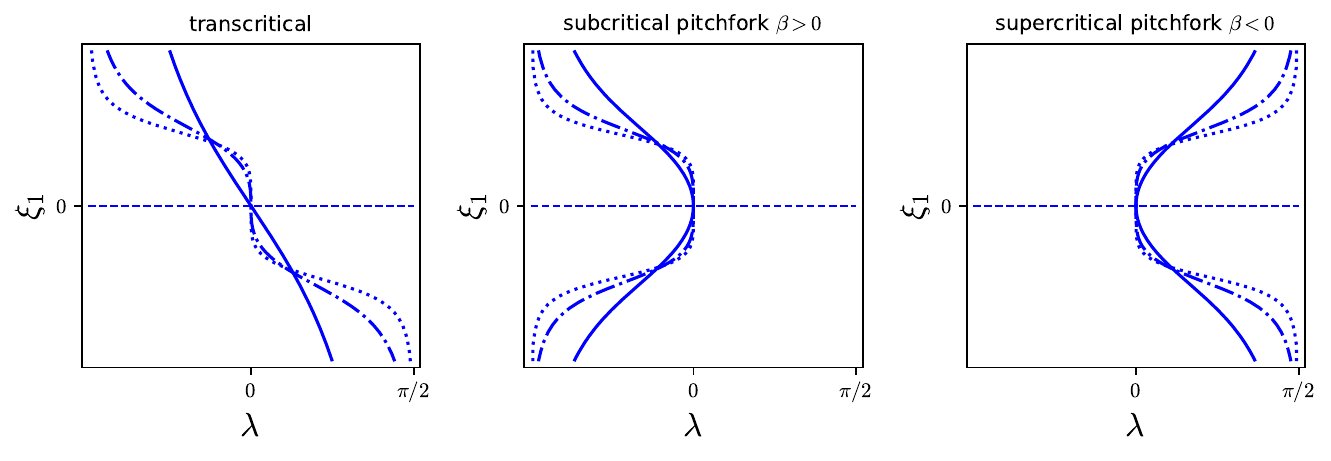}
		\caption{Bifurcating branches for $\gamma(\lambda):=\tan\lambda$ in \eref{exbif}(4): 
		\newline
		left: Transcritical bifurcation for $n=2$ (solid), $n=4$ (dashed), $n=6$ (dotted)
		\newline
		center and right: Pitchfork bifurcations for $n=3$ (solid), $n=5$ (dashed), $n=7$ (dotted)}
		\label{figbif4}
	\end{figure}
%
%	(5) The situation $\gamma(\lambda):=\sin\tfrac{1}{\lambda}$ on $\Lambda=(0,\infty)$ with critical values $\fC=\set{\tfrac{1}{\pi i}:\,i\in\N}$ is illustrated in Fig.~\ref{figbif5} and can be analyzed analogously to the above case (3) after the parameter transformation $\lambda\mapsto\tfrac{1}{\lambda}$. 
%	\begin{figure}[ht]
%		\includegraphics[scale=0.36]{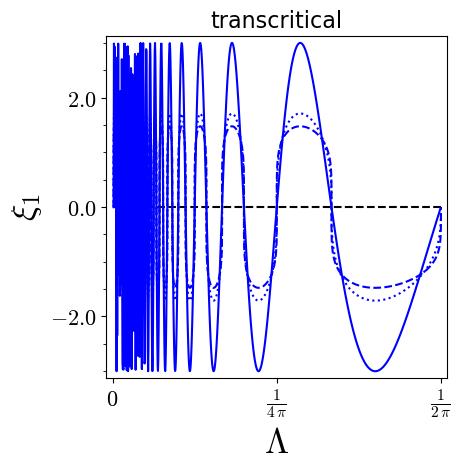}
%		\includegraphics[scale=0.36]{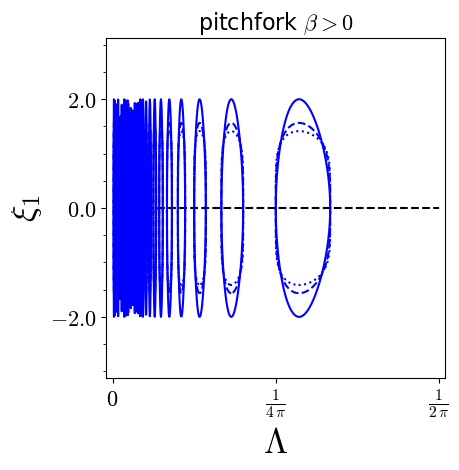}
%		\includegraphics[scale=0.36]{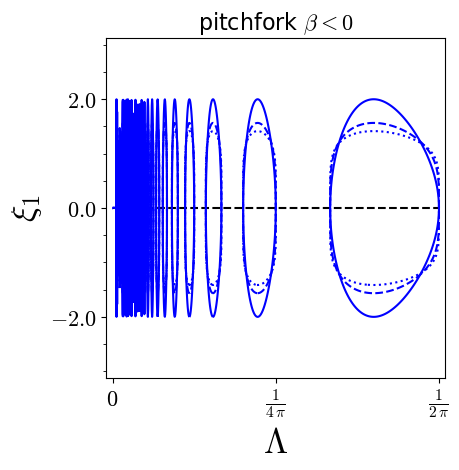}
%		\caption{Bifurcating branches for $\gamma(\lambda):=\sin\tfrac{1}{\lambda}$ in \eref{exbif}(5): 
%		\newline
%		left: Transcritical bifurcation for $n=2$ (solid), $n=4$ (dashed), $n=6$ (dotted)
%		\newline
%		center and right: Pitchfork bifurcations for $n=3$ (solid), $n=5$ (dashed), $n=7$ (dotted)}
%		\label{figbif5}
%	\end{figure}
\end{example}
\section*{Acknowledgements}
\noindent
Essential parts of this work were done during a Research-In-Groups stay in Edinburgh, Scotland, UK funded by the \emph{International Centre for Mathematical Sciences}. The generous financial support from that institution is gratefully acknowledged. 
\appendix
\renewcommand{\theequation}{\Alph{section}.\arabic{equation}}
\renewcommand{\thesection}{\Alph{section}}
\section*{Appendices}
\noindent
For the convenience of the reader we briefly sketch the central  constructions required in the above text. They are based on \cite{thesis:Longo} resp.\ \cite{fitzPeja86, FiPejsachowiczV, FitPejRab,fitzpatrick:etal:92} and \cite{lopez:mora:04,lopez:sampedro:22,lopez:sampedro:24}. 
\section{Topological dynamics of Carath\'eodory functions}
\label{appA}
Let $\Omega\subseteq\R^d$ be nonempty, open. A \emph{Lipschitz Carath\'eodory function} $f\colon\R\times\Omega\to\R^d$, in short $f\in \LC$, is defined by the properties
\begin{enumerate}
    \item[$(\mathbf{C})$] $f$ is measurable and for every compact set $K\subset\Omega$ there exists a real-valued function $m_K\in L^1_{loc}(\R)$, called $m$-\emph{bound} in the following, such that for a.a.~$t\in\R$ the boundedness condition $|f(t,x)|\le m_K(t)$ holds for all $x\in K$,
%    \comment{CP: I combined the former conditions $C_1$ and $C_2$ because they only occur in combination\newline \textcolor{red}{RS: OK}}

    \item[$(\mathbf{L})$] for every compact set $K\subset\Omega$ there exists a real-valued function $l_K\in L^1_{loc}(\R)$, called $l$-\emph{bound} in the following, such that for a.a.~$t\in\R$ the Lipschitz condition $|f(t,x)-f(t,y)|\le l_K(t)|x-y|$ holds for all $x,y\in K$. 
\end{enumerate}
Furthermore, a function $f\colon\R\times\Omega\to \R^d$ is said to be \emph{strong Carath\'eodory}, in symbols $f\in \SC$, if it satisfies $(C)$ and
\begin{enumerate}
    \item[$(\mathbf{S})$] for almost every $t\in\R$, the function $f(t,\cdot):\Omega\to\R^d$ is continuous.
\end{enumerate}
In these definitions of $\LC$ and $\SC$, functions that for a.a.~$t\in\R$ coincide for each $x\in\Omega$ are identified. A function $f\in\LC$ is said to have \emph{essentially bounded $m$}- or \emph{$l$-bounds} if the inequalities in $(C)$ and $(L)$ hold with real constants $m_K,l_K>0$ for all $K\subset\R^d$ compact. This is the setting of our work (see Remark \ref{rmk:Carat-assump}) and the results in this appendix are therefore written under this assumption although they originally refer to weaker notions known as $L^1_{loc}$-boundedness and $L^1_{loc}$-equicontinuity of the $m$- or $l$-bounds (cf.~Defs.~2.16, 2.17 and 2.24 in \cite{thesis:Longo}).  
As follows we introduce some fundamental notions in Topological Dynamics such as the hull of a function and the Bebutov flow. We keep the presentation as self contained as possible and refer the reader interested in further detail to \cite{sell:71}. 

The vector space $\LC$ together with the countable family of seminorms
\[
     n_{[p,q],x}(f)
     :=
     \abs{\int_{p}^{q}f(t,x)\d t}\quad
 \text{for $p,\,q\in\Q$, $x\in\Omega\cap\Q^d$ with $\;p<q$}, \text{ and }
 f\in\LC,\]
is a locally convex metric space with 
\begin{equation}\label{eq:dist-sigmaD}
    d(f,g):=\sum_{i,j\in\N}\,\frac{1}{2^{i+j}}\;\min\set{1,n_{[p_i,q_i],x_j}(f-g)}\,,
\end{equation}
where $((p_i,q_i))_{i\in\N}$ is a sequence in $\Q^2$ which is dense in $\R^2$ and satisfies $p_i<q_i$ for all $i\in\N$, and $(x_j)_{j\in\N}$ is a sequence in $\Omega\cap\Q^d$ which is dense in $\Omega$. We shall denote the topology induced by the above distance by $\sigma_\Q$. 

Note that if $f\in\LC$, then, for any $t\in\R$, its time-translation $S^tf:\R\times\Omega\to\R^d$, defined by $S^tf(s,x):=f(t+s,x)$, also belongs to $\LC$. We call  \emph{the hull of $f$ in $(\LC,\sigma_\Q)$}, the metric subspace of  $(\LC,\sigma_\Q)$ defined~by
\begin{equation*}
	\sH_{(\LC,\sigma_\Q)}(f):=\mathrm{cls}_{(\LC,\sigma_\Q)}\left\{S^tf\mid t\in\R\right\},
\end{equation*}
where, $\mathrm{cls}_{(\LC,\sigma_\Q)}(A)$ represents the closure in $(\LC,\sigma_\Q)$ of a set $A$ and $\sH_{(\LC,\sigma_\Q)}(f)$ is endowed with the topology induced by $\sigma_\Q$. Moreover, if $f\in\LC$ has essentially bounded $m$- or $l$-bounds, the same holds for any  $g\in\sH_{(\LC,\sigma_\Q)}(f)$ (cf.~\cite[Prop.~2.26]{thesis:Longo}).

It must be noted that, unlike in the case of bounded and uniformly continuous functions equipped with the compact-open topology, $\mathrm{cls}_{(\LC,\sigma_\Q)}(A)$ is generally not compact. However, this is true under certain assumptions on the $m$- and $l$-bounds for $f$ (which incidentally are always satisfied provided that the $m$-bounds and $l$-bounds are essentially bounded). 
The original version of this result is due to Artstein \cite[Prop.~2.4]{paper:ZA1} with respect to a topology  which is in principle stronger than $\sigma_\Q$. 
Nonetheless, it also holds true for $\sigma_\Q$ \cite[Thm.~2.39]{thesis:Longo}. 
Furthermore, the map
\[
	S:\R\times\sH_{(\LC,\sigma_\Q)}(f)\to\sH_{(\LC,\sigma_\Q)}(f),\quad (t,g)\mapsto S(t,g)=S^tg, 
\]
defines a continuous flow on $\sH_{(\LC,\sigma_\Q)}(f)$ called the \emph{Bebutov flow} \cite{paper:ZA1, thesis:Longo, paper:LNO2}. As an immediate consequence, tools from Topological Dynamics \cite{sell:71} become available. In this regard, the following sets are important. The $\omega$-\emph{limit set} of $f$ is defined as 
\begin{equation*}
	\omega(f)
	:=
	\left\{g\in\sH_{(\LC,\sigma_\Q)}(f)\,\big|\,\exists s_n\to\infty:S^{s_n}f\xrightarrow{\sigma_\Q}g\right\},
\end{equation*}
while the $\alpha$-\emph{limit set} of $f$ is defined as 
\begin{equation*}
	\alpha(f)
	:=
	\left\{g\in\sH_{(\LC,\sigma_\Q)}(f)\,\big|\,\exists s_n\to-\infty:S^{s_n}f\xrightarrow{\sigma_\Q}g\right\},
\end{equation*}
where the notation $S^{s_n}f\xrightarrow{\sigma_\Q}g$ means that $(S^{s_n}f)_{n\in\N}$ converges to $g$ with respect to $\sigma_\Q$ as $n\to\infty$. We gather all the previous information in the following lemma.

\begin{lemma} \label{lem:Top-dyn-Hsigma}
    If $f\in\LC$ has essentially bounded $m$-bounds and $l$-bounds, then the following statements are true: 
\begin{enumerate}
    \item[(a)] Any function $g\in\sH_{(\LC,\sigma_\Q)}(f)$ shares the same essentially bounded $m$-bounds and $l$-bounds with $f$.

    \item[(b)] $\sH_{(\LC,\sigma_\Q)}(f)$ is compact in $(\LC,\sigma_\Q)$. In particular, the limit sets $\alpha(g),\omega(g)$ are nonempty and compact for all $g\in\sH_{(\LC,\sigma_\Q)}(f)$.

    \item[(c)] For every real sequence $(s_n)_{n\in\N}$ with $ |s_n|\to\infty$ there exists a $g\in\sH_{(\LC,\sigma_\Q)}(f)$ and a subsequence $(s_{n_k})_{k\in\N}$ so that $S^{s_{n_k}}f\xrightarrow{\sigma_\Q}g$ as $k\to\infty$.
	\end{enumerate}
\end{lemma}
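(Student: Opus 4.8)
The plan is to dispatch the three assertions in order, deriving (c) cheaply from (b) and locating the genuine content in an imported compactness theorem. Throughout I use that $(\LC,\sigma_\Q)$ is metrizable (via the distance in \eqref{eq:dist-sigmaD}), so that compactness may be checked sequentially.

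For part (a), I would first note that the property is trivially stable under time-translation: if $|f(t,x)|\le m_K$ and $|f(t,x)-f(t,y)|\le l_K|x-y|$ for a.a.\ $t\in\R$ and all $x,y\in K$ with \emph{constant} $m_K,l_K$, then every shift $S^sf$ satisfies the same inequalities with the same constants, because an essential supremum is invariant under the measure-preserving map $t\mapsto t+s$. Hence it remains to show that these constant bounds survive passage to $\sigma_\Q$-limits. Let $g=\lim_n S^{s_n}f$ in $(\LC,\sigma_\Q)$; by the definition of $\sigma_\Q$ through the seminorms $n_{[p,q],x}$ this means $\int_p^q (S^{s_n}f)(t,x)\d t\to\int_p^q g(t,x)\d t$ for all rational $p<q$ and $x\in\Omega\cap\Q^d$. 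Combining this with $\bigl|\int_p^q (S^{s_n}f)(t,x)\d t\bigr|\le m_K(q-p)$ yields $\bigl|\int_p^q g(t,x)\d t\bigr|\le m_K(q-p)$, first for rational $p<q$ and rational $x\in K$, then for all real $p<q$ (additivity and absolute continuity of the integral) and for all $x\in K$ (a rational sequence $x_j\to x$ in $K$, dominated convergence dominated by $m_K$, using that $g(t,\cdot)$ is $l_K$-Lipschitz on $K$ for a.a.\ $t$). Lebesgue's differentiation theorem at a Lebesgue point then gives $|g(t,x)|\le m_K$; choosing one null set valid simultaneously for all $x$ in a countable dense subset of $K$ and for which $g(t,\cdot)$ is Lipschitz, and extending by continuity in $x$, delivers the essential $m$-bound for $g$. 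The $l$-bound is handled identically. This is the content of \cite[Prop.~2.26]{thesis:Longo}.

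For part (b), the substantive input is the Artstein-type compactness theorem: under essentially bounded $m$- and $l$-bounds the set $\{S^sf\mid s\in\R\}$ is relatively compact in $(\LC,\sigma_\Q)$, which is \cite[Thm.~2.39]{thesis:Longo} (the $\sigma_\Q$-version of \cite[Prop.~2.4]{paper:ZA1}); its core is an Arzel\`a--Ascoli/diagonal argument on the equi-Lipschitz primitives $t\mapsto\int_0^t (S^{s_n}f)(r,x)\d r$ over a countable dense set of spatial points $x$. Granting this, $\sH_{(\LC,\sigma_\Q)}(f)$, being by definition the $\sigma_\Q$-closure of that set, is compact. For the limit sets, fix $g\in\sH_{(\LC,\sigma_\Q)}(f)$; by part (a) the shifts $S^sg$ still share the bounds of $f$, so $\sH_{(\LC,\sigma_\Q)}(g)\subseteq\sH_{(\LC,\sigma_\Q)}(f)$, and in particular any sequence $(S^{s_n}g)$ with $s_n\to-\infty$ lies in the compact metric space $\sH_{(\LC,\sigma_\Q)}(f)$ and hence has a $\sigma_\Q$-convergent subsequence, so $\alpha(g)\neq\emptyset$; likewise $\omega(g)\neq\emptyset$. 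Finally $\alpha(g)=\bigcap_{T>0}\mathrm{cls}_{(\LC,\sigma_\Q)}\{S^sg\mid s\le -T\}$ is an intersection of closed subsets of the compact space $\sH_{(\LC,\sigma_\Q)}(f)$, hence compact, and the same holds for $\omega(g)$.

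Part (c) is then immediate: given $(s_n)_{n\in\N}$ with $|s_n|\to\infty$, the sequence $(S^{s_n}f)_{n\in\N}$ lies in the compact metrizable space $\sH_{(\LC,\sigma_\Q)}(f)$ of part (b), hence some subsequence $\sigma_\Q$-converges to a point $g$ of that space. The main obstacle is concentrated in part (b): everything rests on the relative $\sigma_\Q$-compactness of the translates of $f$, i.e.\ on \cite[Thm.~2.39]{thesis:Longo}; once that import is in place, parts (a) and (c) are routine, the only mild care needed in (a) being the passage from integral estimates back to pointwise essential bounds uniformly in the spatial variable, which is resolved by Lebesgue points together with a countable dense spatial set and the Lipschitz structure.
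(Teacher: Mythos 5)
Your proposal is correct and follows essentially the same route as the paper, which gives no proof of its own but gathers part (a) from \cite[Prop.~2.26]{thesis:Longo}, part (b) from the Artstein-type compactness result \cite[Prop.~2.4]{paper:ZA1}, \cite[Thm.~2.39]{thesis:Longo}, and derives the limit-set statements and (c) by standard topological dynamics, exactly as you do. One small repair: the inclusion $S^sg\in\sH_{(\LC,\sigma_\Q)}(f)$ (hence $\sH_{(\LC,\sigma_\Q)}(g)\subseteq\sH_{(\LC,\sigma_\Q)}(f)$) is not a consequence of $g$ sharing the bounds of $f$, but of the translation invariance of the hull under the Bebutov flow (the closure of the translation-invariant orbit of $f$ under a translation-continuous action), after which your compactness argument for $\alpha(g)$, $\omega(g)$ goes through unchanged.
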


The topology $\sigma_\Q$ is defined pointwise and thus has practical advantages in terms of taking limits. However, it can be expected that it is in general not sufficiently strong to guarantee convergence of the integrals when continuous functions are plugged in place of the space variable (which we require when dealing with solutions of Carath\'eodory equations). To this end, we  consider a stronger topology requiring uniform convergence of the integrals on certain compact subsets of continuous functions. We call  \emph{a suitable set of moduli of continuity}, any countable  set of non-decreasing continuous functions
\begin{equation*}
	\Theta=\left\{\theta^I_j \in C(\R_+, \R_+)\mid j\in\N, \ I=[q_1,q_2], \ q_1,q_2\in\Q\right\}
\end{equation*}
such that $\theta^I_j(0)=0$ for every $\theta^I_j\in\Theta$, and  with the relation of partial order given~by
\begin{equation*}\label{def:modCont}
	\theta^{I_1}_{j_1}\le\theta^{I_2}_{j_2}\quad \text{whenever } I_1\subseteq I_2 \text{ and } j_1\le j_2 \, .
\end{equation*}
We call $\sigma_{\Theta}$ the topology on $\LC$ generated by the countable family of seminorms \vspace{-.025cm}
\begin{equation*}
	p_{I,\, j}(f)=\sup_{x(\cdot)\in\sK_j^I}\left|\,\int_If\big(t,x(t)\big)\d t\,\right| ,\quad f\in\LC,
\vspace{-.025cm}
\end{equation*}
with $I=[q_1,q_2]$, $q_1,q_2\in\Q$, $j\in\N$, and $\sK_j^I$ is the compact set of continuous functions $x:I\to\bar B_j(0)$ admitting $\theta^I_j$ as a modulus of continuity. The topological space $\left(\LC,\sigma_{\Theta}\right)$ is locally convex and metrizable in analogy to \eqref{eq:dist-sigmaD}. The following result, adapted from \cite[Thm.~2.33]{thesis:Longo}, guarantees that on the hull of $f$ the topology $\sigma_\Q$ and any topology $\sigma_\Theta$ defined as above coincide, provided that $f$ has essentially bounded $l$-bounds.

\begin{lemma}\label{lem:equiv-top} 
    If $f\in\LC$ has essentially bounded $l$-bounds, then for any suitable set of moduli of continuity $\Theta$ one has
$\sH_{(\LC,\sigma_\Q)}(f)=\sH_{(\LC,\sigma_\Theta)}(f)$.     
\end{lemma}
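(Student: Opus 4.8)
The plan is to show a chain of inclusions. First note that $\sigma_\Q$ is always coarser than $\sigma_\Theta$ (the seminorms $n_{[p,q],x}$ are obtained from $p_{I,j}$ by plugging in constant functions $x(\cdot)\equiv x$, which lie in $\sK^I_j$ once $j\geq|x|$ and any admissible $\theta^I_j$ is used since constants have modulus of continuity $0$). Hence the $\sigma_\Theta$-topology restricted to $\{S^tf\mid t\in\R\}$ is a priori finer, so $\mathrm{cls}_{(\LC,\sigma_\Theta)}\{S^tf\}\subseteq\mathrm{cls}_{(\LC,\sigma_\Q)}\{S^tf\}$ as sets, i.e.\ $\sH_{(\LC,\sigma_\Theta)}(f)\subseteq\sH_{(\LC,\sigma_\Q)}(f)$. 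For the reverse inclusion, I would invoke \lref{lem:Top-dyn-Hsigma}(b): since $f$ has essentially bounded $m$- and $l$-bounds, $\sH_{(\LC,\sigma_\Q)}(f)$ is $\sigma_\Q$-compact. The key point is then that on this compact set the two topologies in fact coincide, so that the $\sigma_\Theta$-closure of the orbit already exhausts the $\sigma_\Q$-closure.

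The heart of the argument is the topological coincidence $\sigma_\Q=\sigma_\Theta$ on $\sH_{(\LC,\sigma_\Q)}(f)$. Since $\sigma_\Q\subseteq\sigma_\Theta$ always, it suffices to prove that every $\sigma_\Theta$-seminorm $p_{I,j}$ is $\sigma_\Q$-continuous on the compact hull, which by compactness reduces to: if $g_k\to g$ in $\sigma_\Q$ with all $g_k,g\in\sH_{(\LC,\sigma_\Q)}(f)$, then $p_{I,j}(g_k-g)\to 0$. Here one exploits that, by \lref{lem:Top-dyn-Hsigma}(a), all functions in the hull share a common essentially bounded $l$-bound $l_R$ on each ball $\bar B_R(0)$, so the family $\{g(\,\cdot\,,\cdot)\mid g\in\sH\}$ is uniformly Lipschitz in the space variable. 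Given $\eps>0$, cover the compact set $\sK^I_j\subset C(I,\bar B_j(0))$ (compact by Ascoli--Arzelà, all its members sharing the modulus $\theta^I_j$) by finitely many balls of radius $\eps$ centered at $x_1,\dots,x_m\in\sK^I_j$, then approximate each $x_i$ uniformly on $I$ by a $\Q^d$-valued step function $s_i$ with finitely many values; the uniform Lipschitz bound controls $\bigl|\int_I(h(t,x(t))-h(t,s(t)))\,dt\bigr|$ for $h\in\{g_k,g\}$ in terms of $l_R$, $\eps$ and $\lambda_1(I)$, and $\sigma_\Q$-convergence $g_k\to g$ handles the finitely many resulting integrals $\int_Ig_k(t,c)\,dt$ over constant pieces $c\in\Q^d$. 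Splicing these estimates together with the usual $3\eps$ bookkeeping yields $\sup_{x\in\sK^I_j}\bigl|\int_I(g_k-g)(t,x(t))\,dt\bigr|<\eps$ for $k$ large.

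The main obstacle is exactly this uniform-approximation step: passing from control of the integrals against constant $\Q^d$-valued arguments (all that $\sigma_\Q$ provides) to control against the whole compact family $\sK^I_j$ of continuous arguments. Everything hinges on the \emph{uniform} Lipschitz estimate being available simultaneously for every member of the hull, which is why the essentially-bounded-$l$-bound hypothesis is indispensable and why \lref{lem:Top-dyn-Hsigma}(a) is the crucial input; without a hull-wide Lipschitz constant the step-function approximation error cannot be made uniform in $g_k$. Once the topological coincidence on $\sH_{(\LC,\sigma_\Q)}(f)$ is in hand, the equality of hulls is immediate: the orbit $\{S^tf\mid t\in\R\}$ sits inside the $\sigma_\Q$-compact set $\sH_{(\LC,\sigma_\Q)}(f)$, on which $\sigma_\Q$ and $\sigma_\Theta$ induce the same closed sets, so the two closures of the orbit agree. (One may alternatively cite \cite[Thm.~2.33]{thesis:Longo} directly, of which this is the specialization to essentially bounded $l$-bounds, but the sketch above records the mechanism.)
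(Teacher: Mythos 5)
Your argument is sound and, in substance, reconstructs the proof that the paper does not spell out: in the text, \lref{lem:equiv-top} is simply quoted as an adaptation of \cite[Thm.~2.33]{thesis:Longo}, so there is no in-paper proof to compare against beyond that citation. Your mechanism --- constants lie in every $\sK^I_j$, hence $\sigma_\Q\subseteq\sigma_\Theta$ and $\sH_{(\LC,\sigma_\Theta)}(f)\subseteq\sH_{(\LC,\sigma_\Q)}(f)$; then the reverse inclusion via coincidence of the two induced topologies on the $\sigma_\Q$-hull, obtained by the $3\eps$ passage from rational constant arguments to the compact family $\sK^I_j$ through a hull-wide Lipschitz constant and rational-valued step functions on rational partitions --- is exactly the idea behind the cited theorem, and the closing bookkeeping (once the induced topologies agree on $H:=\sH_{(\LC,\sigma_\Q)}(f)$, the relative closures of the orbit $\set{S^tf\mid t\in\R}$ in $H$ coincide, forcing $\sH_{(\LC,\sigma_\Theta)}(f)=\sH_{(\LC,\sigma_\Q)}(f)$) is correct.

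Two touch-ups are needed, neither of which breaks the argument. First, you invoke \lref{lem:Top-dyn-Hsigma}(b) for $\sigma_\Q$-compactness of the hull, but that lemma presupposes essentially bounded $m$-bounds as well, which \lref{lem:equiv-top} does not assume; fortunately compactness is never actually used: the reduction of $\sigma_\Q$-continuity of each $p_{I,j}$ to sequential continuity follows from metrizability (both topologies come from countable seminorm families), not from compactness, and the final closure comparison uses only the equality of the induced topologies. Second, the hull-wide essentially bounded $l$-bound should be drawn from \cite[Prop.~2.26]{thesis:Longo}, recalled in App.~\ref{appA}, which propagates the $l$-bound alone to every element of the hull, rather than from \lref{lem:Top-dyn-Hsigma}(a), which again formally assumes both bounds. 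With these citation-level adjustments your sketch proves the lemma under exactly its stated hypothesis.
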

\section{Compactness and properness}
\label{appB}
Let $X,Y$ and $\Lambda$ be metric spaces. With the projections
\begin{align}
	\pi_1:X\tm\Lambda&\to X,&
	\pi_1(x,\lambda)&:=x,&
	\pi_2:X\tm\Lambda&\to\Lambda,&
	\pi_2(x,\lambda):=\lambda
	\label{noproj}
\end{align}
the following is immediate:
\begin{lemma}\label{lemmaA}
    For closed sets $\cC\subseteq X\tm\Lambda$ the following are equivalent:
    \begin{enumerate}[(a)]
        \item $\cC$ is compact, 

        \item there exist compact $K_1\subseteq X$ and $K_2\subseteq\Lambda$ such that $\cC\subseteq K_1\tm K_2$, 

        \item $\pi_1\cC$ and $\pi_2\cC$ are compact.
    \end{enumerate}
\end{lemma}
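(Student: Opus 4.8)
The plan is to verify the cyclic chain of implications (a) $\Rightarrow$ (c) $\Rightarrow$ (b) $\Rightarrow$ (a), each step of which rests on a standard property of compact sets in metric spaces.

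For (a) $\Rightarrow$ (c) I would recall that the projections $\pi_1,\pi_2$ from \eqref{noproj} are continuous and that a continuous image of a compact set is compact; hence $\pi_1\cC$ and $\pi_2\cC$ are compact as soon as $\cC$ is. For (c) $\Rightarrow$ (b) I would set $K_1:=\pi_1\cC$ and $K_2:=\pi_2\cC$, which are compact by assumption, and observe that every $(x,\lambda)\in\cC$ satisfies $x=\pi_1(x,\lambda)\in K_1$ and $\lambda=\pi_2(x,\lambda)\in K_2$, so that $\cC\subseteq K_1\tm K_2$.

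For (b) $\Rightarrow$ (a) I would argue that the product $K_1\tm K_2$ of the two compact metric spaces is itself compact, that $\cC$ --- being closed in $X\tm\Lambda$ --- is also closed in the subspace $K_1\tm K_2\subseteq X\tm\Lambda$, and that a closed subset of a compact space is compact. This closes the cycle. I do not anticipate any real obstacle here: the assertion is elementary and its proof is a sequence of routine applications of the quoted facts. The only point worth flagging is that the hypothesis that $\cC$ be closed is used exactly once, namely in (b) $\Rightarrow$ (a), and is genuinely needed there.
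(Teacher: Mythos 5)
Your proof is correct, and all three implications are argued exactly as one would expect: continuity of the projections for (a)$\Rightarrow$(c), the trivial inclusion for (c)$\Rightarrow$(b), and compactness of $K_1\tm K_2$ plus closedness of $\cC$ for (b)$\Rightarrow$(a); your remark that closedness is needed only in the last step is also accurate. The paper itself gives no proof --- it states the lemma as ``immediate'' --- so your write-up simply supplies the routine argument the authors had in mind, with no divergence in approach.
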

%\begin{proof}
%    $(a)\Rightarrow(c)$ Because the projections $\pi_1,\pi_2$ are continuous, $\pi_1\cC\subseteq X$ and $\pi_2\cC\subseteq\Lambda$ are compact as images of the compact set $\cC$. 
%    %Moreover, due to $\cC_\lambda:=\set{x\in X:\,(x,\lambda)\in\cC}=\cC\cap(X\tm\set{\lambda})$ for each $\lambda\in\Lambda$ the fibers are closed as intersection of closed sets. 
%
%    $(c)\Rightarrow(b)$ The sets $K_1:=\pi_1\cC$ and $K_2:=\pi_2\cC$ are compact and clearly the inclusion $\cC\subseteq\pi_1\cC\tm\pi_2\cC$ holds. 
%
%    $(b)\Rightarrow(a)$ Obviously, $\cC\subseteq K_1\tm K_2$ is compact as closed subset of the compact product $K_1\tm K_2$.
%\end{proof}

A mapping $G:X\to Y$ is called \emph{proper}, if for each compact subset $K\subset Y$ also the preimage $G^{-1}(K)\subseteq X$ is compact. 
\begin{lemma}
    \label{lemcharprop}
    For continuous mappings $G:X\to Y$ the following are equivalent:
    \begin{enumerate}
        \item[(a)] $G$ is \emph{proper on bounded, closed subsets} of $X$, i.e.\ for every bounded, closed subset $B\subset X$ the restriction $G|_B$ is proper, 

        \item[(b)] for every bounded sequence $(x_n)_{n\in\N}$ in $X$ such that $(G(x_n))_{n\in\N}$ converges in $Y$, the sequence $(x_n)_{n\in\N}$ has a convergent subsequence. 
    \end{enumerate}
\end{lemma}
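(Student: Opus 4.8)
The plan is to prove the two implications separately, working throughout with sequences, since $X$ and $Y$ are metric spaces and there compactness coincides with sequential compactness.

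For $(a)\Rightarrow(b)$, I would start from a bounded sequence $(x_n)_{n\in\N}$ in $X$ with $G(x_n)\to y$ in $Y$, and package the relevant data so that properness becomes directly applicable. Concretely, set $B:=\overline{\{x_n\mid n\in\N\}}$, which is bounded (the closure of a bounded subset of a metric space is bounded) and closed, and set $K:=\{G(x_n)\mid n\in\N\}\cup\{y\}$, which is compact as a convergent sequence together with its limit. Since $G|_B$ is proper by assumption, the preimage $(G|_B)^{-1}(K)$ is a compact subset of $B$, and it contains every $x_n$; hence $(x_n)_{n\in\N}$ admits a convergent subsequence.

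For $(b)\Rightarrow(a)$, I would fix a bounded, closed set $B\subseteq X$ and a compact set $K\subseteq Y$ and verify that $(G|_B)^{-1}(K)=\{x\in B\mid G(x)\in K\}$ is sequentially compact. Given a sequence $(x_n)_{n\in\N}$ in this set, it is bounded because $B$ is, while $(G(x_n))_{n\in\N}$ lies in the compact set $K$, so passing to a subsequence we may assume $G(x_n)\to y\in K$. Applying hypothesis $(b)$ to this subsequence produces a further subsequence $(x_{n_k})_{k\in\N}$ converging to some $x\in X$; closedness of $B$ gives $x\in B$ and continuity of $G$ gives $G(x)=y\in K$, so $x\in(G|_B)^{-1}(K)$. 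As a sequentially compact subset of a metric space is compact, this completes the argument.

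I do not expect a genuine obstacle here; the points requiring a little care are the elementary facts that a bounded set has bounded closure and that a convergent sequence together with its limit forms a compact set, together with the systematic identification of compactness with sequential compactness in metric spaces. One should also note in $(a)\Rightarrow(b)$ that $K$ is compact regardless of whether $(G(x_n))_{n\in\N}$ has repeated or eventually constant terms, which is immediate.
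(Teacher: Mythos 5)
Your proposal is correct and follows essentially the same route as the paper's proof: for $(a)\Rightarrow(b)$ enclose the sequence in a bounded closed set (the paper uses a closed ball, you use the closure of the sequence) and apply properness to the compact set formed by the convergent image sequence and its limit; for $(b)\Rightarrow(a)$ argue by sequential compactness of $G^{-1}(K)\cap B$, extracting first a convergent image subsequence and then, via $(b)$, a convergent subsequence in $X$. Your write-up is in fact slightly more explicit than the paper's in checking that the limit point lies in the preimage (via closedness of $B$ and continuity of $G$), which the paper passes over tersely.
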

\begin{proof} %Morris p11, Prop 1.4.8
    $(a)\Rightarrow(b)$ Let $(x_n)_{n\in\N}$ be a bounded sequence in $X$ and $B\subset X$ be a closed ball containing all $x_n$. Because the closure $\overline{\set{G(x_n)\mid n\in\N}}$ is compact in $Y$, its preimage $W$ in $B$ is compact. Due to $x_n\in W$ for all $n\in\N$ this implies (b).

    $(b)\Rightarrow(a)$ Let $B\subset X$ be bounded and closed, while $K\subset Y$ is supposed to be compact. We establish that $C:=G^{-1}(K)\cap B\subset X$ is compact. Thereto, if $(x_n)_{n\in\N}$ is a sequence in $X$, then there exists a convergent subsequence $(G(x_{k_n}))_{n\in\N}$ of $(G(x_n))_{n\in\N}$. Thanks to assumption (b) we obtain a further subsequence $(x_{k_{l_n}})_{n\in\N}$ being convergent in the subset $C\subset X$, which means that $\cC$ is compact by sequential compactness. 
\end{proof}

Properness of parameter dependent continuous mappings can be verified using the following elementary criterion: 
\begin{lemma}
    \label{lemproper}
    If a continuous mapping $G:X\tm\Lambda\to Y$ satisfies
    \begin{itemize}
        \item[(i)] $G(\cdot,\lambda):X\to Y$, $\lambda\in\Lambda$, is proper on every bounded, closed subset of $X$, 

        \item[(ii)] $\set{G(x,\cdot):\Lambda\to Y\mid x\in B}$ is equicontinuous for all bounded $B\subset X$, 
    \end{itemize}
    then $G$ is proper on every product $B\tm\Lambda_0$ of bounded, closed sets $B\subset X$ with compact sets $\Lambda_0\subseteq\Lambda$.
\end{lemma}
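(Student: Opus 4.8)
The plan is to establish properness of $G$ on a fixed product $B\tm\Lambda_0$ (with $B\subset X$ bounded, closed and $\Lambda_0\subseteq\Lambda$ compact) by checking sequential compactness of preimages directly. Fix a compact $K\subseteq Y$; since $B\tm\Lambda_0$ is closed and $G$ continuous, the set $C:=G^{-1}(K)\cap(B\tm\Lambda_0)$ is closed, so it suffices to show every sequence in $C$ has a subsequence converging to a point of $C$. So let $(x_n,\lambda_n)_{n\in\N}$ be a sequence in $C$; by compactness of $K$ we may pass to a subsequence so that $G(x_n,\lambda_n)\to y$ for some $y\in K$, and by compactness of $\Lambda_0$ pass to a further subsequence so that $\lambda_n\to\lambda_\ast$ for some $\lambda_\ast\in\Lambda_0$. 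I relabel this subsequence as $(x_n,\lambda_n)_{n\in\N}$.

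The key step is to replace the moving parameter $\lambda_n$ by the fixed limit $\lambda_\ast$. Since $B$ is bounded, assumption~(ii) asserts that the family $\set{G(x,\cdot):\Lambda\to Y\mid x\in B}$ is equicontinuous, in particular equicontinuous at the single point $\lambda_\ast$: for every $\eps>0$ there is a $\delta>0$ such that $d_\Lambda(\lambda,\lambda_\ast)<\delta$ implies $d_Y(G(x,\lambda),G(x,\lambda_\ast))<\eps$ for \emph{all} $x\in B$. Plugging in $\lambda=\lambda_n$ for $n$ large (so that $d_\Lambda(\lambda_n,\lambda_\ast)<\delta$) and using that $x_n\in B$, we get $d_Y(G(x_n,\lambda_n),G(x_n,\lambda_\ast))\to 0$. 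Combined with $G(x_n,\lambda_n)\to y$ this yields $G(x_n,\lambda_\ast)\to y$ as well.

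Now assumption~(i) enters: $G(\cdot,\lambda_\ast)$ is proper on the bounded, closed set $B$, so by the equivalence in \lref{lemcharprop} the bounded sequence $(x_n)_{n\in\N}$, along which $(G(x_n,\lambda_\ast))_{n\in\N}$ converges, admits a subsequence $x_{n_k}\to x_\ast$, and $x_\ast\in B$ since $B$ is closed. Then $(x_{n_k},\lambda_{n_k})\to(x_\ast,\lambda_\ast)\in B\tm\Lambda_0$, and continuity of $G$ gives $G(x_\ast,\lambda_\ast)=\lim_k G(x_{n_k},\lambda_{n_k})=y\in K$, hence $(x_\ast,\lambda_\ast)\in C$. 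This exhibits the required convergent subsequence, so $C$ is sequentially compact, and being a metric space it is compact. (Alternatively one can phrase the conclusion through \lref{lemmaA}, noting $C\subseteq B\tm\Lambda_0$ with $\pi_2 C\subseteq\Lambda_0$ precompact.)

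The argument is short, and the only point requiring care is the interchange in the middle paragraph: one must invoke equicontinuity \emph{at the limit point} $\lambda_\ast$ (which is precisely what (ii) delivers, uniformly in $x\in B$), rather than any uniform continuity in $\lambda$ over $\Lambda_0$; the rest is routine subsequence bookkeeping together with the already-established characterizations of properness. So I expect no serious obstacle — this lemma is essentially a packaging of the compactness of $\Lambda_0$, the equicontinuity hypothesis, and \lref{lemcharprop}.
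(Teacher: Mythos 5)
Your proposal is correct and follows essentially the same route as the paper's proof: extract subsequences using compactness of $K$ and $\Lambda_0$, use the equicontinuity hypothesis (ii) at the limit parameter to pass from $G(x_n,\lambda_n)$ to $G(x_n,\lambda_\ast)$, and then apply the slice-properness (i) (via \lref{lemcharprop}) to obtain a convergent subsequence of $(x_n)$. The only difference is cosmetic bookkeeping (you verify explicitly that the limit lies in $C$), so no issues remain.
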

\begin{proof}
    Let $B\subset X$ be bounded, closed and $\Lambda_0\subseteq\Lambda$ be compact. Then $G$ is proper on $B\tm\Lambda_0$, if and only if for each compact $K\subseteq Y$ the intersection $G^{-1}(K)\cap(B\tm\Lambda_0)$ is compact. This, in turn, means that every sequence in $G^{-1}(K)\cap(B\tm\Lambda_0)$ has a convergent subsequence. Let $((x_n,\lambda_n))_{n\in\N}$ be such a sequence and because $K\subseteq Y$ is compact, there exists a subsequence $((x_{k_n^1},\lambda_{k_n^1}))_{n\in\N}$ such that the following limit
    \begin{equation}
        y:=\lim_{n\to\infty}G\bigl(x_{k_n^1},\lambda_{k_n^1}\bigr)
        \label{appstar}
    \end{equation}
    exists. Moreover, since $\Lambda_0$ is compact, there exists a convergent subsequence $(\lambda_{k_n^2})_{n\in\N}$ of $(\lambda_{k_n^1})_{n\in\N}$ having the limit $\lambda_0\in\Lambda_0$.

    Due to assumption (i), $G(\cdot,\lambda_0)$ is proper on bounded, closed subsets of $X$. This means, if we show $y=\lim_{k\to\infty}G(x_{k_n^2},\lambda_0)$, then there exists a $x_0\in B$ and another subsequence $(x_{k_n^3})_{n\in\N}$ with $\lim_{n\to\infty}x_{k_n^3}=x_0$. Hence, $\lim_{n\to\infty}G(x_{k_n^3},\lambda_{k_n^3})=(x_0,\lambda_0)$ concludes the proof. 

    Now, because assumption (ii) implies the limit relation
    $$
        \lim_{n\to\infty}d\bigl(G(x_{k_n^2},\lambda_0),G(x_{k_n^2},\lambda_{k_n^2})\bigr)=0
    $$
    we obtain from the triangle inequality that
    \begin{align*}
        0
        &\leq
        d(G(x_{k_n^3},\lambda_0),y)
        \leq
        d(G(x_{k_n^3},\lambda_{k_n^3}),y)+d(G(x_{k_n^3},\lambda_{k_n^3}),y)
        \xrightarrow[n\to\infty]{\eqref{appstar}}
        0
    \end{align*}
    holds, as desired. 
\end{proof}
\section{Parity and global bifurcations}
\label{appC}
Let $X,Y$ denote real Banach spaces and $U\subseteq X\tm\Lambda$ be nonempty, open and simply connected with an open interval $\Lambda\subseteq\R$. We essentially follow the modern approach of \cite{lopez:mora:04,lopez:sampedro:22,lopez:sampedro:24} and consider continuous mappings $G:U\to Y$ having the properties: 
\begin{itemize}
	\item[$(\mathbf{M_1})$] For every $\lambda\in\Lambda$ one has $(0,\lambda)\in U$ (cf.\ Fig.~\ref{fig1}) and
	$$
		G(0,\lambda)\equiv 0\on\Lambda, 
	$$

	\item[$(\mathbf{M_2})$] the partial derivative $D_1G:U\to L(X,Y)$ exists as a continuous function, 

	\item[$(\mathbf{M_3})$] $D_1G(x,\lambda)\in F_0(X,Y)$ for all $(x,\lambda)\in U$.
\end{itemize}
\begin{SCfigure}[2]
	\includegraphics[scale=0.5]{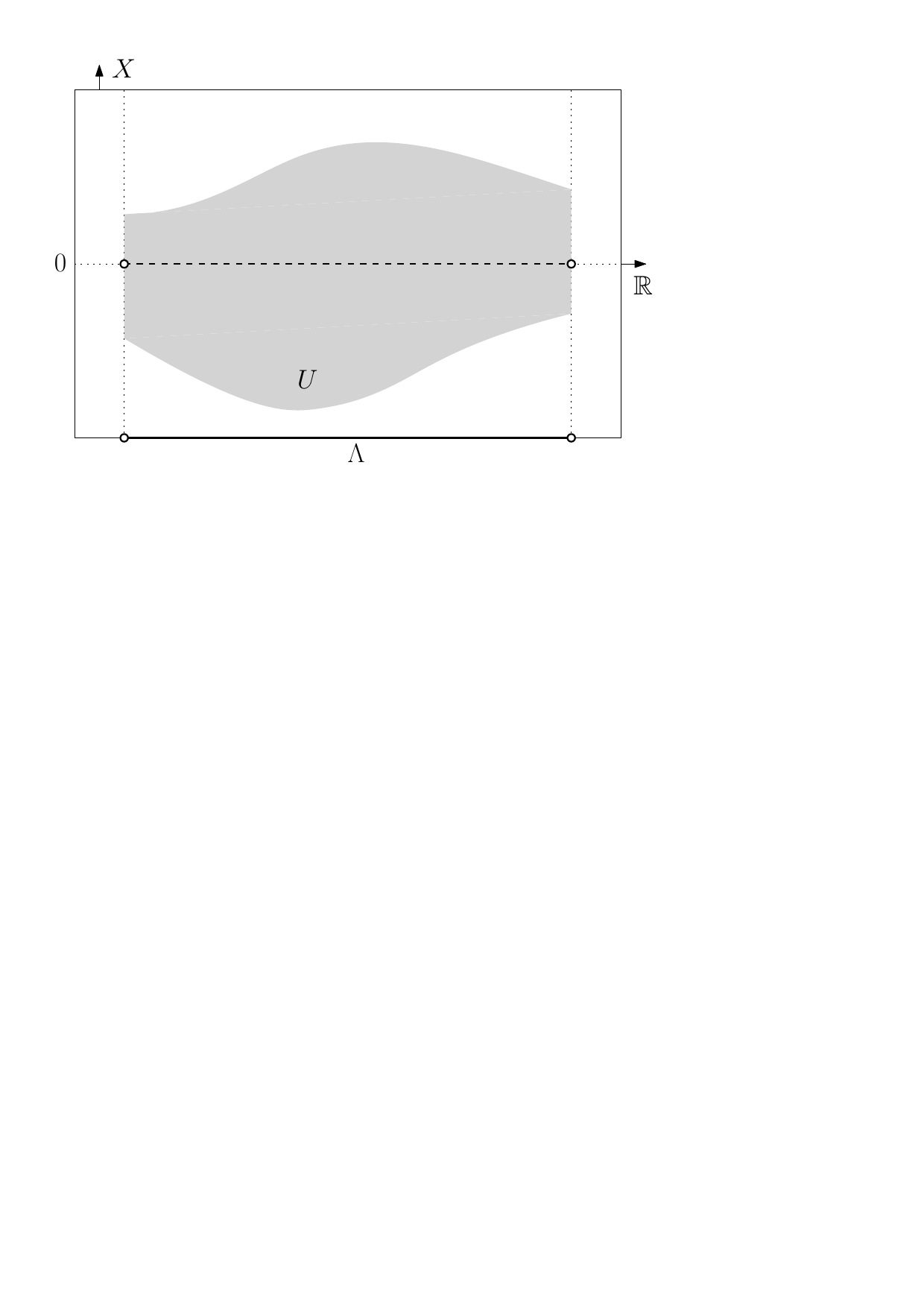}
	\caption{Role of the open interval $\Lambda\subseteq\R$ and shape of the set $U\subseteq X\tm\Lambda$ in Hypothesis $(M_1)$ with the trivial solution as dashed line}
	\label{fig1}
\end{SCfigure}

With the continuous path of index $0$ Fredholm operators
\begin{align*}
	T:\Lambda&\to F_0(X,Y),&
	T(\lambda)&:=D_1G(0,\lambda)
\end{align*}
we denote a parameter $\lambda\in\Lambda$ as \emph{critical}, if $T(\lambda)\not\in GL(X,Y)$ holds and introduce the \emph{set of critical values}
\begin{equation}
	\fC:=\set{\lambda\in\Lambda:\,T(\lambda)\not\in GL(X,Y)}.
	\label{nocv}
\end{equation}
Note that $\fC\subseteq\R$ is closed in $\Lambda$, but not necessarily discrete. Since each $T(\lambda)$ is Fredholm of index $0$, the set $\fC$ allows the characterization $\fC=\set{\lambda\in\Lambda:\,N(T(\lambda))\neq\set{0}}$. 

For any subinterval $[a,b]\subset\Lambda$ (of positive length) it is due to \cite{fitzPeja86} there exists a  \emph{parametrix} $P:[a,b]\to GL(Y,X)$ such that $P(\lambda)T(\lambda)-I_X\in L(X)$ is a compact operator for every $\lambda\in[a,b]$. In case $T: [a,b]\to F_0(X,Y)$ has invertible endpoints, then its \emph{parity} \cite{fitzPeja86,FiPejsachowiczV,FitPejRab,fitzpatrick:etal:92} is defined as
$$
	\sigma(T,[a,b])=\deg_{LS}(P(a)T(a))\cdot \deg_{LS}(P(b)T(b))\in\set{-1,1},
$$
where $\deg_{LS}$ stands for the Leray--Schauder degree (cf.\ e.g.\ \cite[pp.~199ff]{kielhoefer:12}). 

The parity is a tool to study the solution set $G^{-1}(0)\subseteq U$ of abstract parametrized equations
\begin{equation}
	\tag{$O_\lambda$}
	G(x,\lambda)=0.
\end{equation}
Since $G^{-1}(0)$ contains the trivial solutions in $\set{0}\tm\Lambda$ due to $(M_1)$, we introduce
$$
	\cS
	:=
	\set{(x,\lambda)\in U:\,G(x,\lambda)=0,\,x\neq 0}
	\cup
	(\set{0}\tm\fC)
$$
as \emph{set of nontrivial (or critical trivial) solutions} to \eqref{abs} (see Fig.~\ref{fig2}); it is closed in $U$. 

A pair $(0,\lambda^\ast)\in X\tm\Lambda$ is called \emph{bifurcation point}, if every neighborhood of $(0,\lambda^\ast)$ contains an element of $\cS$. In this case one denotes $\lambda^\ast$ as \emph{bifurcation value}; we abbreviate
$$
	\fB:=\set{\lambda\in\Lambda:\,(0,\lambda)\text{ is a bifurcation point for }\eqref{abs}}
$$
and obtain the inclusion $\fB\subseteq\fC$ from the Implicit Function Theorem \cite[p.~7, Thm.~I.1.1]{kielhoefer:12}. 
\begin{SCfigure}[2]
	\includegraphics[scale=0.5]{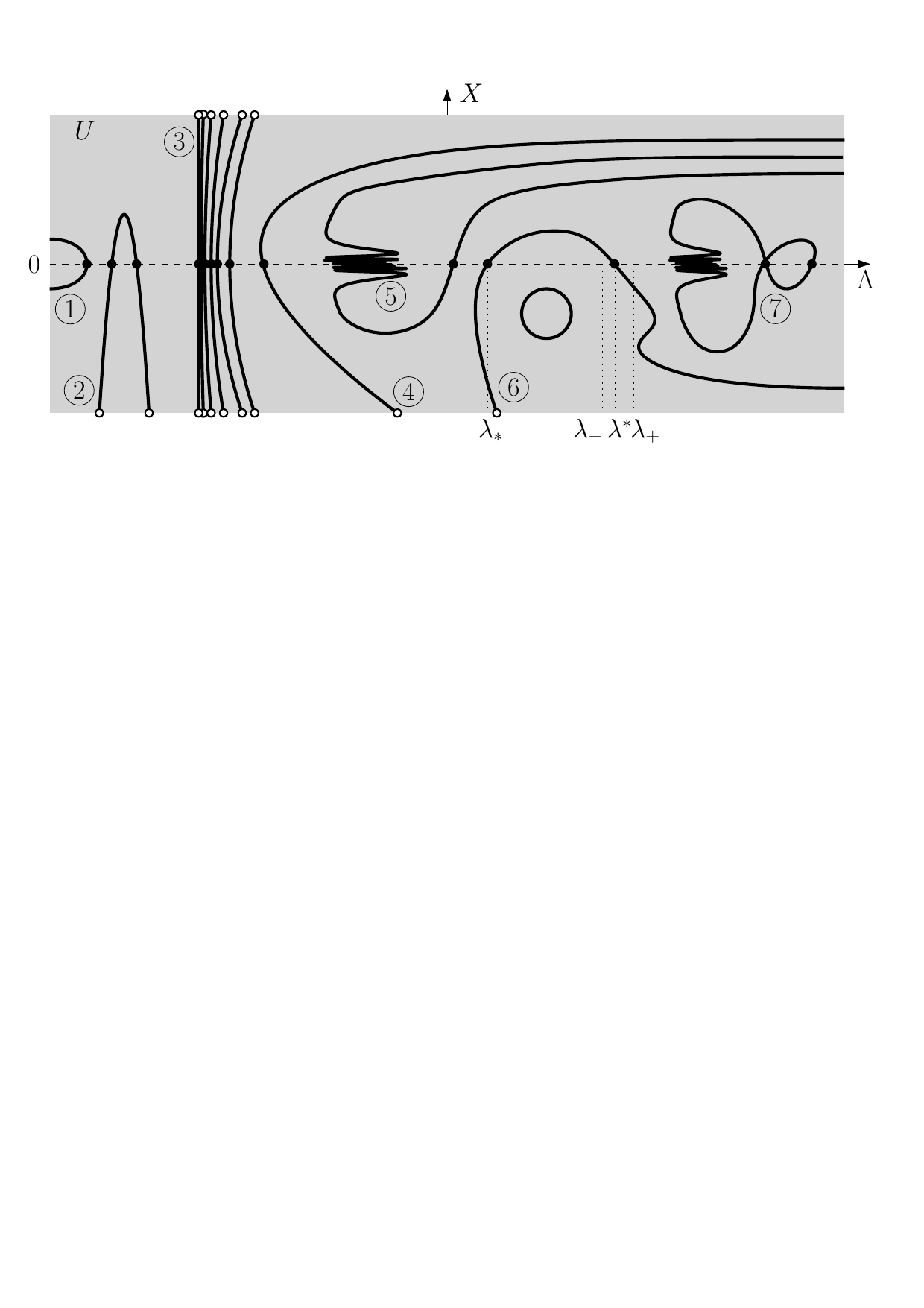}
	\caption{The set $\cS$ of nontrivial solutions and the alternatives from \tref{thmglobal} for $\Lambda=\R$ and $U$ bounded in $x$-direction (grey-shaded): 
	\newline
	1: $(a_2)$\newline
	2: $(a_1)$ \& (b)\newline
	3: $(a_1)$ \& $(a_2)$\newline
	4: $(a_1)$ \& $(a_2)$\newline
	5: $(a_2)$ \& (b) \newline
	6: $(a_1)$ \& $(a_2)$\& (b)\newline
	7: (b)
	}
	\label{fig2}
\end{SCfigure}
We denote a nonempty connected subset $\cC\subseteq\cS$ as \emph{component} of $\cS$, if it is maximal having these properties. With $\cS$ also every component $\cC$ of $\cS$ is closed in $U$. 
\begin{theorem}[abstract global bifurcation]
	\label{thmglobal}
	Let $(M_1$--$M_3)$ hold and $\lambda_-,\lambda_+\in\Lambda$ with $\lambda_-<\lambda_+$. 
	If $\sigma(T,[\lambda_-,\lambda_+])=-1$, then there exists a bifurcation value $\lambda^\ast\in(\lambda_-,\lambda_+)$. In particular, there exists a 
 component $\cC$ of $\cS$ intersecting $\set{0}\tm(\lambda_-,\lambda_+)$ in $(0,\lambda^\ast)$. Moreover, at least one of the following alternatives holds (cf.~Fig.~\ref{fig2}):
	\begin{enumerate}
		\item[(a)] The component $\cC$ is noncompact in $U$, i.e.\ its closure $\overline{\cC}$ in $X\tm\R$ does 
 %  \comment{\textcolor{red}{RS: I corrected a similar error as on page 24.}
  % \newline\textcolor{blue}{RS: Does Set 3 from in the above figure contain finitely many components or infinitely many? If infinitely, then $(a_2)$ in 3: is OK; otherwise, % if, for instance, $X$ is finite-dimensional space, then Set 3 is relatively compact and $(a_2)$ probably must be deleted.\newline RS: I think Set 3 in the above figure  % % satisfies $(a_1)$. Am I right?} }
		\begin{itemize}
			\item[$(a_1)$] contain a point $(x,\lambda)\in\partial U$, or 

			\item[$(a_2)$] is not a compact subset of $X\tm\R$, which in turn means that for any compact $X_0\subseteq X$ and compact $\Lambda_0\subseteq\R$ there exists a $(x,\lambda)\in\overline{\cC}\setminus(X_0\tm\Lambda_0)$, 
		\end{itemize}

		\item[(b)] $\cC$ contains a point $(0,\lambda_\ast)$ with $\lambda_\ast\in\fB\setminus[\lambda_-,\lambda_+]$. 
	\end{enumerate}
	In particular, for $U=X\tm\R$ and $\Lambda=\R$ the alternative $(a)$ reduces to $(a_2)$. 
\end{theorem}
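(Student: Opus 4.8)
The plan is to obtain \tref{thmglobal} as a specialization of the abstract global bifurcation theorem for index-$0$ Fredholm operators built on the parity, as developed in \cite{fitzPeja86,FiPejsachowiczV,FitPejRab,fitzpatrick:etal:92,pejsachowicz:rabier:98} and reorganized in \cite{lopez:sampedro:22,lopez:sampedro:24}. The first observation is that hypotheses $(M_1$--$M_3)$ are verbatim the structural assumptions of that theory: $G$ is continuous on the nonempty, open, simply connected set $U\subseteq X\tm\Lambda$, the partial derivative $D_1G:U\to L(X,Y)$ exists and is continuous, $D_1G(x,\lambda)\in F_0(X,Y)$ throughout, and $G(0,\cdot)\equiv0$ supplies the trivial branch. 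Hence $T:\Lambda\to F_0(X,Y)$, $T(\lambda):=D_1G(0,\lambda)$, is a continuous path; the mere fact that $\sigma(T,[\lambda_-,\lambda_+])$ is well-defined forces $T(\lambda_\pm)\in GL(X,Y)$, and the hypothesis $\sigma(T,[\lambda_-,\lambda_+])=-1$ is precisely the nondegenerate crossing condition that drives the conclusion.

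I would then split the argument into a local and a global part. Locally, multiplicativity of the parity over subintervals shows that $\sigma(T,[\lambda_-,\lambda_+])=-1$ cannot hold unless the parity changes across some $\lambda^\ast\in(\lambda_-,\lambda_+)\cap\fC$; the parity-based local bifurcation theorem (\cite{fitzPeja86}; see also \cite{lopez:sampedro:22}) then identifies $(0,\lambda^\ast)$ as a bifurcation point, so $\lambda^\ast\in\fB$, with nontrivial solutions of \eqref{abs} accumulating at it. Globally, the cited theorems yield a component $\cC$ of $\cS$ through $(0,\lambda^\ast)$ which, by the Rabinowitz-type connectedness mechanism (Whyburn's lemma applied within $\cS$), cannot be \emph{confined}: either $\cC$ is not a compact subset of $U$, or $\cC$ meets the trivial branch again at some $(0,\lambda_\ast)$ with $\lambda_\ast\in\fB\setminus[\lambda_-,\lambda_+]$. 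The first option is unwound purely topologically: since $\cC$ is closed in $U$ (being a component of the relatively closed set $\cS$) and $U$ is open in $X\tm\R$, failure of $\cC$ to be compact means that its closure $\overline{\cC}$ taken in $X\tm\R$ either meets $\partial U$, which is $(a_1)$, or is not a compact subset of $X\tm\R$ --- equivalently, escapes every product $X_0\tm\Lambda_0$ of compacta --- which is $(a_2)$. Thus the abstract alternative translates exactly into (a), with its subcases $(a_1)$, $(a_2)$, or into (b). The degenerate case $U=X\tm\R$, $\Lambda=\R$ then requires no extra work: here $\partial U=\emptyset$, so $(a_1)$ is vacuous and (a) collapses to $(a_2)$.

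The principal obstacle I anticipate is not analytic but one of matching the present hypotheses and conclusions to the appropriate predecessor. Several of the classical sources (\cite{fitzpatrick:etal:92,pejsachowicz:rabier:98,galdi:rabier:99}) assume $G$ globally Fr{\'e}chet differentiable and/or a bounded domain, and phrase the conclusion in terms of \emph{admissible} sets rather than the explicit trichotomy $(a_1)/(a_2)/(b)$; one therefore has to invoke the version formulated under exactly $(M_1$--$M_3)$ --- continuity of $G$ together with continuity of $D_1G$ on an arbitrary open simply connected $U$ --- as in the modern treatment \cite{lopez:sampedro:22,lopez:sampedro:24}, and to check that its parity invariant coincides with the one defined here via a parametrix and $\deg_{LS}$ (guaranteed by the axiomatic characterization of the parity in \cite{lopez:sampedro:22}). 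The remaining subtlety is the sharp localization $\lambda^\ast\in(\lambda_-,\lambda_+)$ together with the exclusion $\lambda_\ast\notin[\lambda_-,\lambda_+]$ in alternative (b); this rests on the additivity/multiplicativity of $\sigma$ along subintervals combined with a parity-balance identity for the bifurcation values sitting on a component confined to $U$, in the spirit of the vanishing bifurcation index in the Leray--Schauder treatment \cite[p.~205, Thm.~II.3.3]{kielhoefer:12} of the classical Rabinowitz alternative \cite{rabinowitz:71}.
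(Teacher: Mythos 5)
Your overall strategy---quote an abstract parity-based global alternative from the literature and then translate its noncompactness clause into the dichotomy $(a_1)/(a_2)$---is the same strategy the paper follows, and your topological translation is correct: it is exactly the content of \lref{lemmaA}, since $\overline{\cC}$ is closed and hence compact iff it sits inside a product $X_0\tm\Lambda_0$ of compacta. Likewise, the localization $\lambda^\ast\in(\lambda_-,\lambda_+)$ and the exclusion $\lambda_\ast\notin[\lambda_-,\lambda_+]$ in (b) come packaged with the abstract theorem; no separate ``parity-balance identity'' is needed at this stage (that device belongs to \tref{thmsign}).

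The genuine gap is in which abstract theorem you invoke. \tref{thmglobal} assumes \emph{no} properness of $G$, yet the machinery you lean on---the Fitzpatrick--Pejsachowicz--Rabier degree and its axiomatization by L{\'o}pez-G{\'o}mez and Sampedro, together with the global theorems of \cite{fitzpatrick:etal:92,pejsachowicz:rabier:98}---is built for Fredholm maps that are proper on closed bounded subsets; that extra hypothesis is precisely what distinguishes \tref{thmglobal2} from \tref{thmglobal} in this paper. Your fallback, ``Whyburn's lemma applied within $\cS$'', does not close this hole: the Whyburn/separation mechanism of the Rabinowitz argument requires compactness of bounded closed pieces of the solution set near the putative compact component, which without properness is not available (Smale's local properness of Fredholm maps only yields local compactness, and even that uses smoothness in $(x,\lambda)$ that $(M_2)$ does not provide). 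The paper avoids this by working in the Benevieri--Furi orientation framework: simple connectedness of $U$ makes $G$ orientable, \cite[Prop.~5.6]{benevieri:furi:00} identifies $\sigma(T,[\lambda_-,\lambda_+])$ with the product of the signs $\sgn T(\lambda_\pm)$, and \cite[Thm.~3.6]{benevieri:furi:01} delivers the alternative (a)/(b) without any properness hypothesis; the remaining mismatch---that the parity construction nominally wants $C^1$ smoothness rather than $(M_2)$---is repaired via Pejsachowicz's approximation argument (\cite[Lemma~2.3.1]{pejsachowicz:11a}, \cite[Lemma~6.3]{Pej-Ski} with \cite[Thm.~8.73]{vaeth:12}), whereas your claim that \cite{lopez:sampedro:22,lopez:sampedro:24} already cover exactly $(M_1$--$M_3)$ without properness is unsubstantiated. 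To make your proposal sound, either cite the Benevieri--Furi result (or another properness-free global alternative) or supply the nontrivial topological/degree argument yourself.
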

\begin{proof}
	Since the domain $U$ is assumed to be simply connected, $G:U\to Y$ is orientable. In \cite[Prop.~5.6]{benevieri:furi:00} it is shown that $\sigma(T,[\lambda_-,\lambda_+])=\sgn T(\lambda_-)\sgn T(\lambda_+)$ holds, where also the notion of a sign for Fredholm operators $T(\lambda)$ is introduced. Hence, the signs $\sgn T(\lambda_-)$ and $\sgn T(\lambda_+)$ are different and \cite[Thm.~3.6]{benevieri:furi:01} implies the alternatives (a) or~(b). In particular, (a) is to be understood that $(a_1)$ or $(a_2)$ hold. Because $\overline{\cC}$ is closed, by to \lref{lemmaA}, $\overline{\cC}$ being compact would mean that this set is contained in the product of two compact subsets $X_0\subseteq X$, $\Lambda_0\subseteq\R$. However, the contraposition to this statement is simply $(a_2)$. Finally, the continuous differentiability of $G$ required to construct the parity $\sigma$ can be weakened to our assumption $(M_2)$ using methods due to Pejsachowicz (see \cite[Lemma~2.3.1]{pejsachowicz:11a} or \cite[Lemma~6.3]{Pej-Ski} together with \cite[Thm.~8.73]{vaeth:12}).
\end{proof}

\begin{theorem}[abstract global bifurcation for proper mappings $G$]
	\label{thmglobal2}
	Let $(M_1$--$M_3)$ hold and $\lambda_-,\lambda_+\in\Lambda$ with $\lambda_-<\lambda_+$. 
	If $\sigma(T,[\lambda_-,\lambda_+])=-1$ and $G:U\to Y$ is proper on closed and bounded subsets of $U$, then the alternative $(a_2)$ in \tref{thmglobal} simplifies to
	\begin{itemize}
		\item[$(a_2')$] $\cC$ is unbounded. 
	\end{itemize}
	In particular, for $U=X\tm\R$ and $\Lambda=\R$ the alternative $(a)$ reduces to $(a_2')$. 
\end{theorem}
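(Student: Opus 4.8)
The plan is to deduce \tref{thmglobal2} from the already-established \tref{thmglobal} by showing that the additional properness hypothesis collapses the sub‑alternative $(a_2)$ into the sharper conclusion $(a_2')$. Since $(M_1$--$M_3)$ and $\sigma(T,[\lambda_-,\lambda_+])=-1$ are still assumed, \tref{thmglobal} applies verbatim and produces a bifurcation value $\lambda^\ast\in(\lambda_-,\lambda_+)$ together with a component $\cC$ of $\cS$ through $(0,\lambda^\ast)$ for which alternative $(a)$ — that is, $(a_1)$ or $(a_2)$ — or alternative $(b)$ holds. It therefore suffices to prove the implication: if $(a)$ holds and $(a_1)$ fails, then $\cC$ is unbounded. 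I would argue this by contradiction, assuming simultaneously that $\cC$ is noncompact in $U$, that $\overline{\cC}\cap\partial U=\emptyset$ (closure taken in $X\tm\R$), and that $\cC$ is bounded, and then contradicting $(a_2)$.

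The first step is to localise the closure inside $U$. Since $\cS$, hence each of its components $\cC$, is closed in $U$, any point of $\overline{\cC}\setminus\cC$ belongs to $\overline{U}\setminus U=\partial U$; combined with $\overline{\cC}\cap\partial U=\emptyset$ this forces $\overline{\cC}\subseteq\cC\subseteq U$, i.e.\ $\overline{\cC}=\cC$. Now invoke boundedness: $\overline{\cC}$ is a bounded subset of $U$ that is closed in $X\tm\R$, so the properness hypothesis on $G$ is legitimately applicable to it. The key observation is that $G$ vanishes identically on $\overline{\cC}$: on the nontrivial part of $\cC$ this is the defining property of $\cS$, on the critical trivial part $\set{0}\tm\fC$ it is $(M_1)$, and continuity of $G$ carries this over to the closure. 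Hence $\overline{\cC}=(G|_{\overline{\cC}})^{-1}(\set{0})$, and since $\set{0}$ is compact in $Y$ and $G$ is proper on the closed, bounded set $\overline{\cC}\subseteq U$ (apply \lref{lemcharprop} with the compact target $\set{0}$), the set $\overline{\cC}$ is compact. This contradicts $(a_2)$; since we assumed $(a_1)$ fails, it contradicts $(a)$. Consequently, under the hypotheses, $(a)$ is equivalent to "$(a_1)$ or $(a_2')$" — the reverse direction being trivial, as $(a_2')$ implies $(a_2)$ and $(a_1)$ is part of $(a)$ — which is exactly the claimed simplification. The final sentence is then immediate: for $U=X\tm\R$ one has $\partial U=\emptyset$, so $(a_1)$ is vacuous and only $(a_2')$ remains.

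I do not expect a genuine obstacle; the argument is a short piece of topological bookkeeping once the mechanism is identified. The only point requiring care — and the one I would spell out carefully — is the interplay between closures in $U$ and in $X\tm\R$: one must check that excluding $(a_1)$ really does push $\overline{\cC}$ back inside $U$, so that $G$ is defined and continuous (hence zero) on all of $\overline{\cC}$ and the properness hypothesis, which concerns closed bounded subsets of $U$, genuinely applies to $\overline{\cC}$. After that the contradiction with $(a_2)$ is one line.
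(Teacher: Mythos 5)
Your argument is correct, but it is a genuinely different route from the paper's. The paper proves \tref{thmglobal2} by tracing it back to the literature: it identifies the statement with \cite[Thm.~7.2]{fitzpatrick:etal:92} resp.\ \cite[Thm.~6.1]{pejsachowicz:rabier:98} and \cite[Thm.~6.1]{galdi:rabier:99}, and only comments on how the $C^2$/$C^1$ smoothness hypotheses there can be relaxed to $(M_2)$ via the references already used for \tref{thmglobal}. You instead deduce the refinement directly from \tref{thmglobal} itself: excluding $(a_1)$ forces $\overline{\cC}=\cC\subseteq U$ (using that components of $\cS$ are closed in $U$ and $\overline{U}\setminus U=\partial U$ for open $U$), so a bounded $\cC$ is a closed, bounded subset of $U$ on which $G$ vanishes identically, whence properness applied to the compact target $\set{0}$ makes $\cC=(G|_{\cC})^{-1}(\set{0})$ compact and contradicts $(a_2)$; the case $U=X\tm\R$ follows since $\partial U=\emptyset$. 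This is sound (your citation of \lref{lemcharprop} is unnecessary --- the definition of properness restricted to $\overline{\cC}$ suffices --- and the continuity remark is redundant once $\overline{\cC}=\cC$, but neither affects validity), and your care in first establishing $\overline{\cC}=\cC$ before invoking the properness hypothesis is exactly the point that needs attention. What the two approaches buy: yours is short, self-contained, and makes transparent that properness is the \emph{only} extra ingredient needed on top of \tref{thmglobal}, in fact showing that bounded components are automatically compact; the paper's route keeps the statement anchored in the established degree-theoretic framework of Fitzpatrick--Pejsachowicz--Rabier and Galdi--Rabier, which is also where the smoothness-weakening machinery is documented, at the cost of being a proof by citation rather than by argument.
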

\begin{proof}
	\tref{thmglobal2} traces \cite[Thm.~7.2]{fitzpatrick:etal:92}, where $G:U\to Y$ is assumed to be of class $C^2$ (however, see \cite[Rem.~7.2]{fitzpatrick:etal:92}). This deficit is avoided in \cite[Thm.~6.1]{pejsachowicz:rabier:98} requiring merely $C^1$-mappings $G$. Given this, \tref{thmglobal2} coincides with \cite[Thm.~6.1]{galdi:rabier:99}, whose smoothness assumptions can be weakened to our above setting based on references given in the proof of \tref{thmglobal}. 
\end{proof}

While it is clear that statement $(a_2')$ is nonlocal, indeed both the alternatives $(a)$ and $(b)$ are not of local nature. In particular, we argue that the component $\cC$ cannot be contained in arbitrarily small neighborhoods of $(0,\lambda^\ast)$ in $X\tm\R$: 
\begin{itemize}
	\item[ad (a)] Fredholm mappings are locally proper (cf.~\cite[(1.6)~Thm.]{smale:65}). Consequently, there exists a closed neighborhood $U\subseteq X\tm\R$ of $(0,\lambda^\ast)$ on which the restriction $G|_U$ is proper. Then the component $\cC$ is not contained in $U$ because otherwise it would be compact (due to $\cC\subseteq G^{-1}(0)$). 
 
    \item[ad (b)] On the one hand also $(0,\lambda_\ast)$ is a bifurcation point of \eqref{abs} and, in turn, $D_1G(0,\lambda_\ast)$ is singular due to the Implicit Function Theorem \cite[p.~7, Thm.~I.1.1]{kielhoefer:12}. On the other hand, as emphasized in \cite[p.~295]{galdi:rabier:99}, the possible point $\lambda_\ast$ closest to $\lambda^\ast$ is at positive distance from $\lambda^\ast$. Thus, because $\cC$ contains both $(0,\lambda^\ast)$ and $(0,\lambda_\ast)$, it cannot be confined to a neighborhood leaving out the closest point $(0,\lambda)$ such that $D_1G(0,\lambda)$ is singular.
\end{itemize}

For the sake of a refinement of alternative (b) in \tref{thmglobal}, we remind the reader of the open intervals $J_i$, $i\in\I$, satisfying $(I_1$--$I_3)$, and the compact intervals $\bar J_i$, $i\in\I'$, introduced in Sect.~\ref{sec5}. In particular, the family $J:=\set{\bar J_i}_{i\in\I'}$ was said to \emph{cover} the set $\fC$ of critical values abstractly defined in \eqref{nocv}. 
\begin{SCfigure}[2]
	\includegraphics[scale=0.5]{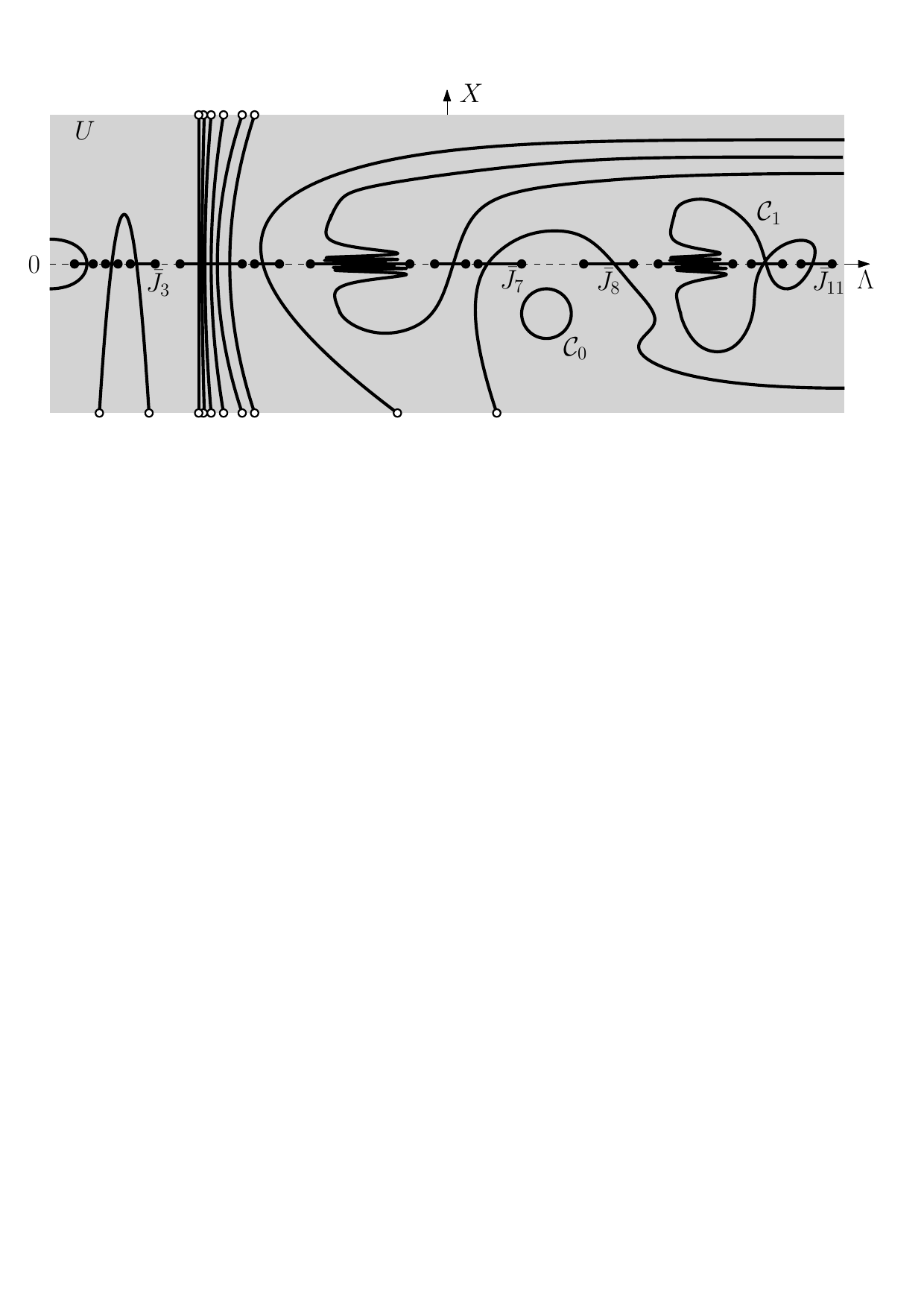}
	\caption{The set $\cS_J$ for $\Lambda=\R$ with $U$ grey-shaded. For the bounded component $\cC_0$ and $\cC_1$ of $\cS$ it is
	\newline $I_J(\cC_0)=0$ and
	\newline
	$I_J(\cC_1)=\set{9,10,11}$}
	\label{fig4}
\end{SCfigure}

On this basis, the superset of $\cS$ given by (cf.~Fig.~\ref{fig4}), 
$$
	\cS_J
	:=
	\set{(x,\lambda)\in U:\,G(x,\lambda)=0,\,x\neq 0}
	\cup
	\intoo{
	\set{0}\tm\bigcup_{i\in\I'}\bar J_i}
$$
is closed in $U$. The compact sets $\set{0}\tm\bar J_i$ connect  several components of $\cS$ bifurcating at bifurcation values in $\bar J_i$ to a single component of $\cS_J$ (cf.~Fig.~\ref{fig2} and~\ref{fig4}). 

Given a component $\cC$ of $\cS$, we introduce the set 
$$
	I_J(\cC):=\set{i\in\I':\,\cC\cap(\set{0}\tm \bar J_i)\neq\emptyset}
$$
of all indices $i$ for which solutions bifurcate from some $\set{0}\tm\bar J_i$ (cf.~Fig.~\ref{fig4}). Note that $I_J(\cC)$ is finite for compact $\cC$. For each $i\in\I$ we choose a $\lambda_i\in J_i$ and recursively define a sequence $(a_i)_{i\in\I}$ in $\set{-1,1}$ as follows: Fix $i_0\in\I$, either $a_{i_0}:=1$ or $a_{i_0}:=-1$, and set
\begin{align*}
	a_{i+1}&:=a_i\sigma(T,[\lambda_i,\lambda_{i+1}])\fall i\geq i_0,\,i\in\I',\\
	a_{i-1}&:=a_i\sigma(T,[\lambda_{i-1},\lambda_i])\fall i\leq i_0,\,i-1\in\I.
\end{align*}
Every $a_i$ indicates the orientation of $T(\lambda)\in GL(X,Y)$ in the interval $J_i$ (cf.~\cite{lopez:sampedro:22}). This allows us to introduce the $J$-\emph{parity map}
\begin{align*}
	\pi_J:\I'&\to\set{-1,0,1},&
	\pi_J(i)&:=\tfrac{a_{i+1}-a_i}{2},
\end{align*}
having the properties
\begin{align*}
	\pi_J(i)&=0\Leftrightarrow a_{i+1}=a_i\text{ i.e.\ $T$ has the same orientation on $J_i$ and $J_{i+1}$},\\
	\pi_J(i)&\neq 0\Leftrightarrow a_{i+1}\neq a_i\text{ i.e.\ $T$ has different orientation on $J_i$ and $J_{i+1}$}.
\end{align*}
It therefore measures the number of consecutive orientation changes of $T(\lambda)$ as the parameter $\lambda$ increases through the interval $\Lambda$ respecting the covering family $J$. 
\begin{theorem}[bounded components]
	\label{thmsign}
	Let $(M_1$--$M_3)$ hold, $G:U\to Y$ be proper on closed, bounded subsets of $U$ and suppose the family $J$ covers $\fC$. If $\cC\neq\emptyset$ is a bounded component of $\cS_J$, then $\cC$ is compact in $U$ and
	\begin{equation}
		\sum_{i\in I_J(\cC)}\pi_J(i)=0. 
		\label{thmsign1}
	\end{equation}
\end{theorem}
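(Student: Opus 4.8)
The plan is to derive this as the abstract analogue of Rabinowitz's bounded-continuum alternative (cf.~\cite[p.~205, Thm.~II.3.3]{kielhoefer:12}), replacing the Leray--Schauder degree by the degree for proper $C^1$ Fredholm maps of index $0$, and then to quote the detailed bookkeeping from \cite{lopez:mora:04} (cf.\ also \cite{fitzpatrick:etal:92,galdi:rabier:99}). Since $U$ is simply connected, $G$ is orientable (as in the proof of \tref{thmglobal}), so a $\Z$-valued degree $\deg\bigl(G(\cdot,\lambda),\mathcal O,0\bigr)$ is available for bounded open $\mathcal O\subseteq X$ with $0\notin G(\partial\mathcal O,\lambda)$, enjoying excision and generalized homotopy invariance; the regularity $(M_2)$ suffices by the reduction quoted in the proof of \tref{thmglobal2}. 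For $\lambda\in J_i$ one has $T(\lambda)\in GL(X,Y)$, whence $\deg\bigl(G(\cdot,\lambda),B_r(0),0\bigr)=a_i$ for a small ball, where $a_i\in\set{-1,1}$ is the orientation number of App.~\ref{appC} with $a_{i+1}=a_i\sigma(T,[\lambda_i,\lambda_{i+1}])$; thus $\pi_J(i)=\tfrac12(a_{i+1}-a_i)$ is half the jump of the local index of the trivial solution as $\lambda$ crosses the compact interval $\bar J_i$.

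Next I would settle compactness: $\cS_J\subseteq G^{-1}(0)$ is closed in $U$, and a standard argument (properness on closed bounded subsets of $U$ together with Whyburn's separation lemma, exactly as in \cite{lopez:mora:04}) shows that the closure $\overline{\cC}$ in $X\tm\R$ of a bounded component $\cC$ cannot meet $\partial U$; hence $\overline{\cC}=\cC$ is a closed bounded subset of $U$ with $(G|_{\overline\cC})^{-1}(0)=\overline\cC$, so $\cC$ is compact by properness.

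Assuming $\cC$ compact, I would use Whyburn's lemma once more, in the compact set $\cS_J\cap\overline B$ with $\overline B$ a large closed ball, to fix a bounded open $W\subseteq X\tm\R$ with $\overline W\subseteq U$, $\cC\subseteq W$, $\partial W\cap\cS_J=\emptyset$, $\overline W\cap\cT_k=\emptyset$ for all $k\notin I_J(\cC)$, and such that $W$ contains a neighborhood of $\set0\tm\widetilde J_k$ for each $k\in I_J(\cC)$, where $\widetilde J_k\supseteq\bar J_k$ is enlarged into the adjacent open intervals $J_k,J_{k+1}$ by less than their lengths. Writing $W_\lambda:=\set{x\in X:(x,\lambda)\in W}$, I would then track the ``nontrivial'' degree $D_m:=\deg\bigl(G(\cdot,\lambda_m),W_{\lambda_m}\setminus\overline{B_r(0)},0\bigr)$, with $r$ small and $\lambda_m\in J_m$ chosen off the finitely many $\lambda$-values where $\set0\tm\R$ meets $\partial W$. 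This integer is well-defined and, since $J_m\cap\fB=\emptyset$ and $\cC$ never meets $\partial W$, constant over $J_m$; it vanishes once $W$ has no content over $\lambda_m$. The only way $D_m$ can change across $\bar J_m$ is by a nontrivial arc of $\cC$ crossing $\partial B_r(0)$, i.e.\ bifurcating from or into the trivial branch over $\bar J_m$ --- which happens exactly when $\cC\cap\cT_k\neq\emptyset$, that is $m\in I_J(\cC)$; in that case the jump equals minus the jump of the trivial-solution index, $D_{m+1}-D_m=-(a_{m+1}-a_m)=-2\pi_J(m)$, while $D_{m+1}-D_m=0$ otherwise. Telescoping over a finite range of indices covering the $\lambda$-span of $\cC$ (finite by $(I_1$--$I_3)$, with vanishing endpoint degrees) yields $0=\sum_m(D_{m+1}-D_m)=-2\sum_{m\in I_J(\cC)}\pi_J(m)$, which is \eqref{thmsign1}.

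The hard part will be the degree-theoretic bookkeeping of the last paragraph: making $\deg(G(\cdot,\lambda),W_\lambda,0)$ genuinely well-defined on the slices in the presence of the omnipresent trivial solutions on $\partial W$ (this is exactly why the enlargements $\widetilde J_k$ and the points $\lambda_m$ must be placed with care), performing the excision separating the trivial solution from the arcs of $\cC$, and matching each degree jump with the corresponding $\pi_J(m)$ over precisely the index set $I_J(\cC)$. All of this is carried out in \cite{lopez:mora:04} (and, in the language of the Fredholm degree, in \cite{fitzpatrick:etal:92}), so the proof proper reduces to checking that $(M_1$--$M_3)$, properness of $G$ on closed bounded subsets of $U$, orientability, and the identity $\sigma(T,[\lambda_i,\lambda_{i+1}])=a_ia_{i+1}$ place \eqref{abs} in the scope of those results.
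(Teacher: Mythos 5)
Your proposal is correct in outline, but it follows a different route than the paper. The paper's own proof is two lines: since the component $\cC$ is closed in $U$ (being a component of the relatively closed set $\cS_J$) and bounded, properness of $G$ on closed bounded subsets of $U$ makes $\cC=(G|_{\cC})^{-1}(0)$ compact at once -- no Whyburn argument or exclusion of $\partial U$ is needed for this step, which is simpler than the detour you sketch -- and the identity \eqref{thmsign1} is then quoted directly from \cite[Thm.~5.9]{lopez:sampedro:24}, where the statement is proved for globally defined maps $G$. What you propose instead is essentially a reconstruction of the proof of that cited theorem: the degree for proper Fredholm maps of index $0$ built from the orientation of \tref{thmglobal}, an isolating bounded open set $W$ around $\cC$ obtained from Whyburn's separation lemma, slice degrees $D_m$ over the noncritical intervals $J_m$, and a telescoping of the jumps $D_{m+1}-D_m=-2\pi_J(m)$. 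This buys self-containedness and makes visible why the $a_i$'s and the parity $\sigma(T,[\lambda_i,\lambda_{i+1}])=a_ia_{i+1}$ enter, whereas the paper's citation hides all of that; the price is that the genuinely delicate points are only deferred, not settled: the FPR degree is only invariant under homotopy up to the parity of the path of linearizations (which is precisely what the recursion for $a_i$ must absorb), the set of $\lambda$ with $(0,\lambda)\in\partial W$ need not be finite unless $W$ is built with that in mind, and one must check that the only pieces of $\cS_J$ inside $W$ that touch the trivial segments $\cT_k$ belong to $\cC$ itself (which does follow, since $\cT_k\subseteq\cC$ whenever $\cC\cap\cT_k\neq\emptyset$ and bifurcation points lie in $\fC\subseteq\bigcup_{i}\bar J_i$). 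This bookkeeping is exactly what \cite{lopez:mora:04} carries out in the Leray--Schauder framework and what \cite{lopez:sampedro:24} transplants to the Fredholm setting, so your reduction to those references is legitimate, just heavier than the paper's shortcut.
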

An interpretation of the condition \eqref{thmsign1} is as follows: For each index $i\in I_J(\cC)$ with $\pi_J(i)=\pm 1$ there is another $j\in I_J(\cC)$ with $\pi_J(j)=\mp 1$. In this case, the continuum $\cC$ connects the sets $\set{0}\tm\bar J_i$ and $\set{0}\tm\bar J_j$. In particular, there always exists an even number of indices $i\in I_J(\cC)$ with $\pi_J(i)\neq 0$. 

Furthermore, it is easy to see that the left-hand side in \eqref{thmsign1a} plays the role of the \emph{bifurcation index} used in the specifications \cite[p.~342, (1.9) and p.~344, (1.10)]{granas:dugundji:03} of the classical alternative from \cite{rabinowitz:71} or \cite[p.~205, Thm.~II.3.3]{kielhoefer:12} applying the the situation, where $T(\lambda)$ is a compact perturbation of the identity. 
\begin{proof}
	Because $G$ is proper on closed, bounded subsets, any bounded component $\cC\subset U$ is actually compact. Given this, the remaining argument follows \cite[Thm.~5.9]{lopez:sampedro:24}, where \tref{thmsign} is formulated for globally defined mappings $G$.
\end{proof}

\begin{corollary}
	\label{corsign}
	Let $U=X\tm\R$, $\Lambda=\R$, assume $\fC$ is a discrete set and let $\lambda_-,\lambda_+\in\R$ with $\lambda_-<\lambda_+$ and $\sigma(T,[\lambda_-,\lambda_+])=-1$. If $\cC$ denotes the continuum emanating from $\set{0}\tm(\lambda_-,\lambda_+)$ and one of the conditions
	\begin{itemize}
		\item[(i)] $\cC\cap(\set{0}\tm\fC)$ is infinite, 

		\item[(ii)] $\sum_{i\in I_J(\cC)}\pi_J(i)\neq 0$
	\end{itemize}
	holds, then $\cC$ is unbounded. 
\end{corollary}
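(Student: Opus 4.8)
The plan is to derive the corollary from the abstract global bifurcation results \tref{thmglobal2} and \tref{thmsign}, exploiting that a discrete set of critical values admits a covering family whose intervals are singletons. First I would recall that, since the standing hypotheses $(M_1$--$M_3)$ hold, $G$ is proper on closed and bounded subsets of $U=X\tm\R$, and $\sigma(T,[\lambda_-,\lambda_+])=-1$, the abstract \tref{thmglobal2} applies: it furnishes a bifurcation value $\lambda^\ast\in(\lambda_-,\lambda_+)$ and the component $\cC$ of $\cS$ through $(0,\lambda^\ast)$, and because $U=X\tm\R$ and $\Lambda=\R$ the accompanying dichotomy is simply that $\cC$ is either unbounded --- in which case there is nothing left to prove --- or it is not. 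It therefore suffices to show that a bounded $\cC$ is incompatible with either (i) or (ii).

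Assume then that $\cC$ is bounded. Writing $\fC=\set{\mu_i}_{i\in\I}$ as an increasing enumeration of $\fC\subseteq\R$ --- which is nonempty (it contains $\lambda^\ast$) and, being closed in $\R$ and discrete, locally finite --- I would choose open intervals $J_i\subseteq\R\setminus\fC$ with $\sup J_i=\mu_i=\inf J_{i+1}$ for consecutive indices. Then $(I_1$--$I_3)$ are satisfied and each compact interval $\bar J_i=[\sup J_i,\inf J_{i+1}]=\set{\mu_i}$ is a singleton, so the family $J=\set{\bar J_i}$ covers $\fC$ and, since $\set{0}\tm\bigcup_i\bar J_i=\set{0}\tm\fC$, one has $\cS_J=\cS$. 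In particular $\cC$ is a bounded component of $\cS_J$; as $G$ is proper on closed and bounded subsets and $\cC\subseteq G^{-1}(0)$ is closed in $U=X\tm\R$, the set $\cC$ is actually compact, hence $I_J(\cC)=\set{i\in\I:(0,\mu_i)\in\cC}$ is finite and \tref{thmsign} gives $\sum_{i\in I_J(\cC)}\pi_J(i)=0$. This directly contradicts (ii). And if (i) held, i.e.\ $\cC\cap(\set{0}\tm\fC)$ were infinite, then --- the $\bar J_i=\set{\mu_i}$ being pairwise disjoint --- assigning the index $i$ to each point $(0,\mu_i)\in\cC$ would be injective, forcing $I_J(\cC)$ to be infinite and contradicting its finiteness. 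Thus $\cC$ cannot be bounded, which proves the claim.

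I do not expect a genuine obstacle, the statement being essentially a specialization of \tref{thmsign}. The only point that needs a little care is the reduction to a singleton covering family: one must observe that for discrete $\fC$ such a family exists and makes $\cS_J$ coincide with $\cS$, so that the component $\cC$ of $\cS$ produced by \tref{thmglobal2} is literally a component of $\cS_J$ to which \tref{thmsign} may be applied; and one should note that the truth of hypothesis (ii), which is a nonvanishing statement, does not depend on which such covering family is chosen, because the orientations $a_i$ are governed by the (locally constant) sign of $T$ on the gaps of $\R\setminus\fC$. The remaining ingredient --- that a bounded component of $\cS_J$ is compact via properness --- is precisely the opening step of the proof of \tref{thmsign}.
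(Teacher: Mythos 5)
Your proposal is correct and follows essentially the same route as the paper: a bounded $\cC$ is excluded by applying \tref{thmsign} (via a covering family whose intervals $\bar J_i$ are the singletons of the discrete set $\fC$, so that $\cS_J=\cS$), which forces $\sum_{i\in I_J(\cC)}\pi_J(i)=0$ against (ii), while (i) is incompatible with boundedness because of the discreteness of $\fC$. The only cosmetic difference is in case (i), where the paper concludes unboundedness directly from discreteness (an infinite subset of a closed discrete set is unbounded) rather than via compactness of $\cC$ and finiteness of $I_J(\cC)$ as you do; both arguments are sound.
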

\begin{proof}
	In case the intersection $\cC\cap(\set{0}\tm\fC)$ is an infinite set, then the discreteness of $\fC$ yields that the continuum $\cC$ is unbounded. 
	If $\cC\cap(\set{0}\tm\fC)$ is finite and bounded, then \eqref{thmsign1} holds and contradicts the assumption (ii). 
\end{proof}
\bibliographystyle{amsplain}

\begin{thebibliography}{99}
	\bibitem{anagnostopoulou:poetzsche:rasmussen:22} V.~Anagnostopoulou, C.~P{\"o}tzsche, M.~Rasmussen. 
	\newblock{\em Nonautonomous Bifurcation Theory: Concepts and Tools}, 
	Frontiers in Applied Dynamical Systems: Reviews and Tutorials 10. Springer, Cham, 2023.

	\bibitem{arnold:98} L.~Arnold.
	\newblock {\em Random Dynamical Systems},
	Monographs in Mathematics. Springer, Berlin etc., 1998.

    \bibitem{paper:ZA1} Z.~Artstein. 
    \newblock\emph{Topological dynamics of an ordinary differential equation}, 
    J.\ Differential Equations 23 (1977) 216--223.
 
	\bibitem{benevieri:furi:01} P.~Benevieri, M.~Furi. 
	\newblock\emph{Bifurcation results for families of Fredholm maps of index zero between Banach spaces}, 
	Nonlinear Anal.\ Forum 6(1) (2001) 35--47.

	\bibitem{benevieri:furi:00} P.~Benevieri, M.~Furi.
	\newblock\emph{On the concept of orientability for Fredholm maps between real Banach manifolds},
	Topol.\ Methods Nonlinear Anal.~16 (2000) 179--306.

	\bibitem{bressan:piccoli:07} A.~Bressan, B.~Piccoli.
	\newblock {\em Introduction to the Mathematical Theory of Control},
	Series on Applied Mathe\-matics 2, American Institute of Mathematical Sciences, Springfield, 2007.
	
	\bibitem{coppel:78} W.~Coppel.
	\newblock {\em Dichotomies in Stability Theory},
	Lect.\ Notes Math.~629, Springer, Berlin etc., 1978.

    \bibitem{eng:89} R.~Engelking.
    \newblock\emph{General Topology} (revised and completed ed.), Sigma Series in Pure Mathematics 6, Heldermann, Berlin, 1989.

	\bibitem{eveson:00} S.\ Eveson. 
	\newblock\emph{Compactness criteria and measures of noncompactness in function spaces}, 
	J.\ London Math.\ Soc.~(2), 62(1) (2000) 263--277. 

	\bibitem{fitzPeja86} P.M.~Fitzpatrick, J.~Pejsachowicz.
	\newblock \emph{An extension of the Leray--Schauder degree for fully nonlinear elliptic problems},
	in \emph{Nonlinear functional analysis and its applications}, Proc.\ Summer Res.\ Inst., Berkeley/Calif. 1983, Proc.\ Symp.\ Pure Math.~45/1 (1986), pp.~425--438.

	\bibitem{FiPejsachowiczV} P.M.~Fitzpatrick, J.~Pejsachowicz.
	\newblock \emph{Parity and generalized multiplicity},
	Trans.\ Amer.\ Math.\ Soc.~326 (1991) 281--305.

	\bibitem{FitPejRab} P.M.~Fitzpatrick, J.~Pejsachowicz, P.J~Rabier.
	\newblock \emph{Orientability of Fredholm families and topological degree for orientable nonlinear Fredholm mappings},
	J.\ Funct.\ Anal.~124(1) (1994) 1--39.

	\bibitem{fitzpatrick:etal:92} P.M.~Fitzpatrick, J.~Pejsachowicz, P.J~Rabier.
	\newblock \emph{The degree of proper $C^2$-Fredholm mappings I.},
	J.\ reine angew.\ Math.~427 (1992) 1--33. 

	\bibitem{galdi:rabier:99} G.\ Galdi, P.~Rabier. 
	\newblock \emph{Functional properties of the Navier-Stokes operator and bifurcation of stationary solutions: Planar exterior domains}, 
	in \emph{Topics in Nonlinear Analysis: The Herbert Amann Anniversary Volume} (eds.\ J.\ Escher, G.\ Simonett), Progress in Nonlinear Differential Equations and Their Applications~35, Birkh\"auser, Basel (1999), pp.~273--303.

	\bibitem{granas:dugundji:03} A.~Granas, J.~Dugundji. 
	\newblock {\em Fixed Point Theory}, Monographs in Mathematics. Springer, Berlin etc., 2003.

	\bibitem{kapitula:promislow:13} T.~Kapitula, K.~Promislow.
	\newblock \emph{Spectral and Dynamical Stability of Nonlinear Waves}, 
	Applied Mathematical Sciences 185, New York, NY, 2013.

	\bibitem{kielhoefer:12} H.~Kielh{\"o}fer.
	\newblock {\em Bifurcation Theory: An Introduction with Applications to PDEs} ($2$nd edition),
	\newblock Applied Mathematical Sciences 156. Springer, New York etc., 2012.

	\bibitem{kurzweil:86} J.\ Kurzweil.
	\emph{Ordinary Differential Equations}, 
	Studies in Applied Mathematics 13, Elsevier, Amsterdam etc., 1986.

    \bibitem{kuznetsov:04} Y.~Kuznetsov. 
    \newblock {\em Elements of Applied Bifurcation Theory} ($3$rd edition), 
    Applied Mathematical Sciences 112, Springer, Berlin etc., 2004.

	\bibitem{latushkin:pogan:15} Y.~Latushkin, A.~Pogan. 
	\newblock\emph{The infinite dimensional Evans function}, 
	J.\ Funct.\ Anal.~268(6) (2015) 1509--1586.

    \bibitem{leoni:09} G.~Leoni. 
    \newblock\emph{A First Course in Sobolev spaces}, 
	Graduate Studies in Mathematics 105, AMS, Providence RI, 2009.

	\bibitem{thesis:Longo} I.P.~Longo.
	\newblock \emph{Topologies of continuity for Carath{\'e}odory differential equations with applications in non-autonomous dynamics},
	PhD Thesis, Universidad de Valladolid, 2018.

    \bibitem{paper:LNO2} I.P.~Longo, S.~Novo, R.~Obaya. 
    \newblock\emph{Weak topologies for Carath\'eodory differential equations: continuous dependence, exponential dichotomy and attractors}, 
    J.\ Dyn.\ Diff.\ Equat.~31 (2018) 1617--1651.

	\bibitem{lopez:mora:04} J.\ L{\'o}pez-G{\'o}mez, C.~Mora-Corral. 
	\newblock\emph{Minimal complexity of semi-bounded components in bifurcation theory}, 
    Nonlinear Anal.\ (TMA)~58(7--8) (2004) 749--777. 
    
	\bibitem{lopez:sampedro:22} J.~L{\'o}pez-G{\'o}mez. J.C.\ Sampedro. 
    \newblock\emph{Axiomatization of the degree of Fitzpatrick, Pejsachowicz and Rabier}. 
    J.\ Fixed Point Theory Appl.~24(1) (2022) Paper No.~8, 28 p.
 
	\bibitem{lopez:sampedro:24} J.\ L{\'o}pez-G{\'o}mez, J.C.\ Sampedro.
	\newblock\emph{Bifurcation theory for Fredholm operators}, 
	J.\ Differential Equations 404 (2024) 182--250.

    \bibitem{palmer:84} K.J.~Palmer.
	\newblock \emph{Exponential dichotomies and transversal homoclinic points},
	J.\ Differential Equations 55 (1984) 225--256.

	\bibitem{palmer:88} K.J.~Palmer.
	\newblock\emph{Exponential dichotomies and Fredholm operators}, 
	Proc.\ Am.\ Math.\ Soc.~104(1) (1988) 149--156.

	\bibitem{pejsachowicz:11a} J.~Pejsachowicz.
	\newblock \emph{Bifurcation of Fredholm maps I.\ The index bundle and bifurcation},
	Topol.\ Methods Nonlinear Anal.~38 (2011) 115--168. 

	\bibitem{pejsachowicz:rabier:98} J.\ Pejsachowicz, P.J.~Rabier.
	\newblock \emph{Degree theory for $C^1$-Fredholm mappings of index $0$},
	J.\ Anal.\ Math.~76 (1998) 289--319.

	\bibitem{Pej-Ski} J.\ Pejsachowicz, R.\ Skiba.
	\newblock \emph{Global bifurcation of homoclinic trajectories of discrete dynamical systems},
	Cent.\ Eur.\ J.\ Math.~10 (2012) 2088--2109.

	\bibitem{poetzsche:12} C.~P{\"o}tzsche.
	\newblock\emph{Nonautonomous bifurcation of bounded solutions: Crossing curve situations}, 
	Stoch.\ Dyn.\ 12(2) (2012) 1150017.

    \bibitem{poetzsche:skiba:20b} C.~P{\"o}tzsche, R.~Skiba. 
    \newblock\emph{ A continuation principle for Fredholm maps II: Application to homoclinic solutions}, 
    Math.\ Nachr.~293(6) (2020) 1174--1199.

	\bibitem{poetzsche:skiba:24} C.~P{\"o}tzsche, R.~Skiba. 
	\newblock\emph{Evans function, parity and nonautonomous bifurcation}, manuscript, 2024. 

    \bibitem{rabier:04} P.~Rabier.
    \newblock\emph{Ascoli's theorem for functions vanishing at infinity and selected applications}, 
    J.\ Math.\ Anal.\ Appl.~290 (2004) 171--189. 

    \bibitem{rabinowitz:71} P.~Rabinowitz.
    \newblock\emph{Some global results for nonlinear eigenvalue problems}, 
    J.\ Funct.\ Anal.~7 (1971) 487--513. 

	\bibitem{sandstede:02} B.~Sandstede.
	\newblock {\em Stability of travelling waves},
	in \emph{Handbook of Dynamical Systems 2} (ed.\ B.~Fiedler), Elsevier Science, Amsterdam (2002), pp.~983--1055.

    \bibitem{sell:71} G.~Sell.
    \newblock{\em Topological Dynamics and Ordinary Differential Equations}.
    Mathematical Studies 33. Van Nostrand Reinhold, London etc., 1971.

    \bibitem{smale:65} S.~Smale. 
    \newblock\emph{An infinite-dimensional version of Sard's theorem}, 
    Am.\ J.\ Math.~87(4) (1965) 861--866.

	\bibitem{vaeth:12} M.~V\"{a}th.
	\newblock \emph{Topological Analysis. From the Basics to the Triple Degree for Nonlinear Fredholm Inclusions},
	Ser.\ Nonlinear Anal.\ Appl.~16, de Gruyter, Berlin, 2012.

    \bibitem{zeidler:95} E.~Zeidler.
    \newblock{\em Applied Functional Analysis: Main Principles and Their Applications}, Applied Mathematical Sciences 109, Springer, Heidelberg, 1995. 
\end{thebibliography}

\end{document}